\newcommand{\hide}[1]{}
\newtheorem{theorem}{Theorem}
\newtheorem{problem}{Problem}
\newtheorem{prop}{Proposition}[section]
\newtheorem{lemma}{Lemma}[section]
\newtheorem{corollary}{Corollary}
\newtheorem{remark}{Remark}[section]
\numberwithin{equation}{section}
\newtheorem*{thm}{Theorem}
\newcommand{\SL}{\operatorname{SL}}
\newcommand{\PGL}{\operatorname{PGL}}
\newcommand{\GL}{\operatorname{GL}}
\newcommand{\sgn}{\operatorname{sgn}}
\newcommand{\supp}{\operatorname{supp}}
\newcommand{\Eis}{\operatorname{Eis}}
\newcommand{\R}{\mathbb{R}}
\newcommand{\C}{\mathbb{C}}
\newcommand{\Q}{\mathbb{Q}}
\newcommand{\Z}{\mathbb{Z}}
\newcommand{\N}{\mathbb{N}}
\newcommand{\I}{\mathcal{I}}
\newcommand{\B}{\mathcal{B}}
\newcommand{\D}{\mathcal{D}}
\renewcommand{\d}{\,\mathrm{d}}
\renewcommand{\b}{\mathrm{b}}
\newcommand{\X}{\mathbb{X}}
\newcommand{\Dh}{\dim_{\mathcal{H}}}
\newcommand{\Dl}{\dim^{\star}_{\ell^1}}
\newcommand{\dl}{\dim_{\ell^1}}
\title{On Fourier asymptotics and effective equidistribution}
\author{Shreyasi Datta}
\address{Indian Statistical Institute, 8th Mile Mysore Road, RVCE Post, Bangalore 560059, India}
\email{shreyasi@isibang.ac.in}
\author{Subhajit Jana}
\address{Indian Statistical Institute, 8th Mile Mysore Road, RVCE Post, Bangalore 560059, India}
\email{s.jana@isibang.ac.in}
\thanks{The first author was in part supported by the Knut and Alice Wallenberg Foundation and also by the EPSRC grant EP/Y016769/1.}
\begin{document}

\begin{abstract}
We prove effective equidistribution of expanding horocycles in $\mathrm{SL}_2(\mathbb{Z})\backslash\mathrm{SL}_2(\mathbb{R})$ with respect to various classes of Borel probability measures on $\mathbb{R}$ having certain Fourier asymptotics. Our proof involves new techniques combining tools from automorphic forms and harmonic analysis.

In particular, for any Borel probability measure $\mu$, satisfying $\sum_{\mathbb{Z}\ni|m|\leq X}|\widehat{\mu}(m)| = O\left(X^{1/2-\theta}\right)$ with $\theta>7/64,$ our result holds. This class of measures contains convolutions of $s$-Ahlfors regular measures for $s>39/64$, and as well as, a sub-class of self-similar measures. Moreover, our result is sharp upon the Ramanujan--Petersson Conjecture (upon which the above $\theta$ can be chosen arbitrarily small): there are measures $\mu$ with $\widehat\mu(\xi) = O\left(|\xi|^{-1/2+\epsilon}\right)$ for which equidistribution fails.
\end{abstract}

\maketitle

\section{Introduction}

\subsection{Equidistribution of horocycle flow}

Dynamics on $\X:=\SL_2(\Z)\backslash\SL_2(\R)$, the unit cotangent bundle of the modular surface, is a central topic lying in the interface of homogeneous dynamics and representation theory and has numerous applications in various number theoretic problems. We consider the homogeneous space $\X$ equipped with the $\SL_2(\R)$-invariant probability measure $m_{\X}$. Let $g_y:=\left(\begin{smallmatrix}\sqrt{y}&\\&\sqrt{y^{-1}}\end{smallmatrix}\right)$ for $y>0$ be the geodesic flow and the corresponding expanding horocycle flow is given by $n(x):=\bigl(\begin{smallmatrix}1&x\\0&1\end{smallmatrix}\bigr)$ for $x\in\R$. Let $\mu$ denote a Borel probability measure on $\R$. Let $\mu_y:=\int\delta_{n(x)g_y}\d\mu(x)$ denote the probability measure on $\X$ supported on the horocycle of length of $y^{-1}$, that is, for $\phi\in C_c^\infty(\X)$ we write
\begin{equation}\label{eq:def-mu-y}
\mu_y(\phi):=\int\phi(n(x)g_y)\d\mu(x).
\end{equation}
We record the following (folklore) problem which is currently mostly wide open.

\begin{problem}\label{prob1}
    Given a Borel probability measure $\mu$ on $\R$, determine whether $\mu_y\xrightarrow{\text{weak-}^\ast}m_\X$ as $y\to 0$ with a polynomial order convergence rate, that is, if there exists an $\eta>0$ such that
    $$\mu_y(\phi)=m_\X(\phi)+O_{\mu,\phi}(y^\eta),\quad \phi\in C_c^\infty(\X),$$
    as $y\to 0$ where $m_\X(\phi):=\int\phi\d m_\X$.
\end{problem}

Let $\widehat\mu$ denote the Fourier transform of $\mu$; see\eqref{hatmu}. The purpose of this paper is to initiate a study of Problem \ref{prob1}, depending on the asymptotic behavior of $\widehat\mu$. Loosely speaking, we answer Problem \ref{prob1} affirmatively for $\mu$ with $\widehat\mu$ having
\begin{enumerate}
    \item\label{item_avergae} a certain polynomial decay \emph{on average}, not necessarily having a pointwise decay,
    \item\label{item_pointwise} arbitrarily slow polynomial decay, but having \emph{precise oscillatory asymptotics}.
\end{enumerate}
Interestingly, both of the above types contain many interesting classes of measures, as evidenced below.  

\subsection{Average Fourier decay}
Our first main result is an affirmative solution of Problem \ref{prob1} when $\mu$ satisfies property Item \ref{item_avergae}. We recall the definition of the \emph{Fourier $\ell^1$-dimension} of $\mu$ (also, see \eqref{def:lp-dim-mu}) as
$$\dim_{\ell^1}\mu:=1-\inf\left\lbrace s\ge 0\mid \lim_{X\to\infty}X^{-s}\sum_{|m|\le X}|\widehat\mu(m)|=0\right\rbrace.$$
We refer to see \cite[\S 1.4.1]{yu2021rational} and \cite[\S 2]{CVY24} for more details about this quantity.

\begin{theorem}\label{thm:fractal_withoutbase}
     Let $\mu$ be a Borel probability measure on $\R$ such that $$\dl\mu>\tfrac{39}{64}=0.609375.$$ 
     Then there exists an $\eta>0$ and $\ell\in\N$ so that for any $\phi\in C_\b^{2\ell}(\X)$ we have
     \begin{equation*}
     \mu_y(\phi)=m_\X(\phi)+ O_\mu\left(|y|^{\eta}S_{\infty,\ell}(\phi)\right),
     \end{equation*}
     as $y\to 0$.
\end{theorem}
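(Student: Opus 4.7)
The plan is to exploit the $L^2$-spectral decomposition of $\X$. Write $\phi = m_\X(\phi) + \phi_{\mathrm{cusp}} + \phi_{\Eis}$ into its constant, cuspidal, and Eisenstein components. Since $\mu$ is a probability measure, the constant piece contributes exactly $m_\X(\phi)$ to $\mu_y(\phi)$, so the task is to establish a polynomial-in-$y$ bound for $\mu_y(\phi_{\mathrm{cusp}})$ and $\mu_y(\phi_{\Eis})$. For each automorphic form $\psi$ with spectral parameter $t_\psi$, the key input is the Fourier expansion of $\psi(n(x)g_y)$ along the horocycle orbit: integrating against $\mu$ converts the characters $e(nx)$ into values $\widehat{\mu}(-n)$, producing a single sum weighted by Hecke coefficients and Whittaker values.

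Concretely, for a Hecke--Maass cuspidal newform $\psi$ of spectral parameter $t_\psi$, one has
\begin{equation*}
\mu_y(\psi) \;=\; c_\psi \sum_{n\neq 0} \frac{\lambda_\psi(n)}{\sqrt{|n|}}\, W_{it_\psi}(|n|y)\,\widehat{\mu}(-n),
\end{equation*}
where $W_{it}(u) = 2\sqrt{u}\,K_{it}(2\pi u)$ is the standard Whittaker function, $\lambda_\psi(n)$ is the $n$-th Hecke eigenvalue, and $c_\psi$ is a normalization depending polynomially on $t_\psi$. The Bessel factor decays exponentially for $|n|y \gg 1+|t_\psi|$ and obeys the uniform bound $|W_{it}(u)| \ll (1+|t|)^{A}\sqrt{u}$ in the remaining range. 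Truncating the sum at $Y \asymp y^{-1}(1+|t_\psi|)^{B}$, invoking the Kim--Sarnak bound $|\lambda_\psi(n)| \ll |n|^{7/64+\epsilon}$, and then applying Abel summation against the hypothesis $\sum_{|m|\leq X}|\widehat{\mu}(m)| \ll X^{s+\epsilon}$ for some $s < 25/64$ (which is the content of $\dl\mu > 39/64$), yields
\begin{equation*}
|\mu_y(\psi)| \;\ll\; (1+|t_\psi|)^{A'}\,\sqrt{y}\,Y^{7/64+s+\epsilon} \;\ll\; (1+|t_\psi|)^{A''}\,y^{\,25/64 - s - \epsilon},
\end{equation*}
which is power-saving in $y$. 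The Eisenstein contribution is treated analogously via the Fourier expansion of $E(\cdot,1/2+it)$: its two constant terms each contribute $O(y^{1/2})$ after integration against $\mu$, while its nonconstant Fourier coefficients obey the divisor-type bound $O(|n|^\epsilon)$, rendering any appeal to Kim--Sarnak unnecessary and giving a power saving under the weaker condition $s<1/2$.

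To conclude, I would integrate these pointwise estimates against the spectral data of $\phi$. Repeated application of the Casimir operator gives $|\langle \phi,\psi\rangle| \ll (1+|t_\psi|)^{-2\ell}\,S_{\infty,\ell}(\phi)$, and choosing $\ell$ larger than a fixed absolute constant absorbs the polynomial factors $(1+|t_\psi|)^{A''}$ against the Weyl law for the cuspidal spectrum and against the Eisenstein spectral integral. The main obstacle is analytic rather than number-theoretic: since $\phi \in C_\b^{2\ell}$ need not decay at the cusp of $\X$, the full spectral expansion of $\phi$ does not literally converge in the naive sense available for $C_c^\infty$ test functions. I expect to handle this by height-truncating $\phi$ at a parameter $T = y^{-\delta}$, bounding the contribution of the near-cusp region directly from the Fourier-decay hypothesis on $\mu$ (using that a thickened horocycle at height above $T$ occupies a small portion of $\X$), and then applying the spectral machinery above to the compactly supported remainder, with $\delta$ optimized against the gain $25/64 - s$ obtained in the two main estimates.
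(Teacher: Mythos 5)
Your overall strategy coincides with the paper's: spectrally decompose $\phi$ over $[G]:=\SL_2(\Z)\backslash\SL_2(\R)$, apply the Fourier--Whittaker expansion to each cuspidal or Eisenstein piece, integrate against $\mu$ to produce $\widehat\mu$, use the Kim--Sarnak bound $\lambda_\pi(n)\ll n^{7/64+\epsilon}$ on Hecke eigenvalues, and then exploit the $\ell^1$-average decay $\sum_{|m|\le X}|\widehat\mu(m)|\ll X^{1-\dl\mu+\epsilon}$ after truncating the Fourier series; your arithmetic of exponents $25/64 - s = \dl\mu - 39/64 > 0$ matches the paper's $\eta>0$. Two points deserve comment.

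First, the analytic ``main obstacle'' you identify is not actually there, and the height truncation you propose is an unnecessary complication. Since $[G]$ has finite volume, $C_\b^{2\ell}([G])\subset L^2([G])$, so the spectral expansion converges in $L^2$. For pointwise validity and absolute convergence, the paper's Lemma~\ref{lem:spectral-decomposition} gives a direct argument: integrate by parts with the Laplacian $\D$ to obtain $\langle\phi,\varphi\rangle\ll\lambda_\varphi^{-\ell}\|\D^\ell\phi\|_\infty\|\varphi\|_1$, control $\|\varphi\|_1\ll S_{2,d}(\varphi)$ via Lemma~\ref{lem:l1-norm-bound}, then absorb the spectral sum using the trace-class estimate (Lemma~\ref{lem:trace-class-pi}) and Weyl's law. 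No cutoff at a height $T=y^{-\delta}$ and no separate near-cusp bound are needed, and introducing them would force you to quantify the $\mu$-mass of a thickened horocycle near the cusp, which is an extra (and harder) ingredient your hypothesis on $\dl\mu$ does not obviously control.

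Second, you write the cuspidal Whittaker function explicitly as $W_{it}(u)=2\sqrt{u}K_{it}(2\pi u)$, which is correct only for the $K$-spherical (weight-zero) vector. A general $\phi\in C_\b^{2\ell}(\X)$ is not $K$-invariant, so the spectral decomposition must run over a full $K$-type basis $\B(\pi)$ of each irreducible $\pi$, and the Whittaker functions of higher-weight vectors are not Bessel functions. The paper avoids this by using the abstract estimate \eqref{eq:rapid-decay}, namely $W(a(y)k)\ll_A|y|^{-A}S_{2,d}(W)$ for any $A\in(-\tfrac12,\infty)$, uniformly over the $K$-type basis and the archimedean parameter, with the dependence on the parameter entirely absorbed into the Sobolev norm. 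Your formulation would need to be upgraded to this (or an equivalent) to cover the stated class $C_\b^{2\ell}(\X)$. Up to these two completions, your plan yields the theorem by the paper's route.
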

We refer the reader to \S \ref{sobol} for the definitions of $C_\b^{2\ell}(\X)$ and $S_{\infty,\ell}(\cdot)$. The number $0.609375=\tfrac{39}{64}=\tfrac{1}{2}+\tfrac{7}{64}$ is related to the \emph{spectral gap} for $\X$; see \S\ref{sec:fourier-coefficient} for details. We point out that Theorem \ref{thm:fractal_withoutbase} is new in this setting even in its qualitative form, that is, without any rate of convergence.

\begin{remark}
    Note that the above theorem does not require any restriction of the geometric structure of the measure $\mu$, such as self-similarity. Moreover, we can choose any $0<\eta<\dl\mu-\tfrac{39}{64}$ in Theorem \ref{thm:fractal_withoutbase}.
\end{remark}

More generally, we have the following.

\begin{corollary}\label{cor:convol-infty}
    Let $\mu$ be a Borel probability measure on $\R$ with $\dl\mu>\tfrac{39}{64}$. Then there exist $\eta>0$ and $\ell>0$ such that for any $x_0\in \R,$ we have $$\int\phi(n(x_0+x)g_y)\d\mu(x)=\int_\X\phi\d m_\X + O_\mu(|y|^\eta S_{\infty,\ell}(\phi))$$
    uniformly in $x_0$. 
\end{corollary}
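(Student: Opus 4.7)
The plan is to reduce Corollary \ref{cor:convol-infty} directly to Theorem \ref{thm:fractal_withoutbase} applied to a suitably translated measure. For a fixed $x_0\in\R$, let $\mu^{x_0}$ denote the pushforward of $\mu$ under the translation $x\mapsto x_0+x$. Then by change of variables,
\[
\int\phi(n(x_0+x)g_y)\d\mu(x)=\int\phi(n(x)g_y)\d\mu^{x_0}(x)=\mu^{x_0}_y(\phi),
\]
in the notation of \eqref{eq:def-mu-y}. So the claim is equivalent to Theorem \ref{thm:fractal_withoutbase} for the family $\{\mu^{x_0}\}_{x_0\in\R}$ with a constant that does not depend on $x_0$.

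Next, I would observe that the Fourier $\ell^1$-data of $\mu$ is translation-invariant. With the usual convention $\widehat{\nu}(m)=\int e^{-2\pi imx}\d\nu(x)$, one computes $\widehat{\mu^{x_0}}(m)=e^{-2\pi imx_0}\widehat{\mu}(m)$, so $|\widehat{\mu^{x_0}}(m)|=|\widehat{\mu}(m)|$ for every $m\in\Z$. Consequently
\[
\sum_{|m|\le X}|\widehat{\mu^{x_0}}(m)|=\sum_{|m|\le X}|\widehat{\mu}(m)|\quad\text{for every }X>0,
\]
and in particular $\dl\mu^{x_0}=\dl\mu>\tfrac{39}{64}$, so Theorem \ref{thm:fractal_withoutbase} applies to $\mu^{x_0}$.

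Finally, I would apply Theorem \ref{thm:fractal_withoutbase} to each $\mu^{x_0}$. The key point is that the implicit constant arising in the conclusion of Theorem \ref{thm:fractal_withoutbase} depends on $\mu$ only through the quantities $\sum_{|m|\le X}|\widehat{\mu}(m)|$ (together with the value of $\dl\mu$), since these are the only inputs available through the hypothesis. Because these data are preserved by translation, the bound
\[
\mu^{x_0}_y(\phi)=m_\X(\phi)+O_\mu(|y|^\eta S_{\infty,\ell}(\phi))
\]
holds with the same exponent $\eta$ and the same implicit constant for every $x_0\in\R$, yielding the desired uniformity.

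The only point requiring care is the last one: one must inspect the proof of Theorem \ref{thm:fractal_withoutbase} to confirm that its implicit constant is indeed a function only of the translation-invariant Fourier data of $\mu$, with no hidden dependence on, e.g., the support or center of mass of $\mu$. Given that the method (as advertised in the introduction) proceeds by Fourier expansion on $\X$ and feeds the resulting coefficients through automorphic bounds term by term, this translation-invariance should be transparent from the structure of the argument rather than an obstacle.
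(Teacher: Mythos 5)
Your proof is correct and is essentially the same argument as the paper's, just phrased as ``translate the measure'' rather than ``shift the base point.'' The paper proves a more general Proposition \ref{prop:fractal} that allows a base point $g_0 = n(x_0)a(q^{-1})$, and there the shift $x_0$ enters only through the unimodular phase $e(mx_0)$ in Lemma \ref{lem:fourier-transform-basis}, which is discarded when one takes absolute values; the paper then observes that since the error term is independent of $x_0$, the Corollary follows from Theorem \ref{thm:fractal_withoutbase}. Your identity $|\widehat{\mu^{x_0}}(m)|=|\widehat{\mu}(m)|$ is exactly the same observation seen from the dual side, and your flagged ``point requiring care''---that the implied constant depends on $\mu$ only through the translation-invariant data $\sum_{|m|\le X}|\widehat{\mu}(m)|$---is precisely what the proof of Proposition \ref{prop:fractal} makes manifest, since that is the only place $\widehat{\mu}$ enters (via Lemma \ref{lem:averaging-mu-hat-q}).
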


\subsection {Sharpness of Theorem \ref{thm:fractal_withoutbase}}\label{opti}

Quite interestingly, we note that the number $\tfrac{39}{64}$ in Theorem \ref{thm:fractal_withoutbase} is not an artifact of the method, but it is a reality check. Upon the assumption of the \emph{Ramanujan--Petersson Conjecture} (see the discussion after \eqref{eq:bound-hecke-cusp}) we can replace the number $\tfrac{39}{64}$ by $\tfrac{1}{2}$ in Theorem \ref{thm:fractal_withoutbase} -- this is \emph{sharp}. Indeed, for every $\epsilon>0$, there exists a measure $\mu_{\epsilon}$, constructed by Kaufman in \cite{Kauf81}, satisfying $\dl\mu_{\epsilon}\geq \frac{1}{2}-\epsilon$. On the other hand, $\mu_\epsilon$ is supported on $\mathcal{W}(\psi_{\epsilon+1})$, where $\psi_{\tau}(q):=q^{-\tau},\, q\in\N$ (see \S \ref{sec:Diop} for definitions). Thus, employing \cite[Theorem 9.1]{khalil2023random}, we see that $\mu_{\epsilon,y}$ \emph{fails} to equidistribute.

\vspace{8mm}

We now give the existence of a class of measures that satisfy the Fourier $\ell^1$-dimension condition in Theorem \ref{thm:fractal_withoutbase}. We refer to \S\ref{sec:measure-theory} for the undefined terminologies mentioned in the following. 

\begin{corollary}\label{cor:convol-2}
    Let $\mu_i$, for $i=1, 2$, be two Borel probability measures on $\R$ such that each $\mu_i$ is $s_i$-AD-regular (see \S\ref{sec:measure-theory} for definitions) for $s_i>0$. If $\frac{s_1+s_2}{2}>\tfrac{39}{64}$, then the convolution $\mu_1\ast\mu_2$ satisfies the conclusion of Theorem \ref{thm:fractal_withoutbase}.
\end{corollary}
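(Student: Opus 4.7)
The plan is to reduce to Theorem~\ref{thm:fractal_withoutbase} by showing that the convolution $\mu := \mu_1 * \mu_2$ has Fourier $\ell^1$-dimension exceeding $\tfrac{39}{64}$. The crucial identity is $\widehat{\mu}(m) = \widehat{\mu_1}(m)\widehat{\mu_2}(m)$, which combined with the Cauchy--Schwarz inequality yields
$$\sum_{|m|\le X} |\widehat{\mu}(m)| \le \Bigl(\sum_{|m|\le X} |\widehat{\mu_1}(m)|^2\Bigr)^{1/2} \Bigl(\sum_{|m|\le X} |\widehat{\mu_2}(m)|^2\Bigr)^{1/2}.$$
Thus the problem reduces to an $\ell^2$-average Fourier bound for each factor separately.

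The central input is the classical estimate
$$\sum_{|m|\le X}|\widehat{\mu_i}(m)|^2 \ll X^{1-s'_i},\qquad s'_i<s_i,$$
which follows from the finiteness of the Riesz $s'_i$-energy of $\mu_i$ (guaranteed by $s_i$-AD-regularity) together with the Parseval-type identity $I_{s'}(\mu_i)\asymp\int|\widehat{\mu_i}(\xi)|^2|\xi|^{s'-1}d\xi$ and a dyadic decomposition. One minor subtlety is that the energy--Fourier identity is naturally continuous; the cleanest way to extract the discrete bound is to periodize $\mu_i$ to a probability measure on $\R/\Z$ (which preserves AD-regularity up to constants, since $\mu_i$ is compactly supported) and apply the energy--Fourier correspondence directly on the torus, where the Fourier transform is indexed by integers. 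This classical step is where I expect the only real technical friction to lie; it should ultimately be invoked as a black box from the Mattila-style theory of AD-regular measures rather than reproved.

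Substituting the $\ell^2$-bounds back into the Cauchy--Schwarz estimate gives
$$\sum_{|m|\le X}|\widehat{\mu}(m)|\ll X^{1-(s'_1+s'_2)/2}, \qquad s'_i<s_i.$$
Letting $s'_i \nearrow s_i$ and comparing with the definition of $\dl$, this yields $\dl\mu \ge (s_1+s_2)/2 > \tfrac{39}{64}$, so Theorem~\ref{thm:fractal_withoutbase} applies and delivers the desired effective equidistribution, completing the proof. The remark that arbitrary $0<\eta<\dl\mu-\tfrac{39}{64}$ is admissible in Theorem~\ref{thm:fractal_withoutbase} then translates into an explicit quantitative rate depending on $s_1+s_2$.
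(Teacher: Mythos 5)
Your proposal is correct and follows essentially the same route as the paper: convolution identity for Fourier transforms, Cauchy--Schwarz to pass from an $\ell^1$ to an $\ell^2$ sum, the AD-regularity to get the $\ell^2$-Fourier-dimension bound, and then Theorem~\ref{thm:fractal_withoutbase}. The only difference is cosmetic: the paper simply cites the fact that an $s$-AD-regular measure has $\dim_{\ell^2}\mu=s$ (the paper's \eqref{eqn:l2=s}, attributed to \cite[\S3.8]{mattila2015fourier}), whereas you sketch the energy--Fourier and periodization argument behind it, flagging it (appropriately) as a black-box citation rather than something to reprove.
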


Recently, Chow--Varj{\'u}--Yu in \cite{CVY24} find a robust way to compute Fourier $\ell^1$-dimensions for self-similar measures whose attractors are missing digit Cantor sets. In the following, $\Dh X$ denotes the Hausdorff dimension of a set $X$. Let $K_{b,D,x}$ denote a shifted missing digit Cantor set, as defined in \eqref{eq:shifted-IFS}. Let $\mu_{b,D,x}$ denote the $\frac{\log\#D}{\log b}$-dimensional Hausdorff measure on $K_{b,D,x}$; see \eqref{eq:haus-shifted-IFS}.

\begin{corollary}\label{intro_thm1}
For every $s>0.609375$, there exists an explicitly computable $b(s)\in\N$, as in \eqref{eqn:b(s)}, such that for any $b\ge b(s)$, any $x\in\R$, and any $D$ in arithmetic progression, with $\Dh K_{b,D}\geq s,$
the measure $\mu_{b,D, x}$ satisfies the conclusion of Theorem \ref{thm:fractal_withoutbase}.
\end{corollary}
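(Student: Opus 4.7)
The plan is to reduce Corollary \ref{intro_thm1} to Theorem \ref{thm:fractal_withoutbase} by verifying the hypothesis $\dim_{\ell^1}\mu_{b,D,x}>\tfrac{39}{64}$, and to extract the explicit threshold $b(s)$ from the machinery of Chow--Varj\'u--Yu \cite{CVY24}.

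First, I would observe that the shifted measure $\mu_{b,D,x}$ is simply an affine translate of the unshifted self-similar measure $\mu_{b,D}$ (this should follow directly from the definition \eqref{eq:shifted-IFS} and \eqref{eq:haus-shifted-IFS}). Since translation multiplies the Fourier transform by a unimodular character, we have $|\widehat{\mu_{b,D,x}}(m)|=|\widehat{\mu_{b,D}}(m)|$ for every $m\in\Z$, and hence $\dim_{\ell^1}\mu_{b,D,x}=\dim_{\ell^1}\mu_{b,D}$. Thus the $x$-dependence disappears at the level of the Fourier $\ell^1$-dimension, and the question reduces to bounding $\dim_{\ell^1}$ of the (unshifted) self-similar measure.

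Second, I would invoke \cite{CVY24}, which provides quantitative lower bounds for $\dim_{\ell^1}\mu_{b,D}$ when $D$ is an arithmetic progression and $b$ is sufficiently large. In essence, their theorem delivers a function $b(s)$ with the property that for $b\ge b(s)$ and $D$ an AP with $\Dh K_{b,D}\ge s$, one has $\dim_{\ell^1}\mu_{b,D}\ge s'$ for some $s'$ that can be pushed arbitrarily close to $s$ as $b\to\infty$ (their method controls the $\ell^1$-sum of Fourier coefficients of such Bernoulli convolutions via sum--product/flattening estimates). Choosing $b(s)$ large enough so that $s'>\tfrac{39}{64}$ whenever $s>\tfrac{39}{64}$ gives the explicit threshold in \eqref{eqn:b(s)}. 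With this bound in hand, Theorem \ref{thm:fractal_withoutbase} applies directly to $\mu_{b,D,x}$ and yields the effective equidistribution conclusion.

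The main obstacle is not conceptual but accounting: one must unwind the quantitative dependence in \cite{CVY24}, namely translate the statement about $\dim_{\ell^1}\mu_{b,D}$ into an explicit threshold $b(s)$ compatible with \eqref{eqn:b(s)}, and verify that the AP hypothesis on $D$ (which is essential for CVY's sum--product input) is carried through cleanly. Once this bookkeeping is done, the rest — translation invariance of $|\widehat\mu|$ and application of Theorem \ref{thm:fractal_withoutbase} — is immediate.
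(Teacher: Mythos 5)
Your approach is essentially the paper's: verify $\dl\mu_{b,D}>\tfrac{39}{64}$ via the explicit Chow--Varj\'u--Yu lower bound $\dl\mu_{b,D}\geq\frac{\log l}{\log b}-\frac{\log(4+\log(2l))}{\log b}$ for $D$ in arithmetic progression, observe that shifting by $x$ leaves $|\widehat\mu|$ unchanged, and apply Theorem~\ref{thm:fractal_withoutbase}. The paper carries out the bookkeeping you defer -- deducing the explicit $b(s)$ in \eqref{eqn:b(s)} from the CVY inequality via a short monotonicity estimate (together with the condition $b-b^s\geq 2$ to guarantee such a $D$ exists) -- and routes the shift through Corollary~\ref{cor:convol-infty} rather than directly through the Fourier-transform modulus, but these are the same observation.
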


In the case of $x\in\Q$ and $\Dh K_{b,D}>0.9992$, ($\Dh K_{b,D}>0.839$ for prime $b$) the conclusion of Corollary \ref{intro_thm1} was obtained in a breakthrough work \cite[Theorem C, Remark 1.4]{khalil2023random}.
However, Corollary \ref{intro_thm1} is the first result that affirmatively answers Problem \ref{prob1} for a class of \emph{irrational} self-similar measures on $\R$. As an easy example, let $b=450$ and $D=\{0,1,\cdots,446\}$, then $\dim_{\mathcal{H}}(K_{b,D})<0.9992$ and $\dl\mu_{b,D}>0.609375$ by \eqref{eqn:l1}.\footnote{This corollary was recently further improved by B{\'e}nard--He--Zhang \cite{benard2024khintchin} for any \textit{self-similar} measure on $\R$.}. Also, we note that $b(s)$ as given in \eqref{eqn:b(s)} is not optimal.

Our method of proof is completely different than that of \cite{khalil2023random}; see \S \ref{sec:idea} for details and comparison with other works.

\subsection{Ideas and comparison with other work}\label{sec:idea}

Let $\phi\in C_c^\infty(\X)$ with $m_\X(\phi)=0$ and $\mu$ be a Borel probability measure on $\R$. To prove Theorem \ref{thm:fractal_withoutbase} for certain Borel probability measure $\mu$ we need to show that $\mu_y(\phi)\ll |y|^\eta$ for some $\eta>0$. Note that $\phi(n(\cdot)g)$ is $\Z$-invariant. After Plancherel we write
\begin{equation*}
    \mu_y(\phi)=\sum_{m\in\Z}\widehat\phi_y(m){\widehat\mu(m)},\quad\widehat\phi_y(m):=\int_{\R/\Z}\phi(n(x)a(y))e(-mx)\d x.
\end{equation*}
Integrating by parts in the above integral with $\left(\begin{smallmatrix}0&1\\0&0\end{smallmatrix}\right)$ in the universal enveloping algebra of $\mathfrak{sl}_2(\R)$ one checks that $\widehat\phi_y(m)$ is supported essentially on $|m|\le |y|^{-1}$. Thus we essentially need to show that
\begin{equation}\label{eq:truncation-baby}
    \sum_{|m|\le |y|^{-1}}\widehat\phi_y(m){\widehat\mu(m)} \ll |y|^\eta,\quad\eta>0.
\end{equation}
At this point, we use \emph{spectral decomposition} of $L^2(\SL_2(\Z)\backslash\SL_2(\R))$. Thus we need to show \eqref{eq:truncation-baby} for $\phi$ varying over a suitable orthonormal basis in every \emph{standard} automorphic representation for $\SL_2(\Z)$, with a certain Sobolev-type uniformity as the representations vary. This is a new input that has not been explored previously in case of singular measures and is \emph{crucial} for our result. Using the local representation theory of $\SL_2(\R)$ and \emph{spectral gap} of $\X$ we prove
\begin{equation}\label{eq:spectral-gap}
    \sup_m|\widehat\phi_y(m)| \ll |y|^{1/2-\theta},\quad \theta>\frac{7}{64}.
\end{equation}
Consequently, Theorem \ref{thm:fractal_withoutbase} follows from the assumption on $\dim_{\ell^1}\mu$.

Now we briefly mention that our proof has a certain similar flavour as the same of \cite[Theorem 3.2]{Ven-sparse} and \cite[Proposition 4.2]{LM}. More precisely, Venkatesh in \cite[Theorem 3.2]{Ven-sparse} obtained a general bound of  $\widehat{\phi}_y(m)$  for \textit{any test function} $\phi$ using the effective decay of matrix coefficients. However, his method only yields a bound of the form
$$\sup_m|\widehat\phi_y(m)| \ll |y|^{\delta}$$
for some unspecified $\delta>0$; see \cite[Remark 3.1]{Ven-sparse}. This bound is much weaker than \eqref{eq:spectral-gap}, which is required for Theorem \ref{thm:fractal_withoutbase}. In particular, the sharpness of Theorem \ref{thm:fractal_withoutbase}, as described in \S\ref{opti}, seems to be not possible with the approach of \cite{Ven-sparse} -- with the approach of \cite{Ven-sparse} one will need $\dim_{\ell^1}\mu$ to be sufficiently close to $1$, as opposed to $1/2$. Our novelty in this aspect is to apply spectral decomposition and using finer information about the Fourier coefficient of automorphic forms.

On the other hand, it seems to us that the \emph{sufficient large dimensionality} condition of the measure $\mu$, as required in \cite[Proposition 4.2]{LM} (also, \cite{khalil2023random} and other works proving Ratner-type effective equidistribution in the first steps of these proofs \footnote{Although these works need large dimensionality condition at the beginning, their bootstrap process eventually leads to effective equidistribution without any restriction on dimension.}), is of a different nature than our condition on $\dl\mu$. We think that upon \emph{extra} geometric or regularity condition on $\mu$ one may expect to connect the notions of the average growth of $\widehat\mu$ (phase space) with the dimension of $\mu$ (physical counterpart). 

Finally, beyond Theorem \ref{thm:fractal_withoutbase}, there could be measures for which $\dim_{\ell^1}\mu$ is very small. In this case, if we have more information about $\widehat\mu$ then we may still prove polynomially effective equidistribution. For instance, if $\widehat\mu(m)$ asymptotically behaves as $e(m\alpha)m^{-\delta}$ for any $\delta>0$, we can exploit a \emph{cancellation of the additive twisted Fourier coefficients}, e.g. by Jutila \cite{jutila1987tifr}, to win. This is the main content of Theorem \ref{thm:smooth} in the next subsection.

\subsection{Pointwise Fourier decay}
Our next main theorem answers Problem \ref{prob1} for measures that satisfy Item \ref{item_pointwise}. 

\begin{theorem}\label{thm:smooth}
Let $\mu$ be a Borel probability measure on $\R$ such that there exist a $K\in\N$, sequences $\{\delta_j\}_{j=1}^K\subset(0,\tfrac{1}{2}]$ and $\delta_0>\frac{1}{2}$, and complex numbers $\{\alpha_j\}_{j=1}^K\subset\R$ and $\{\beta_j\}_{j=1}^K\subset\C$ such that
$$\widehat\mu(\xi)= \sum_{j=1}^K|\xi|^{-\delta_j}\beta_j{e(\xi\alpha_j)}+O_\mu\left(|\xi|^{-\delta_0}\right), \quad e(z):=\exp(2\pi i z),$$
as $|\xi|\to\infty$. Then there exists an $\ell\in\N$ such that for any $0<\eta<\min\{\tfrac{1}{2},\delta_0,\tfrac{\delta_1}{2}\dots,\tfrac{\delta_K}{2}\}$ and $\phi\in C_\b^{2\ell}(\X)$ we have
\begin{equation*}
    \mu_y(\phi)=m_{\X}(\phi)+ O_{\eta,\mu}\left(|y|^{\eta}S_{\infty,\ell}(\phi)\right),
\end{equation*}
as $y\to 0$.
\end{theorem}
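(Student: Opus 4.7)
The plan is to adapt the Fourier--spectral framework sketched in \S\ref{sec:idea} by substituting the hypothesized explicit oscillatory asymptotic of $\widehat\mu$ for the generic $\ell^1$-decay assumption, and then exploiting cancellation via Jutila's bound on additively twisted Fourier coefficients of cusp forms. Assume without loss of generality that $m_\X(\phi)=0$. Plancherel on $\R/\Z$ gives
\[
\mu_y(\phi)=\sum_{m\in\Z}\widehat\phi_y(m)\widehat\mu(m),
\]
and repeated integration by parts against the raising element of $\mathfrak{sl}_2(\R)$, as in the derivation of \eqref{eq:truncation-baby}, restricts the sum to $|m|\le y^{-1-\epsilon}$ up to a negligible error $O_{\phi,A}(y^A)$. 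The $m=0$ term is the periodic horocycle average $\int_{\R/\Z}\phi(n(x)g_y)\,\mathrm{d}x$, which is $O(y^{1/2-\theta}S_{\infty,\ell}(\phi))$ by classical effective horocycle equidistribution.

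Substituting the asymptotic of $\widehat\mu$ for $m\neq 0$ decomposes the surviving sum into $K$ \emph{main terms}
\[
T_j(y):=\beta_j\sum_{0<|m|\le y^{-1-\epsilon}}\widehat\phi_y(m)\,|m|^{-\delta_j}\,e(m\alpha_j),\qquad j=1,\dots,K,
\]
plus a remainder $\mathcal{E}(y)\ll \sum_{m\neq 0}|\widehat\phi_y(m)|\,|m|^{-\delta_0}$. The remainder is controlled by combining the spectral-gap bound $|\widehat\phi_y(m)|\ll y^{1/2-\theta}S_{\infty,\ell}(\phi)$ of \eqref{eq:spectral-gap}, the rapid decay of $\widehat\phi_y(m)$ past $|m|\sim y^{-1}$, and the assumption $\delta_0>1/2$, which together yield $\mathcal{E}(y)\ll y^{\min(\delta_0,1/2)-\epsilon}S_{\infty,\ell}(\phi)$.

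The heart of the proof is the estimation of each $T_j(y)$. Spectrally decomposing $\phi$ along an orthonormal basis of Hecke--Maass cusp forms and Eisenstein sections, for each cuspidal eigenform with Hecke coefficients $\lambda_\phi(n)$ and spectral parameter $ir$ the Whittaker expansion gives
\[
\widehat\phi_y(m)=c_\phi\cdot\frac{\lambda_\phi(|m|)}{\sqrt{|m|}}\,\sqrt{y}\,K_{ir}(2\pi|m|y),
\]
so that $T_j(y)$ reduces to an additively twisted Hecke-eigenvalue sum against a smooth Whittaker weight effectively supported in $|m|\lesssim y^{-1}$. The decisive input is Jutila's uniform estimate
\[
\left|\sum_{n\le N}\lambda_\phi(n)\,e(n\alpha)\right|\ll N^{1/2+\epsilon},
\]
with polynomial dependence in the spectral parameter. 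Feeding this into Abel summation against the $|m|^{-1/2-\delta_j}$-weighted Whittaker factor, and balancing the Whittaker cutoff at $|m|\sim y^{-1}$ against the fractional singularity $|m|^{-\delta_j}$, yields the power saving $T_j(y)\ll y^{\delta_j/2-\epsilon}(1+|r|)^{c}S_{\infty,\ell}(\phi)$ per cuspidal eigenform. The Eisenstein piece is handled analogously, with $\lambda_\phi(n)$ replaced by the divisor-type coefficients $\sigma_{2it}(n)n^{-it}$ and Jutila's bound replaced by Heath-Brown's additively twisted divisor estimate.

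The principal technical hurdle is securing uniformity across the spectrum: the archimedean Whittaker $K_{ir}$, the Eisenstein normalization $\xi(1+2it)^{-1}$, and the implicit constants in the Jutila/Heath-Brown inputs all grow polynomially in the spectral parameter, and these dependences must be absorbed into the Sobolev norm $S_{\infty,\ell}(\phi)$ via an appropriate choice of $\ell$. A secondary issue is the exceptional cuspidal contribution, where $K_{ir}(2\pi|m|y)$ exhibits non-tempered growth $\sim(|m|y)^{-\theta}$ for small $|m|y$; this is tamed by the Kim--Sarnak bound $\theta\le 7/64$ and surfaces only as an $\epsilon$-loss in the final exponent, harmlessly absorbed into the admissible range $\eta<\min\{1/2,\delta_0,\delta_j/2\}$.
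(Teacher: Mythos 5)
Your high-level strategy (Plancherel, substitute the asymptotic of $\widehat\mu$, spectrally decompose, exploit cancellation of additively twisted Hecke coefficients) is the right one and matches the paper in spirit, but two of your steps have genuine gaps that would need repair.

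\textbf{The error-term bound is too weak.} You bound $\mathcal{E}(y)\ll\sum_{m\neq 0}|\widehat\phi_y(m)|\,|m|^{-\delta_0}$ by the pre-decomposition spectral-gap bound $\sup_m|\widehat\phi_y(m)|\ll|y|^{1/2-\theta}$ of \eqref{eq:spectral-gap}, where $\theta>7/64$. For $\tfrac12<\delta_0<1$ this gives $\mathcal{E}(y)\ll|y|^{1/2-\theta}\cdot|y|^{(\delta_0-1)(1+\epsilon)}=|y|^{\delta_0-\tfrac12-\theta-\epsilon}$, which is a \emph{positive} power of $|y|$ only when $\delta_0>\tfrac12+\theta>\tfrac{39}{64}$. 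Your claimed estimate $\mathcal{E}(y)\ll|y|^{\min(\delta_0,1/2)-\epsilon}$ is therefore incorrect: the $7/64$-loss does not vanish. The paper sidesteps this entirely (Lemma \ref{lem:error-term-additive-twist}) by spectrally decomposing \emph{first} and then, for each $\varphi\in\pi$, bounding the Whittaker factor pointwise via \eqref{eq:rapid-decay} with $A=-\eta>-\min(\tfrac12,\delta_0-\tfrac12)$, after which $\sum_m|\lambda_\pi(m)|/m^{1+\delta_0-\tfrac12-\eta}$ converges by the averaged Ramanujan bound \eqref{eq:average-GRC}. No $\theta$-loss is incurred, and only $\delta_0>\tfrac12$ is needed. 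Your approach would give a theorem with $\delta_0>\tfrac{39}{64}$, strictly weaker than stated.

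\textbf{The Eisenstein contribution is not ``analogous''.} You propose to replace Jutila's bound by an additively twisted divisor estimate and treat the continuous spectrum in parallel with the cuspidal part. But for the divisor-type coefficients $\sigma_{2s}(n)n^{-s}$ the additively twisted sum $\sum_{n\le N}\sigma_{2s}(n)n^{-s}e(np/q)$ is \emph{not} $O(N^{1/2+\epsilon})$: the associated Dirichlet series has poles (at $z=1\pm s$ in the paper's normalization), so there is a genuine main term of order $N\cdot q^{-1\mp 2s}$ up to constants. This main term does not get killed by Abel summation; it produces the polar contribution $\mathcal{M}$ of Lemma \ref{lem:voronoi-eis}, which then has to be integrated over the Eisenstein parameter $\Re(s)=0$. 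The paper's treatment of $\mathcal{M}$ (Lemma \ref{lem:additive-twist-eisenstein}) is a separate, substantial argument: one first uses the intertwiner/functional-equation identity \eqref{eq:functional-eqn-eis}, \eqref{eq:intertwiner-isometry} to merge the two $\pm s$ terms, then analytically continues the spectral sum $H(\tfrac12-\delta+s)$ into $-\tfrac12<\Re(s)\le0$ (using Remark \ref{rem:non-unitary} and the holomorphy of $\Eis(f)$ there), and finally shifts the $s$-contour to $\Re(s)=-\tfrac12+\epsilon$ to recover a power saving. Your proposal skips this entirely, and without it the Eisenstein part of the sum is not controlled.

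As a side remark, if one grants the uniform-in-$\alpha$ Jutila/Wilton bound for Maass forms with polynomial spectral-parameter dependence, your Abel-summation treatment of the \emph{cuspidal} main terms would in fact give the stronger exponent $|y|^{\delta_j-\epsilon}$ rather than the paper's $|y|^{\delta_j/2-\epsilon}$ (the latter loses a factor via the Dirichlet-box choice $q\le|y|^{-1/2}$). So the cuspidal half of your plan is plausible and potentially sharper; it is the error term and especially the Eisenstein polar terms that are unaddressed.
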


As an immediate corollary (which can be deduced more directly using Ramanujan--Petersson Conjecture on average, (\emph{cf}.\ Lemma \ref{lem:error-term-additive-twist})) we record the following.

\begin{corollary}
    Let $\mu$ be a Borel probability measure on $\R$ such that there exists $\delta>0$ such that 
    $$\vert\widehat{\mu}(\xi)\vert \ll \vert \xi\vert^{-(\frac{1}{2}+\delta)}.$$ Then there exists an $l\in \N$ such that for any $0<\eta< \frac{1}{2}$ and $\phi\in C_\b^{2\ell}(\X)$ we have
\begin{equation*}
    \mu_y(\phi)=m_{\X}(\phi)+ O_{\eta,\mu}\left(|y|^{\eta}S_{\infty,\ell}(\phi)\right),
\end{equation*}
as $y\to 0$.
\end{corollary}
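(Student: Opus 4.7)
The corollary is in fact an immediate specialization of Theorem~\ref{thm:smooth}, taken with no oscillatory main term ($K=0$) and $\delta_0=\tfrac12+\delta>\tfrac12$: then $\min\{\tfrac12,\delta_0\}=\tfrac12$, so any $\eta<\tfrac12$ is admissible. This is why the statement is labelled a corollary. What I would actually write up, following the parenthetical hint, is the substantially shorter direct argument which needs only the on-average Ramanujan--Petersson bound (Lemma~\ref{lem:error-term-additive-twist}) and bypasses all of the additive-twist cancellation that Theorem~\ref{thm:smooth} was set up to handle.

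My first step is to subtract the mass so that $m_\X(\phi)=0$ and then spectrally decompose $L^2(\X)$, which reduces matters, up to a Sobolev-controlled truncation in the spectral parameter, to a single-form bound $\mu_y(\phi_f)\ll_f y^{1/2}$ for Hecke--Maass cusp forms $\phi_f$ and for unitary Eisenstein series. The second step is to invoke the Whittaker--Fourier expansion along the unipotent,
\begin{equation*}
\phi_f(n(x)g_y)=c_f(y)+\sum_{m\ne 0}\frac{\lambda_f(|m|)}{\sqrt{|m|}}\sqrt{y}\,W_f(2\pi|m|y)\,e(mx),
\end{equation*}
together with Plancherel on $\R/\Z$, to rewrite $\mu_y(\phi_f)$ as $c_f(y)+\sqrt{y}\sum_{m\ne 0}\lambda_f(|m|)|m|^{-1/2}W_f(2\pi|m|y)\widehat\mu(m)$. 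The constant term vanishes for cusp forms and is $O(\sqrt{y})$ for Eisenstein series; the rapid decay of $W_f$ for $2\pi|m|y\gg 1$, established by standard integration by parts against $\bigl(\begin{smallmatrix}0&1\\0&0\end{smallmatrix}\bigr)\in U(\mathfrak{sl}_2)$ as in the derivation leading to \eqref{eq:truncation-baby}, effectively truncates the sum at $1\le|m|\ll y^{-1}$.

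The heart of the plan is then a Cauchy--Schwarz: using $|\widehat\mu(m)|\ll|m|^{-1/2-\delta}$,
\begin{equation*}
\bigl|\mu_y(\phi_f)-c_f(y)\bigr|\ll_f\sqrt{y}\sum_{1\le|m|\ll y^{-1}}\frac{|\lambda_f(m)|}{|m|^{1+\delta}}\ll_f\sqrt{y}\left(\sum_{m\ge 1}\frac{|\lambda_f(m)|^2}{m^{1+\delta}}\right)^{\!1/2}\!\!\left(\sum_{m\ge 1}\frac{1}{m^{1+\delta}}\right)^{\!1/2},
\end{equation*}
in which the second factor converges since $\delta>0$, and the first factor is finite by the Rankin--Selberg average bound $\sum_{m\le X}|\lambda_f(m)|^2\ll_f X$ combined with partial summation. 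This yields the single-form estimate $\mu_y(\phi_f)\ll_f\sqrt{y}$.

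The main obstacle I expect is uniformity in the spectral parameter of $f$: I would have to track the polynomial dependence of the Whittaker function, of the implied Rankin--Selberg constant, and, for Eisenstein series, of the relevant divisor sums and $\zeta$-factors, and then absorb all of this into a Sobolev norm $S_{\infty,\ell}(\phi)$ for a suitably large $\ell$. This is precisely the technology underlying the spectral-truncation step; any $\epsilon$-loss it costs only converts the exponent $\tfrac12$ into an arbitrarily close $\eta<\tfrac12$, matching the statement.
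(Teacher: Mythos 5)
Your observation that the statement formally specializes Theorem~\ref{thm:smooth} with no oscillatory main term (so that $\min\{\tfrac12,\delta_0\}=\tfrac12$) is fine as a reading of that theorem's statement, and your direct plan (spectral decomposition, reduction to a single-form Whittaker bound, Cauchy--Schwarz against the Rankin--Selberg average) is indeed the ``more direct'' route the paper's parenthetical points to. But there is a concrete error in the direct argument, and it hides a real difference between $\eta<\tfrac12$ and $\eta<\min(\tfrac12,\delta)$.

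The Fourier--Whittaker expansion you write down carries one factor of $\sqrt{|m|}$ too many in the denominator. In the classical $K$-Bessel normalization a Hecke--Maass form reads
$$\phi_f(n(x)g_y) = c_f(y) + \sqrt{y}\sum_{m\neq 0}\lambda_f(|m|)\,K_{it_f}(2\pi|m|y)\,e(mx),$$
i.e.\ the Hecke eigenvalue appears \emph{without} the $|m|^{-1/2}$; equivalently, in the paper's convention \eqref{eq:fourier-expansion} the kernel is $W_\varphi(a(my))$, which already behaves like $\sqrt{|m|y}\,K_{it}(2\pi|m|y)$ and not like your $\sqrt{y}\,W_f(2\pi|m|y)$. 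Writing $\frac{\lambda_f(|m|)}{\sqrt{|m|}}\cdot\sqrt{y}\,W_f(2\pi|m|y)$ double-counts the $|m|^{-1/2}$. After applying $|\widehat\mu(m)|\ll|m|^{-1/2-\delta}$ and the boundedness of the $K$-Bessel kernel for $|m|y\ll 1$, the quantity you actually need to control is
$$\sqrt{y}\sum_{1\le|m|\ll y^{-1}}\frac{|\lambda_f(m)|}{|m|^{1/2+\delta}},\qquad\text{not}\qquad \sqrt{y}\sum_{1\le|m|\ll y^{-1}}\frac{|\lambda_f(m)|}{|m|^{1+\delta}}.$$
The second sum is $O_f(1)$, which is why you obtained $\eta<\tfrac12$; the first sum is not. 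Cauchy--Schwarz together with $\sum_{m\le X}|\lambda_f(m)|^2\ll X^{1+\epsilon}$ and partial summation bounds the first sum by $\ll X^{1/2-\delta+\epsilon}$ when $\delta<\tfrac12$, and after inserting $X\asymp y^{-1}$ and multiplying by $\sqrt{y}$ one is left with $\ll y^{\delta-\epsilon}$. That is precisely the exponent $\eta<\min(\tfrac12,\delta)$ furnished by Lemma~\ref{lem:error-term-additive-twist} (apply it with the lemma's $\delta$ replaced by $\delta_0-\tfrac12$), which is what the parenthetical in the statement is hinting at. In other words, the normalization slip is not cosmetic: it is precisely what makes your Cauchy--Schwarz appear to deliver the full $\tfrac12$ saving independently of $\delta$, whereas the corrected computation only recovers $\min(\tfrac12,\delta)$ from this method. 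You should redo the Whittaker expansion with the correct normalization and carry that through Cauchy--Schwarz, or else rely explicitly on the statement of Theorem~\ref{thm:smooth} rather than on the direct route.
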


As a \emph{proof of concept} for Theorem \ref{thm:smooth}, using it we answer Problem \ref{prob1} for the push-forward of the Lebesgue measure by a non-constant analytic map, namely Corollary \ref{thm: function equi}. However, we are not claiming any originality here -- Corollary \ref{thm: function equi} can be deduced more directly (that is, not going via Theorem \ref{thm:smooth}).

Let $f:\R\to\R$ be a non-constant real analytic function and $w$ be a compactly supported non-negative $L^1$-normalized smooth function on $\R$. Let $\mu^{w,f}$ be the Borel probability measure defined by $f_\star(w\circ\mathrm{Leb})$ where $\mathrm{Leb}$ denotes the Lebesgue measure, that is
$$\mu^{w,f}(h):=\int_\R h(f(x))w(x)\d x,\quad h\in C(\R).$$
One can check that $\mu^{w,f}$ is an absolutely continuous measure with possibly \emph{non-continuous} density.
One may analyse the asymptotics of the Fourier transform of $\mu^{w,f}$ via the \emph{method of stationary phase}; see Proposition \ref{prop: Stationary phase}, which also shows that Theorem \ref{thm:smooth} is not vacuous.

Let $Z(f)$ denote the zero set of $f$. Note that as $f$ is analytic and $w$ is compactly supported the set $Z(f')\cap\supp(w)$ is finite (counted with multiplicity). We define
$$k_{f,w}:=\max\{\text{order of vanishing of }f'\text{ at any }z\in Z(f')\cap\supp(w)\}.$$

\begin{corollary}\label{thm: function equi}
There exists $\ell\in\N$ such that for any $0<\eta<\tfrac{1}{2(k_{f,w}+1)}$ and $\phi\in C_\b^{2\ell}(\X)$ we have 
\begin{equation*}
    \mu^{w,f}_y(\phi)=m_\X(\phi)+ O_{\eta}\left(|y|^{\eta}S_{\infty,\ell}(\phi)\right),
\end{equation*}
as $y\to 0$.
\end{corollary}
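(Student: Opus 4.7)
The plan is to deduce this from Theorem \ref{thm:smooth} by establishing the Fourier asymptotics of $\mu^{w,f}$ via the method of stationary phase. First, I would start from
$$\widehat{\mu^{w,f}}(\xi) = \int_{\R} e(-\xi f(x)) w(x) \, \d x.$$
Since $f$ is analytic on the compact set $\supp(w)$, the critical set $Z(f') \cap \supp(w) = \{z_1, \dots, z_N\}$ is finite. I would localize using a smooth partition of unity subordinate to small neighborhoods of the $z_j$ together with a complementary region where $f'$ is bounded away from zero. On the latter region, repeated integration by parts against the operator $\phi \mapsto \partial_x\bigl(\phi/(-2\pi i \xi f')\bigr)$ yields $O_M(|\xi|^{-M})$ decay for any $M \in \N$, which is absorbed into the eventual error term.

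Next, near each critical point $z_j$ of order $k_j \geq 1$ (so that $f(x) - f(z_j) \sim c_j (x - z_j)^{k_j + 1}$ near $z_j$), Proposition \ref{prop: Stationary phase} produces an asymptotic expansion
$$\int e(-\xi f(x)) w(x) \chi_j(x) \, \d x = e(-\xi f(z_j)) \sum_{i=0}^{N_j} \beta_{j,i}\, |\xi|^{-(1+i)/(k_j+1)} + O\bigl(|\xi|^{-(2+N_j)/(k_j+1)}\bigr),$$
with $\beta_{j,i} \in \C$ (possibly depending on $\sgn(\xi)$). I would choose each $N_j$ large enough so that $(2 + N_j)/(k_j + 1) > \tfrac{1}{2}$, ensuring the remainder is $O(|\xi|^{-\delta_0})$ for a common $\delta_0 > \tfrac{1}{2}$. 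Any leading terms with exponent $\delta_{j,i} := (1+i)/(k_j+1) > \tfrac{1}{2}$ are likewise absorbed into the error.

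Assembling the pieces, $\widehat{\mu^{w,f}}$ satisfies the hypothesis of Theorem \ref{thm:smooth} with $\alpha_{j,i} = -f(z_j) \in \R$, $\beta_{j,i} \in \C$, and $\delta_{j,i} \in (0, \tfrac{1}{2}]$; the smallest $\delta_{j,i}$ in the collection is $1/(k_{f,w} + 1)$, attained at a critical point of maximal order with $i = 0$. Theorem \ref{thm:smooth} then delivers the stated rate $\eta < \tfrac{1}{2(k_{f,w} + 1)}$. The main obstacle is the stationary phase expansion at \emph{degenerate} critical points: one must extract sufficiently many subleading corrections beyond the leading $|\xi|^{-1/(k_j+1)}$ to drive the remainder below $|\xi|^{-1/2}$, which for large $k_j$ requires retaining several terms. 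The possible $\sgn(\xi)$-dependence of $\beta_{j,i}$ is harmless, as one can either apply Theorem \ref{thm:smooth} to the halves $\xi > 0$ and $\xi < 0$ separately, or use the symmetry $\widehat{\mu^{w,f}}(-\xi) = \overline{\widehat{\mu^{w,f}}(\xi)}$.
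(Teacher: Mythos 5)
Your proposal is correct and follows essentially the same route as the paper: apply Proposition \ref{prop: Stationary phase} (choosing the truncation order large enough that the remainder exponent exceeds $\tfrac{1}{2}$ and discarding any terms with exponent $>\tfrac{1}{2}$ into the error) and then invoke Theorem \ref{thm:smooth}; the paper simply cites these two results with $N=\lfloor\max\{k_i\}/2\rfloor$, where the $k_i$ of Proposition \ref{prop: Stationary phase} equal your $k_j+1$. The only slips are cosmetic: with the paper's convention $\widehat\mu(\xi)=\int e(\xi x)\,\d\mu(x)$ the shifts should read $\alpha_j=f(z_j)$ rather than $-f(z_j)$, and you correctly flag (and dispose of) the possible $\sgn(\xi)$-dependence of the stationary-phase coefficients.
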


One may check that the rate of equidistribution in Corollary \ref{thm: function equi} can not be improved without assuming the \emph{Riemann Hypothesis} (\emph{cf}.\ the proof of \cite{strombergsson2004uniform} for $f(x)=x$). On higher rank groups the related problem is significantly difficult and, in general, open; see \cite{Shah09,YangP20,Yang23,CY24} for recent major works.

\subsection{Background} Problem \ref{prob1} has a long history when $\mu$ is the normalized Lebesgue measure supported on $[0,1]$. In this case, Problem \ref{prob1} has been investigated by numerous people over the last decades, starting with Selberg and Zagier; see \cite{sarnak1981asymptotic} and \cite{strobergsson2013deviation,strombergsson2004uniform,flaminio2003invariant, Mar_thesis,Bur90, SarU15} that make the equidistribution result by Dani--Smillie \cite{DaniSmillie} (and also \cite{Fus73}) effective.

For absolutely continuous measures, it is known that equidistribution holds as in Problem \ref{prob1}, but nothing can be said, a priori, about the rate of convergence. In fact, the convergence could happen arbitrary slowly if the density function (coming from Radon--Nikodym theorem) does not have enough regularity. Kleinbock--Margulis, in \cite[Theorem 1.1]{KM12}, answered Problem \ref{prob1} affirmatively for absolutely continuous measures with smooth compactly supported densities in the space of matrices. Recently, Bj{\"o}rklund--Gorodnik in \cite{BG23} extended \cite{KM12} to continuous densities, among other things.

Problem \ref{prob1} is significantly difficult when the measure is not absolutely continuous. Such problem emerges from deep works by Kleinbock--Margulis \cite{KM98} and  Kleinbock--Lindenstrauss--Weiss \cite{KLW}. In \cite{KLW}, the authors show that if $\mu$ is a \textit{friendly} measures (see \cite[\S 2]{KLW} for definition)  then any weak-{$^\ast$} limit of $\frac{-1}{\log\varepsilon}\int_{\varepsilon}^1 \mu_y \d^{\times} y$ as $\varepsilon\to 0$ is a probability measure. The class of friendly measures contain the Hausdorff measures on the missing digit Cantor sets (described below).
The celebrated work Simmons--Weiss \cite{SW19} implies that for certain \emph{self-similar} measures $\mu$, one has
\begin{equation}\label{eq:average-prob1}
    \lim_{\varepsilon\to 0}\,\frac{-1}{\log \varepsilon}\int_{\varepsilon}^1\mu_y\d^\times y=m_\X,
\end{equation}
(in weak-{$^\ast$} sense) improving \cite{KLW} in this case. Note that \eqref{eq:average-prob1} is an average version of Problem \ref{prob1} in its qualitative form. In a recent breakthrough, Khalil--L{\"u}thi \cite{khalil2023random} answered Problem \ref{prob1} for a class of measures that are not necessarily absolutely continuous. In particular, \cite{khalil2023random} considered general rational \emph{iterated function system} (IFS) in higher dimensions with restrictions coming from contraction ratios and probability vectors associated to the self-similar measure.

\section{Application in Diophantine Approximation}\label{sec:Diop} 

In this section, we focus on an application of Theorem \ref{thm:fractal_withoutbase} in Diophantine approximation. Given a non-increasing monotonic positive function $\psi:\N\to\R_{+}$, we denote 
\begin{equation*}
    \mathcal{W}(\psi):=\{x\in[0,1]\mid\vert qx-p\vert<\psi(q) \text{ for infinitely many }q\in\N\text{ and }p\in\Z\}.
\end{equation*}
 Khintchine in 1926, \cite{Khint26} shows that 
\begin{equation}\label{Khincthine}
    \text{Leb}(\mathcal{W}(\psi))=\begin{cases}
0 & \text{ if }  \sum \psi(q)<\infty,\\
1 & \text{ if }  \sum \psi(q)=\infty.
    \end{cases}
\end{equation}
The following question is vaguely posed in \cite{KLW}.
\begin{problem}\label{Prob2}
    For which measures $\mu$, does the analogue of \eqref{Khincthine} hold?
\end{problem}
In the last two decades, there has been a significant development in this question for $\mu$ being a singular measure; see \cite{KM98, BKM,BBKM,B12,huang,BY23, KLW, W01, einsiedler2011diophantine, SW19,  PV, LSV07, ACY23, PVZZ22}.

A complete answer to Problem \ref{Prob2} is given in \cite{khalil2023random}, for self-similar measures $\mu$ associated to rational irreducible IFS satisfying open set condition under a certain extra condition. This extra condition in case of the natural missing digit measures (see the end of \S\ref{sec:measure-theory} for definition) amounts to restrict the Hausdorff dimension of the missing digit Cantor set to be very close to $1$. 
 
With a different approach, in a groundbreaking work \cite{yu2021rational}, Yu answers the convergence part of \eqref{Khincthine} for a class of measures $\mu$ with $\dim_{\ell^1}\mu>\frac{1}{2}$, namely:
 
 \begin{thm}\cite[Theorem 1.5]{yu2021rational}
     Let $\mu$ be a Borel probability measure with $\dl\mu>\frac{1}{2}$. Also, let $\psi$ be a non-increasing monotonic function with $\sum\psi(q)<\infty$. Then $\mu(\mathcal{W}(\psi))=0$.
 \end{thm}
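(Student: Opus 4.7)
The plan is to reduce the problem to the Borel--Cantelli lemma. Writing $\W(\psi) = \limsup_q E_q$ with $E_q := \bigcup_{p\in\Z}\{x\in[0,1] : |qx-p| < \psi(q)\}$, it suffices to establish $\sum_q \mu(E_q) < \infty$.

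The first step is to exploit the $1/q$-periodicity of $\mathbf{1}_{E_q}$: writing $\mathbf{1}_{E_q}(x) = F(qx)$ for the $1$-periodic narrow bump $F$ of half-width $\psi(q)$, Parseval on $\R/\Z$ yields
\[
\mu(E_q) \;=\; 2\psi(q) + \sum_{k\neq 0} \widehat F(k)\,\widehat\mu(-kq),
\]
with $\widehat F(0) = 2\psi(q)$ and $|\widehat F(k)| \le \min(2\psi(q),\,1/(\pi|k|))$. Crucially, only Fourier coefficients $\widehat\mu(n)$ at multiples of $q$ appear.

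Next I would invoke the hypothesis $\dl\mu > \tfrac12$: pick $s < \tfrac12$ with $s > 1 - \dl\mu$, so that $\sum_{|n|\le X}|\widehat\mu(n)| \ll X^{s}$ for all $X\ge 1$. Splitting the Fourier sum at $|k| \asymp 1/\psi(q)$, bounding the low-frequency part by $|\widehat F(k)|\le 2\psi(q)$ and handling the high-frequency tail via Abel summation against $|\widehat F(k)|\le 1/(\pi|k|)$, one obtains the termwise master bound
\[
\mu(E_q) \;\ll\; \psi(q) + \psi(q)^{1-s}\,q^{s}.
\]

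It remains to sum in $q$. The hypothesis $\sum\psi(q)<\infty$ handles the first summand. For the second, a termwise bound is not summable for every admissible $\psi$; the remedy is to interchange the order of summation over $q$ and $k$ \emph{before} estimating, rewriting the aggregate error in the form $\sum_{n\ne 0} \widehat\mu(n)\sum_{q\mid n} \widehat F_{\psi(q)}(n/q)$, and then bounding the inner divisor sum using monotonicity of $\psi$ (which forces $q\psi(q)\to 0$) together with the $X^{s}$ bound with $s<\tfrac12$. The main obstacle is exactly this final bookkeeping: the threshold $\dl\mu > \tfrac12$ is the precise point at which the divisor-weighted Fourier mass averages down against $|\widehat\mu(n)|$ rapidly enough to close, so there is no slack to spare.
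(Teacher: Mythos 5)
This theorem is cited by the paper from Yu \cite{yu2021rational}; the paper does not reprove it, so there is no internal proof to compare against. Your strategy --- Borel--Cantelli, Fourier expansion of $\mu(E_q)$, and crucially regrouping the double sum by $n=kq$ so that the divisor structure appears --- is the correct route and is in the spirit of Yu's argument.

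Two points in your sketch need attention. First, the Parseval step is not licit as stated: $\mathbf{1}_{E_q}$ is a sharp indicator whose Fourier coefficients decay only like $1/|k|$, and $\mu$ may be singular, so $\sum_k\widehat F(k)\widehat\mu(-kq)$ has no a priori reason to converge or to equal $\mu(E_q)$. The standard repair is to replace $\mathbf 1_{E_q}$ by a Beurling--Selberg (or Fej\'er-type) trigonometric \emph{majorant} $F_q^+\ge\mathbf 1_{E_q}$ of degree $\asymp 1/\psi(q)$, which has absolutely convergent Fourier series with $\widehat{F_q^+}(0)\ll\psi(q)$ and $|\widehat{F_q^+}(k)|\ll\min(\psi(q),1/|k|)$; this gives the honest inequality $\mu(E_q)\le\sum_k\widehat{F_q^+}(k)\widehat\mu(-kq)$ and leaves the remaining estimates intact.

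Second, you correctly observe that the termwise bound $\mu(E_q)\ll\psi(q)+\psi(q)^{1-s}q^s$ need not be summable (take $\psi(q)=q^{-2}$ and $1/3<s<1/2$), and that the interchange is the remedy, but you stop short of closing. For the record it does close: using $\min(a,b)\le\sqrt{ab}$ together with the fact that monotonicity of $\psi$ plus $\sum_q\psi(q)<\infty$ forces $\sup_q q\psi(q)<\infty$, one gets for each $n\neq 0$
\[
\sum_{q\mid n}\min\Bigl(\psi(q),\frac{q}{|n|}\Bigr)\ll |n|^{-1/2}\sum_{q\mid n}\sqrt{q\psi(q)}\ll |n|^{-1/2}\,\tau(|n|)\ll_\epsilon |n|^{-1/2+\epsilon},
\]
where $\tau$ is the divisor-counting function, and then partial summation against $\sum_{|n|\le X}|\widehat\mu(n)|\ll X^{s}$ with $1-\dl\mu<s<1/2$ gives $\sum_{n\neq 0}|\widehat\mu(n)|\,|n|^{-1/2+\epsilon}<\infty$ once $\epsilon<1/2-s$. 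This is exactly where the hypothesis $\dl\mu>\tfrac12$ is used, with no slack to spare, confirming your final remark.
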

 
 For the special function $\psi(q)=\frac{1}{q\log\log q}$, Yu in \cite[Theorem 1.5]{yu2021rational} also proves divergence for self-similar measure with attractor being a missing digit Cantor set. However, his methods do not yield the divergence part in general. In this paper, we address the divergence part, extending the result of \cite{yu2021rational}. As a consequence, we extend the result of Khalil--L{\"u}thi \cite[Theorem A]{khalil2023random}. 

\begin{theorem}\label{intro_thm3}
   Let $\mu$ be a self-similar measure whose underlying attractor is a shifted missing digit Cantor set as described in \S\ref{sec:measure-theory}. If $$\dl\mu>\frac{39}{64},$$ then for any $\psi$ non-increasing monotonic function, we have 
    \begin{equation*}
    \mu(\mathcal{W}(\psi))=
1 \quad \text{ if } \quad  \sum \psi(q)=\infty. 
\end{equation*}
In particular, measures $\mu_{b,D,x}$ considered in Corollary \ref{intro_thm1} satisfy the above conclusion.
\end{theorem}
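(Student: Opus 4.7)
The plan is to deduce Theorem \ref{intro_thm3} from the effective equidistribution in Corollary \ref{intro_thm1} via the standard translation to a shrinking-target problem on $\X$, combined with a quantitative Borel--Cantelli scheme. For each dyadic $Q\ge 1$, set
$$B_Q:=\left\{x\in[0,1]\mid\exists\, q\in[Q,2Q),\ p\in\Z\text{ with }|qx-p|<\psi(q)\right\},$$
so that $\mathcal{W}(\psi)\supset\limsup_{Q\to\infty}B_Q$. Via the Dani correspondence, $B_Q$ coincides (up to a boundary layer that can be made negligible) with $\{x\in[0,1]\mid n(x)g_{y_Q}\in\Omega_Q\}$ for $y_Q\asymp Q^{-2}$ and a suitable cusp region $\Omega_Q\subset\X$ of Haar measure $m_\X(\Omega_Q)\asymp\sum_{Q\le q<2Q}\psi(q)$.

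The next step is to compute $\mu(B_Q)$. Approximating $\mathbf{1}_{\Omega_Q}$ by a smoothed test function $\phi_Q\in C_\b^{2\ell}(\X)$ and applying Corollary \ref{intro_thm1} yields
$$\mu(B_Q)=m_\X(\phi_Q)+O_\mu\bigl(y_Q^{\eta}\, S_{\infty,\ell}(\phi_Q)\bigr)+(\text{smoothing error}).$$
A careful choice of the smoothing scale, trading off the Sobolev norm $S_{\infty,\ell}(\phi_Q)$ against the rate $\eta\in(0,\dl\mu-\tfrac{39}{64})$, makes both error terms negligible relative to $m_\X(\phi_Q)\asymp\sum_{Q\le q<2Q}\psi(q)$. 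Summing over a lacunary sequence $(Q_j)$ and using $\sum_q\psi(q)=\infty$ then gives $\sum_j\mu(B_{Q_j})=\infty$.

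For the divergent Borel--Cantelli argument we also need quasi-independence, namely $\mu(B_{Q_j}\cap B_{Q_k})\le(1+o(1))\,\mu(B_{Q_j})\mu(B_{Q_k})$ for $k>j$. Since $B_{Q_j}$ is a union of short intervals around rationals with denominator in $[Q_j,2Q_j)$, we write $\mu|_{B_{Q_j}}$ as a superposition of localised pieces and apply the uniform-in-$x_0$ equidistribution of Corollary \ref{cor:convol-infty} on each piece at the larger scale $y_{Q_k}$. Combined with the self-similar structure of $\mu_{b,D,x}$ (which ensures that the localisations are comparable to $\mu$-type measures with controlled $\ell^1$-dimension), this yields the required product bound. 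The Chung--Erd\H{o}s inequality then gives $\mu(\limsup_j B_{Q_j})>0$; finally, since $\mathcal{W}(\psi)$ is a tail event for the base-$b$ shift on the underlying missing-digit Cantor set and $\mu$ is ergodic under this shift, positivity is upgraded to $\mu(\mathcal{W}(\psi))=1$.

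The main obstacle is the tension between the quantitative rate $\eta$ in Theorem \ref{thm:fractal_withoutbase} and the Sobolev norms of $\phi_Q$: the region $\Omega_Q$ reaches depth $\sim \psi(Q)^{-2}Q^{-2}$ into the cusp at scale $y_Q\asymp Q^{-2}$, so its smoothed indicator has Sobolev norm growing polynomially in $Q$ and $1/\psi(Q)$, and one must ensure that the error $y_Q^{\eta}S_{\infty,\ell}(\phi_Q)$ stays smaller than the main term $\sim\psi(Q)$ even when $\psi$ is only just divergent (e.g.\ $\psi(q)\asymp 1/(q\log q)$). A secondary delicate point is the short-range quasi-independence, handled by taking the $(Q_j)$ sufficiently lacunary and by exploiting the self-similarity of $\mu$ directly.
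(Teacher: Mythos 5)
Your proposal is not the route the paper takes, and it has a genuine gap that the paper flags explicitly.

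The paper's actual proof is a short reduction: it first invokes Lemma \ref{lemma:two dim same} to show that for shifted missing digit measures $\dl\mu=\Dl\mu$ (the latter being the dimension \eqref{def:lp-dim-mu-star} with a supremum over the fractional shift $\theta$), then feeds $\Dl\mu>\tfrac{39}{64}$ into Proposition \ref{prop:fractal}, and finally hands the effective equidistribution statement of Proposition \ref{prop:fractal} to the black-box divergence theorem \cite[Theorem 12.1]{khalil2023random} via Proposition \ref{thm3}. The Borel--Cantelli, quasi-independence, Chung--Erd\H{o}s, and zero-one-law machinery you sketch is not re-derived in the paper; it is imported wholesale from Khalil--L\"uthi.

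The gap in your argument is precisely the one the paper warns about right after Theorem \ref{intro_thm3}: ``Theorem \ref{thm:fractal_withoutbase} is not enough to imply Theorem \ref{intro_thm3}. One needs a stronger effective equidistribution result Proposition \ref{prop:fractal} with any starting point, not only the identity.'' In your quasi-independence step you localise $\mu$ to short intervals around rationals $p/q$ with $q\in[Q_j,2Q_j)$ and then apply Corollary \ref{cor:convol-infty}, which controls $\int\phi(n(x_0+x)a(y))\d\mu(x)$ uniformly in a unipotent translate $x_0$. But conditioning $\mu$ to an interval of length $\sim 1/q$ around $p/q$ and re-expanding is the same as looking at the orbit from the base point $g_0=n(x_0)a(q^{-1})$, i.e. a geodesic shift by $\log q$, not merely a unipotent translate. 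Corollary \ref{cor:convol-infty} does not cover this: the relevant statement is Proposition \ref{prop:fractal}, whose proof requires the stronger hypothesis $\Dl\mu>\tfrac{39}{64}$ (because the Fourier frequencies that appear are $m/q$, i.e.\ non-integers, forcing the supremum over $\theta$ in \eqref{def:lp-dim-mu-star} via Lemma \ref{lem:averaging-mu-hat-q}), and whose conclusion carries the extra factor $q^b$ quantifying the loss. Your phrase ``the localisations are comparable to $\mu$-type measures with controlled $\ell^1$-dimension'' glosses over exactly this point: for a self-similar measure with contraction $1/b$, localisations are affine copies of $\mu$ only at scales $b^{-L}$, not at arbitrary $1/q$, and even in the aligned case the quantitative degradation in $q$ must be tracked and absorbed by the divergence of $\sum\psi(q)$ — this is what \cite[Theorem 12.1]{khalil2023random} is engineered to do. Finally, your closing zero-one law ($\mathcal{W}(\psi)$ being a ``tail event for the base-$b$ shift'') is also nontrivial: $\mathcal{W}(\psi)$ is not invariant under $x\mapsto bx\bmod 1$ in any obvious sense, and establishing the required $0$--$1$ law for self-similar measures is itself part of the Khalil--L\"uthi machinery rather than something one gets for free from ergodicity.
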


We remark that assuming the Ramanujan--Petersson Conjecture we can improve Theorem \ref{intro_thm3} by weakening the hypothesis to $\dl\mu>\tfrac{1}{2}$. We also note that Theorem \ref{thm:fractal_withoutbase} is not enough to imply Theorem \ref{intro_thm3}. One needs a stronger effective equidistribution result Proposition \ref{prop:fractal} with any starting point, not only the identity.

\section{Fourier Theoretic Preliminaries}\label{sec:measure-theory}

Let $\mu$ be a Borel probability measure on $\R$. Given two Borel probability measures $\mu_1$ and $\mu_2$ we denote their \emph{convolution} by
$$\mu_1\ast\mu_2(f) = \int f(x+y)\d\mu_1(x)\d\mu_2(y),\quad f\in C(\R).$$
We define the Fourier transform of $\mu$ by
\begin{equation}\label{hatmu}\widehat{\mu}(\xi):=\int e(\xi x)\d\mu(x);\quad e(z):=\exp(2\pi i z), z\in\C.\end{equation}
If $\mu$ is compactly supported then $\widehat{\mu}$ is a bounded Lipschitz continuous function. Note that $\widehat{\mu_1\ast\mu_2}=\widehat{\mu}_1\widehat{\mu}_2$.
It follows from the classical Riemann--Lebesgue lemma that any absolute continuous measure (w.r.t the Lebesgue measure) $\widehat\mu(\xi)\to 0$ as $\vert \xi\vert\to\infty$. However, it is often extremely difficult to quantify the decay rate of $\widehat\mu$ in general.

On the other hand, when $\mu$ is not absolutely continuous,  $\widehat\mu$ may not have any decay.
For instance, if $\mu$ is the Hausdorff measure supported on the middle-third Cantor set then $\widehat\mu(\xi)\nrightarrow 0$, i.e., $\widehat\mu$ has no decay. We refer readers to \cite{Varju_ICM, Tuo23, Pablo_Survey19, strichartz1990selfsimilar} for surveys on behaviour of Fourier transform in case of various singular measures.

In this paper, we mainly consider the Fourier decay in an $\ell^p$-average sense. Following \cite[Definition 1.2]{yu2021rational}, we define, 
\begin{equation}\label{def:lp-dim-mu}
    \dim_{\ell^p}\mu:=1-\inf\left\{\ell \mid\lim_{X\to\infty}X^{-\ell}\sum_{|m|\le X}|\widehat{\mu}(m)|^p = 0\right\}.
\end{equation}
The set on the right-hand side above is non-empty as trivially any $\ell>1$ belongs to this set. Moreover, this definition is equivalent to those in \cite[Definition 1.2]{yu2021rational}. Indeed, this follows from the fact for any sequence $\{A(X)\}_{X\in\N}$ of positive reals
$$A(X) = o_\epsilon(X^\epsilon)\, \forall\epsilon>0\iff A(X) = O_\epsilon(X^\epsilon)\, \forall\epsilon>0.$$
Using Cauchy--Schwarz inequality and definitions, as noted in \cite[Lemma 1.4]{yu2021rational}, we have
\begin{equation*}
    \tfrac{1}{2}\dim_{l^2}\mu\leq \dl\mu\le \dim_{l^2}\mu.
\end{equation*}
For $s>0$ we call a Borel measure $\mu$ to be \emph{$s$-AD regular} if there exists a $C>1$ such that for any $x\in\supp(\mu)$ and sufficiently small $r>0$ one has
$$C^{-1}r^s\le\mu((x-r,x+r))\le Cr^s.$$
By \cite[\S 3.8]{mattila2015fourier}, if $\mu$ is $s$-AD regular then, 
\begin{equation}\label{eqn:l2=s}
\dim_{\ell^2}\mu=s.
\end{equation}
We define a variant of the above Fourier $\ell^p$-dimension, following \cite[Definition 2.2]{yu2023missing}. For $1\le p <\infty$ we define
\begin{equation}\label{def:lp-dim-mu-star}
    \dim^{\star}_{\ell^p}\mu:=1-\inf\left\{\ell \mid\lim_{X\to\infty}X^{-\ell}\sup_{0\le \theta\le 1}\sum_{|m|\le X}|\widehat{\mu}(m+\theta)|^p = 0\right\}.
\end{equation}
It is immediate to check that 
\begin{equation}\label{eqn:relation_bet_dimen}
    \dim^{\star}_{\ell^p}\mu\leq \dim_{\ell^p}\mu.
\end{equation}
We remark that, for $p=1$ a self-similar measure whose attractor is a missing digit Cantor set (see below), the above two dimensions will coincide; see Lemma \ref{lemma:two dim same}.

\vspace{5mm}

We refer the readers to  \cite{mattila2015fourier} for definition of \textit{self-similar} measure, \emph{iterated function system} (IFS) and \textit{open set condition (OSC)} of an IFS. A self-similar measure comes with an IFS and a probability vector $\lambda.$ If an IFS $\mathcal{F}$ has OSC, then for $\lambda_j=\rho_j^s,$ where $\rho_j$'s are contraction ratios, the corresponding self-similar measure $\mu$ is the \emph{$s$-dimensional Hausdorff measure} restricted to the \emph{attractor} $K$ of $\mathcal{F}$ and $\mu$ is $s$-AD regular; see \cite[Theorem 4.14]{mattila1995geometry}.

Let $b\geq 3$ and $\varnothing\neq D\subset\{0,\cdots, b-1\}$ with $\# D=l\ge 2$. Let $\mathcal{F}_{b,D}$ denote the IFS $\{f_i(x):=\frac{x+i}{b} \mid i\in D\}$. The IFS $\mathcal{F}_{b,D}$ satisfies OSC and the attractor of $\mathcal{F}_{b,D}$ is $K_{b,D}$, the \emph{missing digit Cantor set} with base $b$ and (appearing) digits from $D$. Given any probability vector, there exists an unique self-similar measure whose attractor is $K_{b,D}$. We refer to them as self-similar measures for which the attractor is a missing digit Cantor set. In particular, when the probability vector is uniform probability vector, we call the self-similar measure to be natural missing digit measure $\mu_{b,D}.$ Note $\mu_{b,D}$ is also the $s=\frac{\log l}{\log b}$-dimensional Hausdorff measure restricted on $K_{b,D}$ (which we call by natural missing digit measure).

For $x_0\in\R$ it can be easily checked that $K_{b,D,x_0}:=K_{b,D}+x_0$, called as \emph{shifted missing digit Cantor set}, is the attractor of the IFS
\begin{equation}\label{eq:shifted-IFS}
    \mathcal{F}_{b,D,x_0}:=\left\{f_i(x):=\frac{x+i}{b}+x_0(1-\tfrac{1}{b})\mid i\in D\right\}
\end{equation}
and the convolution
\begin{equation}\label{eq:haus-shifted-IFS}
    \mu_{b,D,x_0}:=\mu_{d,D}\ast\delta_{x_0}
\end{equation}
is the self-similar measure on $K_{b,D,x_0}$ associated with uniform probability vector. It is easy to check that $\mathcal{F}_{b,D,x_0}$ satisfies OSC.

\section{Automorphic Preliminaries}

\subsection{Convention}

The letter $\epsilon$ will denote placeholders for a small positive number. In particular, the exact value of $\epsilon$ may change from line to line. We will make the same convention with the \emph{order of the Sobolev norm} (denoted by $d$ or $\ell$) which is thought as an implicit positive integer. Moreover, we will write $\ll x^{O_P(1)}$ to denote $\ll x^d$ for some unspecified positive $d$ depending on $P$ as $x\to\infty$.

\subsection{Groups and measures}

Let $G:=\PGL_2(\R)$, $\Gamma:=\PGL_2(\Z)$ and $[G]:=\Gamma\backslash G$. Throughout the paper we will identify $\X$ with $[G]$ by means of, e.g., \cite[Lemma 3.1]{khalil2023random}. We denote 
$$n(x):=\begin{pmatrix}1&x\\&1\end{pmatrix},\quad a(y):=\begin{pmatrix} y&\\&1\end{pmatrix}.$$
Via the above identification we rewrite $\mu_y$, as in \eqref{eq:def-mu-y}, for a Borel probability measure $\mu$ as
$$\mu_y(\phi)=\int\phi(n(x)a(y))\d\mu(x),\quad\phi\in C(\X).$$
Let $K:=\mathrm{SO}_2(\R)$, which is a compact subgroup of $G$. We will abbreviate $\exp(2\pi i z)$ by $e(z)$ for any $z\in\C$.

We fix measures $\d x$ on $N:=\{n(x)\mid x\in\R\}$ and $\d^\times y:=\frac{\d y}{|y|}$ on $A:=\{a(y)\mid y\in\R^\times\}$. Correspondingly, we can write a Haar measure on $G$ in the Iwasawa coordinates $g=n(x)a(y)k$ as $\d g=\d x\frac{\d^\times y}{|y|}\d k$ where $\d k$ is the probability Haar measure on $K$. We use $m_{[G]}$ (which is the same as $m_\X$) to denote the invariant probability measures on on $[G]$.

\subsection{Sobolev norm}\label{sobol}

We refer to \cite[Chapter 2]{bump1997automorphic} for details. Let $\{X,Y,Z\}$ be an orthonormal basis of $\mathrm{Lie}(G)$ where the orthonormality is with respect to the Killing form on $\mathrm{Lie}(G)$ and $Z$ is a non-zero element in $\mathrm{Lie}(K)$. We define a Laplacian of $G$ by the formula
\begin{equation}\label{def:diff-op}    
    \D:=\mathrm{Id}-X^2-Y^2-Z^2=\mathrm{Id}-C_G+2C_K
\end{equation}
where $C_G:=X^2+Y^2-Z^2$ is the Casimir operator on $G$ and $C_K:=-Z^2$ is the Casimir operator on $K$

Let $\pi$ be a unitary representation of $G$. It is known that (see, e.g., \cite[proof of Theorem 3]{nelsona1959analytic}) $\D\vert_{\pi}$ is a densely-defined, self-adjoint, invertible, positive definite, second-order elliptic differential operator. For $d\in\Z_{\ge 0}$ we define the $d$'th Sobolev norm on $\pi$ by
$$S_{2,d}(v) := \|\D^d v\|_\pi,\quad \text{smooth }v\in\pi $$
and extend the definition to $\pi$ by density of the smooth vectors in $\pi$.

The Casimir operator $C_G$ lies in the center of the universal enveloping algebra of $G$. Thus if $\pi$ is also irreducible then $C_G$ acts on $\pi$ by a scalar $\nu_\pi$. On the other hand, if $v\in\pi$ is a $K$-type vector (i.e.\ $K$ acts on $v$ by a character) of weight $k\in\Z$ then $C_K$ also acts on $v$ by a scalar $\nu_k$. Consequently, it follows from \eqref{def:diff-op} that $\D$ acts on a weight-$k$ vector $v\in\pi$ by a scalar. Thus a $\D$-eigenbasis of $\pi$ is the same as a $K$-isotypic basis of $\pi$.

\begin{lemma}\label{lem:trace-class-pi}
    For every $\ell>0$ there exists an $\ell'>0$ such that we have $$\mathrm{trace}\left(\D^{-\ell'}\vert_\pi\right)\ll (1+|\nu_\pi|)^{-\ell}$$
    for any unitary irreducible $\pi$.
\end{lemma}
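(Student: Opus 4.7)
The plan is to diagonalise $\D$ via the $K$-isotypic decomposition of $\pi$ and bound the resulting sum of inverse eigenvalues. Since each $K$-type of $K = \mathrm{SO}_2(\R)$ appears with multiplicity at most one in any irreducible unitary $\PGL_2(\R)$-representation, I can index an orthonormal basis of $\pi$ by its $K$-spectrum $\Sigma_\pi \subset \Z$, writing $v_k$ for the (unique up to scalar) weight-$k$ vector. By the discussion preceding the lemma, $\D v_k = \lambda_k v_k$ with $\lambda_k := 1 - \nu_\pi + 2 \nu_k$, where $\nu_k \asymp k^2$ because $Z$ acts by a pure imaginary scalar proportional to $k$ on $v_k$. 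Thus
$$\mathrm{trace}(\D^{-\ell'}|_\pi) = \sum_{k \in \Sigma_\pi} \lambda_k^{-\ell'}.$$

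The key step is the uniform lower bound $\lambda_k \gg 1 + |\nu_\pi| + k^2$ for all $k \in \Sigma_\pi$. When $|\nu_\pi| = O(1)$ (complementary series, trivial representation, low-weight discrete series) this follows trivially from $\D \ge \mathrm{Id}$ together with $\nu_k \asymp k^2$. When $|\nu_\pi|$ is large, I would use the classification of the unitary dual of $\PGL_2(\R)$: for tempered principal series $\nu_\pi < 0$ with $|\nu_\pi|$ large, so that $1 - \nu_\pi \gg 1 + |\nu_\pi|$; for discrete series of weight $n \gg 1$ one has $|\nu_\pi| \asymp n^2$ and the minimal $K$-type satisfies $|k| = n$ with $\nu_n \asymp n^2 \asymp |\nu_\pi|$, so reindexing the $K$-types of $\Sigma_\pi$ as $\pm(n+2j)$ with $j \ge 0$ gives $\lambda_{\pm(n+2j)} \gg |\nu_\pi| + j^2$, which feeds into the summation bound below in the same form.

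Armed with this bound, I would finish by comparison with an integral:
$$\sum_{k \in \Sigma_\pi} \lambda_k^{-\ell'} \ll \sum_{k \in \Z} (1 + |\nu_\pi| + k^2)^{-\ell'} \ll \int_\R (1 + |\nu_\pi| + x^2)^{-\ell'} \d x \ll (1 + |\nu_\pi|)^{-\ell' + 1/2},$$
valid for any $\ell' > 1/2$, via the substitution $x = (1 + |\nu_\pi|)^{1/2} u$. Taking $\ell' \ge \ell + \tfrac{1}{2}$ then yields the desired bound on the trace.

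The main obstacle is the uniform lower bound $\lambda_k \gg 1 + |\nu_\pi|$: the crude estimate $\lambda_k \ge 1$ coming from the positivity of $\D$ is not sharp enough by itself, and upgrading it requires case-by-case input from the structure of the unitary dual of $\PGL_2(\R)$ — more precisely, the fact that whenever $|\nu_\pi|$ is large, either the Casimir eigenvalue is itself negative and large (principal series) or the minimal $K$-type already has size comparable to $\sqrt{|\nu_\pi|}$ (discrete series). Everything else reduces to a clean one-variable integral estimate.
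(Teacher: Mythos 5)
Your argument is correct and essentially reproduces the paper's proof: diagonalise $\D$ over the $K$-isotypic basis $\{v_k\}$, use $\D v_k=(1-\nu_\pi+2\nu_k)v_k$ together with $\nu_k\asymp k^2$, and bound the resulting sum of inverse eigenvalues. The only difference is cosmetic — you spell out the uniform lower bound $1-\nu_\pi+2\nu_k\gg 1+|\nu_\pi|+k^2$ via the classification of the unitary dual, which the paper leaves implicit when it passes directly to the factored estimate $\ll(1+|\nu_\pi|)^{-\ell'/2}(1+|k|)^{-\ell'}$, and you close with an integral comparison where the paper simply sums $(1+|k|)^{-\ell'}$.
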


\begin{proof}
    Let $\{v_k:k\in\Z\}$ be an orthonormal $K$-type basis of $\pi$ where $v_k$ is a weight $k$-vector. Thus, from the above discussion it follows that
    $$\mathrm{trace}\left(\D^{-\ell'}\vert_\pi\right) = \sum_{k\in\Z}(1-\nu_\pi+2\nu_k)^{-\ell'}.$$
    We readily check that $\nu_k$ is a positive constant multiple of $k^2$. On the other hand, from \cite[Proposition 2.5.4]{bump1997automorphic}, it follows that if $\pi$ is a principal series with parameter $\mu$ (so that $|\Re(\mu)|\le \tfrac{1}{2}$) or a discrete series of weight $q$ (so that $0<q\le |k|$) then $\nu_\pi$ is a positive constant multiple of $\mu^2$ or $q(2-q)$, respectively. Thus in either case the above sum is bounded by
    $$\ll \sum_{k\in\Z}(1+|\nu_\pi|)^{-\frac{\ell'}{2}}(1+|k|)^{-\ell'}.$$
    Making $\ell'$ sufficiently large we conclude.
\end{proof}

Finally, for $0\le \ell\le \infty$, we define
$$C_\b^{2\ell}([G]):=\{\phi\in C^{2\ell}([G])\mid S_{\infty,\ell}(\phi):=\|\D^\ell\phi\|_{L^\infty([G])}<\infty\}.$$
Note that for every $\ell>0$ and for every $\phi\in C_\b^{2\ell}([G])$ we have
$$S_{\infty,\ell}(\phi)\ll\mathcal{S}_{\infty,\ell}(\phi)$$
where $\mathcal{S}_{\infty,\ell}$ is the Sobolev norm defined in \cite[eq.(3.6)]{khalil2023random}.

\subsection{Automorphic forms and representations}

We refer to \cite{cogdell1990arithmetic} for details. In this section, $\pi$ will denote a standard non-trivial unitary automorphic representation for $\Gamma$, that is, $\pi$ is infinite dimensional and appears in the spectral decomposition of $L^2([G])$; see Lemma \ref{lem:spectral-decomposition}. Then $\pi$ is either cuspidal or a unitary Eisenstein series. By $\varphi\in\pi$ (cuspidal or an Eisenstein series) we will denote an automorphic form on $[G]$.

\subsubsection{Unitary structure}
If $\pi$ is cuspidal then we fix a $G$-invariant inner product on $\pi$ by 
$$\|\varphi\|^2_\pi:=\|\varphi\|^2_{[G]}=\int_{[G]}|\varphi(g)|^2\d m_{[G]}.$$

If $\pi$ is unitary Eisenstein then any $\varphi\in\pi$ is of the form
$$\varphi=\Eis(f),\quad f\in \I(s):=\mathrm{Ind}_{NA}^G |\cdot|^s,\quad s\in i\R;$$
where for $\Re(s)>\tfrac{1}{2}$ we define 
$$\Eis(f):=\sum_{\gamma\in N\cap\Gamma\backslash\Gamma}f(\gamma\cdot).$$
The sum converges absolutely for $\Re(s)>\tfrac{1}{2}$ and can be meromorphically continued to all $s\in\C$.
Note that $f$ satisfies
\begin{equation}
    f(n(x)a(y)g) = |y|^{\frac{1}{2}+s} f(g),\quad x\in \R, y\in\R^\times,g\in G.
\end{equation}
Thus any such $f$ is determined by $f\vert_K$ by Iwasawa decomposition. If $s\in i\R$ then $\I(s)$ is unitary. In this case, we fix a $G$-invariant inner product on $\I(s)$ by
$$\|f\|^2_{\I(s)}:=\int_K|f(k)|^2\d k$$
and a $G$-invariant inner product on $\pi$ by
$$\|\Eis(f)\|_\pi = \|f\|_{\I(s)}.$$
Through out the paper we only work with $f\in\I(s)$ such that $f\vert_K$ is independent of $s$. In this paper, if not mentioned otherwise, we assume that $f\vert_K$ is $s$-independent and a $K$-type vector. In this case, $\Eis(f)$ is holomorphic in the region $0\le\Re(s)<\tfrac{1}{2}$.

\subsubsection{Constant term}
We define the constant term of $\varphi$ by 
$$\varphi_0(g):=\int_{\R/\Z}\varphi(n(x)g)\d x.$$
If $\varphi$ is cuspidal then $\varphi_0=0$. If $\varphi=\Eis(f)$ then (see \cite[eq.(6)]{jana2021joint})
\begin{equation}\label{eq:constant-term}
    \varphi_0(g)=f(g)+\frac{\zeta(2s)}{\zeta(1+2s)}M(s)f(g),
\end{equation}
where $\zeta$ is the Riemann zeta function and $M(s)$ is the standard intertwiner mapping $\I(s)\to\I(-s)$, defined by
$$M(s)f(g):=\int_{\R}f(wn(x)g)\d x,\quad w:=\begin{pmatrix}&1\\1&\end{pmatrix}.$$
The above integral converges absolutely for $\Re(s)>0$ and has meromorphic continuation to all of $\C$.

\begin{lemma}\label{lem:constant-term-expression}
    Let $\varphi=\Eis(f)$ for $f\in\I(s)$. Then for all $x\in \R$, $y\in \R^\times$, and $k\in K$ we have
    $$\varphi_0(n(x)a(y)k)=\varphi_0(a(y)k)  =|y|^{\frac{1}{2}}\left(|y|^sf(k)+\frac{\zeta(2s)}{\zeta(1+2s)}|y|^{-s}M(s)f(k)\right).$$
    In particular, there exists a $d>0$ so that for $\Re(s)=0$
    $$\varphi_0(n(x)a(y)k)\ll |y|^{\frac{1}{2}}S_{2,d}(\varphi)$$
    uniformly in $x$ and $k$.
\end{lemma}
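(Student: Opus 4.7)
The first equality follows by left-$N$-invariance of the defining integral: the change of variable $x\mapsto x+x'$ in $\int_{\R/\Z}\varphi(n(x+x')a(y)k)\,dx$ gives $\varphi_0(n(x')a(y)k)=\varphi_0(a(y)k)$. For the second equality, substitute $g=a(y)k$ into \eqref{eq:constant-term} and apply the transformation rules $f(a(y)k)=|y|^{1/2+s}f(k)$ (from $f\in\I(s)$) and $M(s)f(a(y)k)=|y|^{1/2-s}M(s)f(k)$ (from $M(s)f\in\I(-s)$). Factoring out $|y|^{1/2}$ yields the stated formula.

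For the uniform bound on the tempered line $\Re(s)=0$, note $||y|^{\pm s}|=1$, so the only $y$-dependence is the overall $|y|^{1/2}$. It suffices to bound each of $|f(k)|$, $|M(s)f(k)|$, and $|\zeta(2s)/\zeta(1+2s)|$ by $S_{2,d}(\varphi)$ up to polynomial loss in $|\nu_\pi|$. Such polynomial loss is absorbed into the Sobolev norm by inflating $d$, since $S_{2,d}(\varphi)\gg (1+|\nu_\pi|)^{d}\|\varphi\|_\pi$ on each $K$-type on which $\D$ acts nontrivially.

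Expand $f|_K=\sum_n c_n\psi_n$ in an orthonormal $K$-isotypic basis of $\I(s)$; by hypothesis this expansion is $s$-independent, and $\D$ acts on the weight-$n$ subspace by the scalar $1-\nu_\pi+2\nu_n\asymp 1+|\nu_\pi|+n^2$ using $\nu_n\asymp n^2$. Hence $S_{2,d}(\varphi)^2\asymp \sum_n|c_n|^2(1+|\nu_\pi|+n^2)^{2d}$. A Cauchy--Schwarz estimate against $\sum_n(1+n^2)^{-2d'}<\infty$ for $d'>1/4$ yields
\[
|f(k)|\le \sum_n|c_n|\ll S_{2,d}(\varphi)
\]
for $d$ sufficiently large. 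Because $M(s)$ commutes with the right $K$-action and each $K$-isotype is one-dimensional, $M(s)$ acts on $\psi_n$ by a scalar $\lambda_n(s)$; unitarity of $M(s)$ on $\Re(s)=0$ forces $|\lambda_n(s)|=1$, so the same argument gives $|M(s)f(k)|\ll S_{2,d}(\varphi)$.

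Finally, classical estimates (convexity for $\zeta(2s)$ and the Vinogradov--Korobov non-vanishing region for $\zeta(1+2s)$) yield $|\zeta(2s)/\zeta(1+2s)|\ll (1+|s|)^{O(1)}\ll (1+|\nu_\pi|)^{O(1)}$ on $\Re(s)=0$, which is absorbed by $S_{2,d}(\varphi)$ after enlarging $d$. The proof is essentially a direct unfolding of \eqref{eq:constant-term} combined with Sobolev embedding on the circle $K$; the only mild subtlety, and the main bookkeeping task, is ensuring that the $s$-dependence of the $\zeta$-ratio and of $M(s)$ is uniformly controlled, which is why we allow polynomial loss in $|\nu_\pi|$ and pay for it by raising the Sobolev order.
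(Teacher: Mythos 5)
Your derivation of the formula $\varphi_0(a(y)k)=|y|^{1/2}\left(|y|^sf(k)+\frac{\zeta(2s)}{\zeta(1+2s)}|y|^{-s}M(s)f(k)\right)$ is correct and matches the paper. However, your argument for the uniform bound on $\Re(s)=0$ contains a genuine gap: the claim that ``unitarity of $M(s)$ on $\Re(s)=0$ forces $|\lambda_n(s)|=1$'' is false. The \emph{unnormalized} intertwiner $M(s)$ is not unitary on the critical line; indeed, $\Gamma_\R(2s)=\pi^{-s}\Gamma(s)$ has a pole at $s=0$, and $M(s)$ inherits this pole. It is only the normalized intertwiner $M^\ast(s)=\frac{\Gamma_\R(1-2s)}{\Gamma_\R(2s)}M(s)$ that is an isometry (this is \eqref{eq:intertwiner-isometry} in the paper). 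Consequently the estimate $|M(s)f(k)|\ll S_{2,d}(\varphi)$ \emph{uniformly} on $\Re(s)=0$ fails for $s$ near $0$, and your device of absorbing polynomial loss into the Sobolev norm by inflating $d$ cannot rescue it: near $s=0$ the Laplace eigenvalue $|\nu_\pi|$ is bounded, so $(1+|\nu_\pi|)^d$ is $O(1)$ and provides no compensation for the pole of $M(s)$.

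The paper avoids this by a cancellation you did not exploit: applying the functional equation of $\zeta$ gives
$$\frac{\zeta(2s)}{\zeta(1+2s)}\,M(s)f(k)=\frac{\zeta(1-2s)}{\zeta(1+2s)}\cdot\frac{\Gamma_\R(1-2s)}{\Gamma_\R(2s)}M(s)f(k)=\frac{\zeta(1-2s)}{\zeta(1+2s)}\,M^\ast(s)f(k),$$
and on $\Re(s)=0$ the factor $\zeta(1-2s)/\zeta(1+2s)$ has modulus exactly $1$ (the two arguments are complex conjugates), while $M^\ast(s)f(k)$ is controlled by $S_{2,d}(\varphi)$ via the unitary normalized intertwiner. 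The pole of $\zeta(1+2s)$ at $s=0$ (which makes $\zeta(2s)/\zeta(1+2s)$ vanish there) precisely cancels the pole of $M(s)$ at $s=0$; bounding the two factors separately, as you do, destroys this cancellation. Your secondary observation that $|\zeta(2s)/\zeta(1+2s)|$ is polynomially bounded via convexity and Vinogradov--Korobov is true but orthogonal to the actual difficulty, which sits in the intertwiner. To fix your argument, replace the direct bound on $M(s)f(k)$ by the combination $\frac{\zeta(2s)}{\zeta(1+2s)}M(s)=\frac{\zeta(1-2s)}{\zeta(1+2s)}M^\ast(s)$ and use the isometry property of $M^\ast(s)$.
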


\begin{proof}
    The formula of $\varphi_0$ and left $N$-invariance of it follow immediately from \eqref{eq:constant-term}, and the facts that $f\in\I(s)$ and consequently, $M(s)f\in\I(-s)$.
    To see the next estimate, we first use the functional equation of the Riemann zeta function:
    $$\frac{\zeta(2s)}{\zeta(1+2s)}=\frac{\Gamma_{\R}(1-2s)}{\Gamma_\R(2s)}\frac{\zeta(1-2s)}{\zeta(1+2s)},\quad \Gamma_\R(s):=\pi^{-s/2}\Gamma(s/2).$$
    Note that
    $$\left|\frac{\zeta(1-2s)}{\zeta(1+2s)}\right|=1,\quad \Re(s)=0.$$
    Then the claim follows from the fact that
    $$f(k),\quad \frac{\Gamma_\R(1-2s)}{\Gamma_\R(2s)}M(s)f(k)\ll S_{2,d}(\varphi);$$
    see \cite[eq.(7),\,\S2.2]{jana2021joint} and the discussion around it.
\end{proof}

\subsubsection{Fourier coefficients}\label{sec:fourier-coefficient}
For any cuspidal or unitary Eisenstein series $\pi\ni\varphi$, one has the Fourier expansion
\begin{equation}\label{eq:fourier-expansion}
    \varphi(g) - \varphi_0(g) = \sum_{m\neq 0}\frac{\lambda_\pi(|m|)}{\sqrt{|m|}}W_\varphi(a(m)g).
\end{equation}

Let $\pi$ be cuspidal and $\varphi\in\pi$. In this paper, we always use the normalized $\varphi$ with $\lambda_\pi(1)=1$.
In this case, $\lambda_\pi$ are the Hecke eigenvalues attached to $\pi$. The \emph{Whittaker function} $W_\varphi$ in this case is given by
$$W_\varphi(g):=\int_{\R/\Z}\varphi(n(x)g)e(-x)\d x.$$
The expression \eqref{eq:fourier-expansion} follows from Shalika's multiplicity one theorem; see \cite[\S4.1 and \S6.2]{cogdell1990arithmetic}.
We have the following bound
\begin{equation}\label{eq:bound-hecke-cusp}
\lambda_\pi(|m|)\ll |m|^{\vartheta},\quad \vartheta >\frac{7}{64},
\end{equation}
for all $m\in\Z_{\neq 0}$ and uniformly in $\pi$; see \cite[Proposition 2]{kim-sarnak}.
The Ramanujan--Petersson Conjecture predicts that the above estimate of $\lambda_\pi$ can be improved to $|m|^\epsilon$ for any $\epsilon>0$. However, this is only known when $\pi$ corresponds to a modular form (that is, a discrete series representation).

If $\pi$ is an Eisenstein series with parameter $s\in\C$ and $\varphi=\Eis(f)$ then the Whittaker function $W_\varphi$ is given by
$$W_\varphi(g) := W_f(g):=\int_\R f(wn(x)g)e(-x)\d x$$
which converges absolutely for $\Re(s)>0$ and can be analytically continued to all of $\C$. In particular, when $f$ varies over a family such that $f\vert_K$ is $s$-independent (so called \emph{flat family)} then $W_f$ is entire in $s$. In this case, we have
\begin{equation}\label{eq:formula-hecke-eis}
    \lambda_\pi(|m|)= \frac{|m|^{-s}\tau_{2s}(|m|)}{\zeta(1+2s)},\quad \tau_z(m):=\sum_{d\mid m}d^z,
\end{equation}
for all $m\in\Z_{\neq 0}$; see \cite[\S2.1]{jana2021joint} for details of the proof.
Consequently, for $\Re(s)=0$ it follows from the lower bound of $\zeta$ on $\Re(s)=1$ that (see \cite[pp.50-51]{Tit86} and \cite[Proposition 2.5.4]{bump1997automorphic})
\begin{equation}\label{eq:bound-hecke-eis}
\lambda_\pi(|m|)\ll_\epsilon \left(|m|(1+|s|)\right)^\epsilon \asymp \left(|m|(1+|\nu_\pi|)\right)^\epsilon.
\end{equation}
Finally, the Ramanujan--Petersson Conjecture is known \emph{on average} for any cuspidal or unitary Eisenstein series. That is,
\begin{equation}\label{eq:average-GRC}
    \sum_{m\le X}|\lambda_\pi(m)|^2 \ll_\epsilon X^{1+\epsilon}(1+|\nu_\pi|)^\epsilon;
\end{equation}
see \cite[Proposition 19.6]{DFI02}.

\subsubsection{Functional equation of the Eisenstein series}
We record the functional equation of the Eisenstein series. It follows from (see \cite[eq.(3.10)]{blomer2024local}, cf. \cite[Proposition 4.5.9]{bump1997automorphic}) that
\begin{equation}\label{eq:whittaker-intertwiner-invariance}
W_{f} = \frac{\Gamma_\R(1-2s)}{\Gamma_\R(2s)}W_{M(s)f} =W_{M^\ast(s) f},\quad M^\ast(s):=\frac{\Gamma_\R(1-2s)}{\Gamma_\R(2s)}\dot M(s).
\end{equation}
Moreover, by Schur's lemma and the above it follows that
$$M^\ast(-s)\circ M^\ast(s)=\mathrm{Id}.$$
Thus from the Fourier expansion of Eisenstein series, namely, \eqref{eq:fourier-expansion}, \eqref{eq:constant-term}, and \eqref{eq:formula-hecke-eis}, and the functional equation of the Riemann zeta function it follows that
\begin{multline*}
    \zeta(1+2s)f(g) + \zeta(2s)M(s)f(g) +\sum_{m\neq 0}\frac{|m|^{-s}\tau_{2s}(|m|)}{\sqrt{|m|}}W_{f}(a(m)g)\\
    =\zeta(1-2s)M^\ast(s) f(g) + \zeta(-2s) M(-s)\circ M^\ast(s)f(g) + \sum_{m\neq 0}\frac{|m|^{s}\tau_{-2s}(|m|)}{\sqrt{|m|}}W_{M^\ast(s)f}(a(m)g)
\end{multline*}
Hence, we obtain
\begin{equation}\label{eq:functional-eqn-eis}
    \zeta(1+2s)\Eis(f) = \zeta(1-2s)\Eis(M^\ast(s) f) = \zeta(2s)\Eis(M(s)f)
\end{equation}
for any $s\in\C$.

\subsection{Whittaker functions}\label{sec:whittaker-function}

In this section, $\pi$ will denote an abstract unitary irreducible representation of $G$. We call $\pi$ to be \emph{generic} if $\pi$ has a unique $G$-invariant embedding into $$\mathrm{Ind}_N^G\,e(\cdot):=\{W:G\to\C\text{ smooth }: W\text{ satisfies }\eqref{eq:unipotent-equaivariance}\};$$
\begin{equation}\label{eq:unipotent-equaivariance}
W(n(x)g) = e(x)W(g),\quad \forall x\in\R, g\in G.
\end{equation}
The functions $W$ are known as Whittaker functions and the image of $\pi$ inside $\mathrm{Ind}_N^G\,e(\cdot)$ under the above embedding is known as the Whittaker model of $\pi$. In this paper, we always identify a generic representation and its Whittaker model.

\subsubsection{Mellin Theory}
From the theory of local $\GL(2)\times\GL(1)$ Hecke zeta integral (see, e.g., \cite[Chapter 8]{cogdell1990arithmetic}) we know that for $\varepsilon\in\{0,1\}$ and $s\in\C$ the zeta integral
\begin{equation}\label{def:zeta-integral}
    Z(s,\varepsilon,W):=\int_{\R^\times}W(a(y))|y|^s\sgn(y)^{\varepsilon}\d^\times y
\end{equation}
converges absolutely for $\Re(s)>-\frac{1}{2}$ and hence defines a holomorphic function in this region. The same also follows from the growth of $W(a(y))$, namely, for all $j\in\Z_{\ge 0}$ we have
\begin{equation}\label{eq:rapid-decay}
    (y\partial_y)^jW(a(y)k) \ll_{j,A} |y|^{-A}S_{2,d}(W),\quad y\in \R^\times, k\in K;
\end{equation}
where $A$ can take any value in $(-\frac{1}{2},\infty)$. Here $d$ only depends on $j$ and $A$. The above follows from \cite[Proposition 3.2.3]{MV} temperedness of $\pi$ (at the archimedean place) due to Selberg. Finally, we record the functional equation of the local $\GL(2)\times\GL(1)$ zeta integral (see \cite[eq.(8.1), Proposition 8.1]{cogdell1990arithmetic})
\begin{equation}\label{eq:local-functional-equation}
    Z(s,\varepsilon,W)=Z(-s,\varepsilon,W(\cdot w))\gamma(\tfrac{1}{2}-s,\pi)
\end{equation}
where $\gamma$ is a certain complex meromorphic function (implicitly depending on $\varepsilon$) satisfying
\begin{equation}\label{eq:bound-gamma-factor}
    \gamma(\tfrac{1}{2}-s,\pi)\ll_{\Re(s)} (1+|s|)^{2\Re(s)}\nu_\pi^{O(1)}
\end{equation}
as long as $s$ is a fixed distance away from any pole of the $\gamma$-function; see \cite[Proposition 9.5--9.6]{cogdell1990arithmetic}.

\subsubsection{Unitary structure}
We fix a $G$-invariant unitary structure on the Whittaker model of $\pi$ by 
$$\|W\|^2_\pi:=\int_{\R^\times}|W(a(y)|^2\d^\times y.$$
It is known that cuspidal representations and unitary Eisenstein series $\pi$ of $G$ are generic. For any such $\varphi\in\pi$ with Whittaker function $W_\varphi$ it follows from Schur's lemma that (see \cite[\S5.2-5.3]{cogdell1990arithmetic})
$$\|\varphi\|^2_\pi=C_\pi\|W_\varphi\|^2_\pi.$$

If $\pi$ is cuspidal then from \eqref{eq:average-GRC} and \cite[Theorem 0.2]{HoffLock94} it follows that
$$C_\pi^{-1}\ll_\epsilon (1+|\nu_\pi|)^\epsilon$$
for any $\epsilon>0$.

If $\pi$ is a unitary Eisenstein series with parameter $s\in i\R$ then $C_\pi=1$ which follows from the fact that
$$\|W_f\|_{\I(s)} = \|f\|_{\I(s)};$$
see, e.g., \cite[eq.(3.11)]{blomer2024local}. The above combining with \eqref{eq:whittaker-intertwiner-invariance} implies that
\begin{equation}\label{eq:intertwiner-isometry}
\|f\|_{\I(s)}=\|M^\ast(s)f\|_{\I(-s)}.
\end{equation}

Finally, if $\varphi$, cuspidal or unitary Eisenstein, is a $\D$-eigenfunction if and only if $W_\varphi$ is the same with the same eigenvalue. Thus the above estimates of $C_\pi$ imply that $$S_{2,d}(W_\varphi)\ll_\epsilon (1+|\nu_\pi|)^\epsilon S_{2,d}(\varphi)\ll S_{2,d}(\varphi)$$
for any $\D$-eigenfunction $\varphi$. From now on, if not mentioned otherwise, we assume that $\varphi$ is a $\D$-eigenfunction and use the above without anymore explanation.

\begin{lemma}[Truncation]\label{lem:truncation}
    Fix $\sigma>1$. For every $N>0$ sufficiently large there exists a $d>0$ so that for every automorphic form $\varphi$ we have
    $$(\varphi-\varphi_0)(n(x)a(y)) = \sum_{0\neq |m|\le |y|^{-\sigma}}\frac{\lambda_\pi(|m|)}{\sqrt{|m|}}W_\varphi(a(my))e(mx)+O\left(|y|^NS_{2,d}(\varphi)\right)$$
    as $y\to 0$.
\end{lemma}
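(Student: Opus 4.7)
The plan is to start directly from the Fourier expansion \eqref{eq:fourier-expansion} evaluated at $g = n(x)a(y)$ and use the unipotent equivariance \eqref{eq:unipotent-equaivariance} of the Whittaker model. Since $a(m)n(x) = n(mx)a(m)$ and $a(m)a(y) = a(my)$, we have $W_\varphi(a(m)n(x)a(y)) = e(mx)\,W_\varphi(a(my))$, so
$$(\varphi-\varphi_0)(n(x)a(y)) \;=\; \sum_{m\neq 0}\frac{\lambda_\pi(|m|)}{\sqrt{|m|}}\,W_\varphi(a(my))\,e(mx).$$
The task then reduces to bounding the tail of this sum for $|m|>|y|^{-\sigma}$ by $O(|y|^N S_{2,d}(\varphi))$.

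For this tail, I would combine two ingredients. First, the rapid decay of the Whittaker function: by \eqref{eq:rapid-decay} applied with $j=0$, for any $A>0$ there exists $d=d(A)$ such that
$$|W_\varphi(a(my))| \;\ll_A\; (|m||y|)^{-A}\,S_{2,d}(\varphi).$$
Second, the polynomial growth of the Hecke eigenvalues: in the cuspidal case \eqref{eq:bound-hecke-cusp} gives $|\lambda_\pi(|m|)|\ll|m|^{\vartheta}$, and in the unitary Eisenstein case \eqref{eq:bound-hecke-eis} gives the even better $|\lambda_\pi(|m|)|\ll_\epsilon(|m|(1+|\nu_\pi|))^\epsilon$, with the $(1+|\nu_\pi|)^\epsilon$ factor absorbable into $S_{2,d}(\varphi)$ as in the discussion at the end of \S\ref{sec:whittaker-function}. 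In either case we may use $|\lambda_\pi(|m|)|\ll|m|^{\vartheta}$ uniformly.

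Assembling these two estimates, the tail is bounded by
$$S_{2,d}(\varphi)\,|y|^{-A}\sum_{|m|>|y|^{-\sigma}}|m|^{\vartheta-1/2-A}.$$
Choosing $A>\vartheta+\tfrac{3}{2}$ so the series converges, standard estimation of the tail gives
$$\sum_{|m|>|y|^{-\sigma}}|m|^{\vartheta-1/2-A}\;\ll\;|y|^{\sigma(A-\vartheta-1/2)},$$
and hence the total tail contribution is $\ll S_{2,d}(\varphi)\,|y|^{A(\sigma-1)-\sigma(\vartheta+1/2)}$. Since $\sigma>1$, given any prescribed $N>0$ we choose $A$ large enough that $A(\sigma-1)-\sigma(\vartheta+1/2)\ge N$, which yields the claimed bound with $d=d(A)=d(N,\sigma)$.

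The only mild obstacle is bookkeeping. For cuspidal $\varphi$ absolute convergence and the rapid decay estimate are standard; for Eisenstein $\varphi$ one must verify that the Fourier expansion \eqref{eq:fourier-expansion} converges in the required sense and that the Whittaker-norm comparison constant $C_\pi^{-1}$ (here equal to $1$ by the remark after \eqref{eq:intertwiner-isometry}) together with the $(1+|\nu_\pi|)^\epsilon$ growth of Hecke eigenvalues can all be absorbed into $S_{2,d}(\varphi)$; this is routine given the material already in \S\ref{sec:whittaker-function}, and the leverage $\sigma-1>0$ in the exponent makes the choice of $A$ (hence $d$) unconstrained from above.
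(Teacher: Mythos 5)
Your proof is correct and follows essentially the same route as the paper: the Fourier--Whittaker expansion plus unipotent equivariance to isolate the tail, then rapid decay of the Whittaker function \eqref{eq:rapid-decay} together with polynomial growth of Hecke eigenvalues, with the leverage $\sigma-1>0$ letting $A$ be taken as large as needed. The only cosmetic difference is that you carry the factor $|m|^{\vartheta-1/2}$ explicitly in the tail sum, whereas the paper simply notes $\frac{\lambda_\pi(|m|)}{\sqrt{|m|}}\ll(1+|\nu_\pi|)^\epsilon$ (using $\vartheta<\tfrac12$ so the $m$-power is bounded) and absorbs the $\nu_\pi$-factor into the Sobolev norm; both bookkeepings yield an exponent of the form $A(\sigma-1)-c$ for a fixed $c>0$, hence the same conclusion.
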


\begin{proof}
    We start with the Fourier--Whittaker expansion of $\varphi$ is in \eqref{eq:fourier-expansion}. Using unipotent equivariance of the Whittaker function as in \eqref{eq:unipotent-equaivariance} we obtain
    $$(\varphi-\varphi_0)(n(x)a(y)) = \sum_{m\neq 0}\frac{\lambda_\pi(|m|)}{\sqrt{|m|}}W_\varphi(a(my))e(mx).$$
    Thus it is enough to show that the contribution of the terms for $|m|>|y|^{-\sigma}$ in the above sum is $O\left(|y|^NS_{2,d}(\varphi)\right)$ for all large $N$.
    
    We recall from \eqref{eq:rapid-decay} the bound of the Whittaker function
    $$W_\varphi(a(my))\ll_A |my|^{-A}S_{2,d}(\varphi)$$
    where $A$ will be chose at the end of the proof. From \eqref{eq:bound-hecke-cusp} and \eqref{eq:bound-hecke-eis} we obtain the bound of Hecke eigenvalues
    $$\frac{\lambda_\pi(|m|)}{\sqrt{|m|}}\ll (1+|\nu_\pi|)^\epsilon,\quad m\neq 0.$$
    Thus using triangle inequality and partial summation we obtain
    $$\sum_{|m|\ge |y|^{-\sigma}}\frac{\lambda_\pi(|m|)}{\sqrt{|m|}}W_\varphi(a(my))e(mx)\ll S_{2,d}(\varphi)\sum_{|m|\ge |y|^{-\sigma}}|my|^{-A}\ll |y|^{A(\sigma-1)-\sigma}S_{2,d}(\varphi).$$
    Making $A$ sufficiently large we conclude.
\end{proof}

\begin{lemma}\label{lem:l1-norm-bound}
    There exists a $d>0$ such that for any unitary automorphic form $\varphi$ we have
    $$\|\varphi\|_1 \ll S_{2,d}(\varphi)$$
    and
    $$\varphi(n(x)a(y)k)\ll_\epsilon \max\left(\sqrt{|y|}\,,\,|y|^{-1-\epsilon}\right)S_{2,d}(\varphi)$$
    for all $y\neq 0$, $k\in K$, and uniformly in $x\in\R$.
\end{lemma}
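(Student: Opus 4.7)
The plan is to reduce both bounds to the same pointwise estimate: once $|\varphi(n(x)a(y)k)| \ll_\epsilon \max(\sqrt{|y|}, |y|^{-1-\epsilon}) S_{2,d}(\varphi)$ is established, the $L^1$-norm bound follows by integrating over a standard Siegel domain for $\Gamma\backslash G$, inside which $|y|$ is bounded below by a positive constant and the measure $d|y|/|y|^2$ renders $\sqrt{|y|}/|y|^2=|y|^{-3/2}$ integrable at infinity.

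For the pointwise bound, decompose $\varphi = \varphi_0 + (\varphi-\varphi_0)$. The constant term vanishes when $\varphi$ is cuspidal, and when $\varphi=\Eis(f)$ is unitary Eisenstein, Lemma \ref{lem:constant-term-expression} already gives $\varphi_0(n(x)a(y)k) \ll |y|^{1/2} S_{2,d}(\varphi)$, which is contained in the target. For the oscillatory part, combine the Fourier--Whittaker expansion \eqref{eq:fourier-expansion} with the $N$-equivariance \eqref{eq:unipotent-equaivariance} to write
$$(\varphi-\varphi_0)(n(x)a(y)k)=\sum_{m\neq 0}\frac{\lambda_\pi(|m|)}{\sqrt{|m|}}W_\varphi(a(my)k)e(mx).$$
Split this sum according to $|my|\ge 1$ or $|my|<1$. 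On $|my|\ge 1$ the rapid decay \eqref{eq:rapid-decay} applied with a sufficiently large exponent makes the tail negligible. On $|my|<1$, apply Mellin inversion to $W_\varphi$, shifting the contour just to the right of $-1/2$; the resulting integral converges thanks to the functional equation \eqref{eq:local-functional-equation} and the polynomial bound \eqref{eq:bound-gamma-factor} on the $\gamma$-factor, yielding $W_\varphi(a(my)k) \ll |my|^{1/2-\delta} S_{2,d}(\varphi)$ for a small $\delta>0$. Combining these with the Hecke bounds \eqref{eq:bound-hecke-cusp} and \eqref{eq:bound-hecke-eis} and summing geometrically controls the total by $|y|^{-1/2-\vartheta-\epsilon}S_{2,d}(\varphi)$, comfortably inside the target $|y|^{-1-\epsilon}S_{2,d}(\varphi)$.

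The main technical obstacle is the estimate $W_\varphi(a(y))\ll |y|^{1/2-\delta}S_{2,d}(\varphi)$ as $|y|\to 0$: extracting the Sobolev dependence requires a careful Mellin contour shift past the rightmost pole of the $\gamma$-factor, balanced against bounds on $Z(s,0,W_\varphi)$ on the shifted line. Once this Whittaker estimate is in hand, the rest reduces to standard partial-summation with Hecke bounds, and the $L^1$ bound follows by direct integration over the Siegel domain.
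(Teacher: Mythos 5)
Your proof is correct in outline, but it takes a more elaborate route than the paper and, in doing so, it treats as ``the main technical obstacle'' an estimate that is already available verbatim in the paper's preliminaries. You propose to split the Fourier--Whittaker sum into $|my|\ge 1$ and $|my|<1$ and to handle the latter by a Mellin contour shift past $-1/2$, balanced against the $\gamma$-factor bound, in order to obtain $W_\varphi(a(my)k)\ll |my|^{1/2-\delta}S_{2,d}(\varphi)$. But this is exactly the content of \eqref{eq:rapid-decay} with $A=-\tfrac{1}{2}+\delta$: that bound is stated to hold for \emph{any} fixed $A\in(-\tfrac{1}{2},\infty)$, uniformly in $y\in\R^\times$, with the Sobolev dependence built in; there is nothing left to derive. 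In fact no splitting is needed at all: the paper simply invokes \eqref{eq:rapid-decay} once with $A=1+\epsilon$, which gives $W_\varphi(a(my)k)\ll |my|^{-1-\epsilon}S_{2,d}(\varphi)$ simultaneously for all $m$, pairs it with the uniform Hecke bound $|\lambda_\pi(|m|)|/\sqrt{|m|}\ll 1$, and sums to get $|y|^{-1-\epsilon}S_{2,d}(\varphi)$. Your treatment of the constant term (via Lemma~\ref{lem:constant-term-expression}) and of the $L^1$-norm (integration over the Siegel domain, where $\max(\sqrt{|y|},|y|^{-1-\epsilon})=\sqrt{|y|}$ and $\int \sqrt{y}\,\tfrac{\d y}{y^2}$ converges) both match the paper's argument. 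The extra work you propose does yield a slightly sharper exponent $|y|^{-1/2-\vartheta-\epsilon}$, but since the lemma only claims $|y|^{-1-\epsilon}$, the simpler single-$A$ application of \eqref{eq:rapid-decay} is the cleaner route.
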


The second assertion is essentially the Sobolev embedding as stated in \cite[S2a]{MV}. However, we give a proof for completion.

\begin{proof}
    Applying the Fourier--Whittaker expansion \eqref{eq:fourier-expansion} and unipotent equivariance of Whittaker function \eqref{eq:unipotent-equaivariance} for any $g = n(x)a(y)k$ we write
    $$(\varphi-\varphi_0)(g) = \sum_{m\neq 0}\frac{\lambda_\pi(|m|)}{\sqrt{|m|}}W_\varphi(a(my)k)e(mx).$$
    Using the bounds \eqref{eq:bound-hecke-cusp}, \eqref{eq:bound-hecke-eis}, \eqref{eq:rapid-decay} and  working as in the proof of Lemma \ref{lem:truncation} with $A=1+\epsilon$ we bound the above by
    $$S_{2,d}(\varphi)\sum_{m\neq 0}|my|^{-1-\epsilon}\ll |y|^{-1-\epsilon}S_{2,d}(\varphi)$$
    for some fixed $d>0$. Finally, applying Lemma \ref{lem:constant-term-expression} we obtain
    $$\varphi(g)\ll \max\left(\sqrt{|y|}\,,\,|y|^{-1-\epsilon}\right)S_{2,d}(\varphi),\quad g=n(x)a(y)k.$$
    This proves the second assertion.

    To prove the first assertion we fix the Siegel domain
    $$S:=\{g=n(x)a(y)k\mid |x|\le\tfrac{1}{2},\, y\ge\tfrac{\sqrt{3}}{2},\, k\in K\}\supseteq [G].$$
    Then we obtain
    $$\|\varphi\|_1\le \int_S |\varphi|\ll S_{2,d}(\varphi)\int_{|x|\le \frac{1}{2}}\d x\int_{|y|\gg 1}\sqrt{|y|}\frac{\d^\times y}{|y|}\int_K \d k \ll S_{2,d}(\varphi)$$
    as claimed.
\end{proof}

\begin{remark}\label{rem:non-unitary}
    For $0\le\Re(s)<\tfrac{1}{2}$ and $f\in\I(s)$ it follows from the proof of Lemma \ref{lem:l1-norm-bound} and \eqref{eq:constant-term} that $\Eis(f)$ is $L^1$-integrable on $[G]$.
\end{remark}

\subsection{Spectral decomposition}

For a unitary representation $\pi$ (similarly, $\I(s)$) of $G$ by $\B(\pi)$ we denote an orthogonal basis of $\pi$ consisting of $K$-type vectors so that if $\pi$ is cuspidal then we normalize $\lambda_\pi(1)=1$ and if $\pi$ is Eisenstein then we normalize $\lambda_\pi$ as in \eqref{eq:formula-hecke-eis}. For the rest of the paper, by $\B(\pi)$ we mean a basis of $\pi$ with the properties as above, unless otherwise mentioned.

\begin{lemma}\label{lem:spectral-decomposition}
    There exist a sufficiently large $\ell>0$ such that for any $\phi\in C_\b^{2\ell}([G])$ and any $g\in G$ we have
\begin{equation*}
    \phi(g)=\int_{[G]}\phi\d m_{[G]}+\sum_{\pi\text{ cuspidal}}\sum_{\varphi\in\B(\pi)}\frac{\langle \phi,\varphi\rangle_{[G]}}{\|\varphi\|^2_\pi}\varphi(g)\\
    +\int_{\Re(s)=0}\sum_{f\in\B(\I(s))}\frac{\langle \phi,\Eis(f)\rangle_{[G]}}{\|f\|^2_{\I(s)}}\Eis(f)(g)\frac{\d s}{4\pi i}.
\end{equation*}
The right-hand side converges absolutely and uniformly on compacta. Moreover, in each summand the sum over $\B$ is invariant under a choice of orthogonal basis $\B$.
\end{lemma}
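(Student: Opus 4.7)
The plan is to reduce the statement to the standard $L^2$ Selberg--Langlands spectral decomposition of $L^2([G])$, and to upgrade $L^2$-convergence to absolute, uniform-on-compacta pointwise convergence by exploiting ellipticity of $\D$ together with the smoothness hypothesis $\phi\in C_\b^{2\ell}$. Since $[G]$ has finite volume, $\phi\in C_\b^{2\ell}$ embeds into $L^2([G])$, so the identity holds in the $L^2$-sense; this uses orthogonality within $\B(\pi)$ and within $\B(\I(s))$, together with the fact that $\Eis:\I(s)\to\pi$ is a unitary isomorphism for $\Re(s)=0$ (cf.\ \eqref{eq:whittaker-intertwiner-invariance}, \eqref{eq:intertwiner-isometry}). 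The content of the lemma is therefore the absolute convergence of the right-hand side at each $g$, uniformly on compacta.

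For a single $\D$-eigenfunction $\varphi$ with eigenvalue $\mu_\varphi>0$ (positive by \eqref{def:diff-op}), I would exploit self-adjointness of $\D$ to write
\[
\langle\phi,\varphi\rangle_{[G]}=\mu_\varphi^{-\ell}\langle\D^\ell\phi,\varphi\rangle_{[G]},\qquad |\langle\D^\ell\phi,\varphi\rangle_{[G]}|\le \|\D^\ell\phi\|_\infty\,\|\varphi\|_1\ll S_{\infty,\ell}(\phi)\,S_{2,d}(\varphi),
\]
where the final bound is Lemma \ref{lem:l1-norm-bound}. For the pointwise value $|\varphi(g)|$ with $g$ ranging over a compact set, the second estimate of Lemma \ref{lem:l1-norm-bound} (after lifting $g$ into a Siegel domain, where $|y|$ stays bounded above and away from $0$) gives $|\varphi(g)|\ll S_{2,d'}(\varphi)$. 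Since $\varphi$ lies in a $K$-isotypic component and is a $\D$-eigenfunction, $S_{2,d}(\varphi)=\mu_\varphi^{d}\|\varphi\|_\pi$, so each summand of the cuspidal sum satisfies
\[
\frac{|\langle\phi,\varphi\rangle_{[G]}|\,|\varphi(g)|}{\|\varphi\|_\pi^2}\ll \mu_\varphi^{-\ell+O(1)}\,S_{\infty,\ell}(\phi).
\]

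Summing over the orthogonal basis $\B(\pi)$ and invoking Lemma \ref{lem:trace-class-pi}, the contribution of each cuspidal $\pi$ is $\ll (1+|\nu_\pi|)^{-\ell''}S_{\infty,\ell}(\phi)$ for any prescribed $\ell''$, provided $\ell$ was chosen sufficiently large. Summation over cuspidal $\pi$ then converges by Weyl's law for $[G]$ (which gives $\#\{\pi:|\nu_\pi|\le T\}\ll T^2$). The Eisenstein part is treated identically, with the added integral over $s\in i\R$; here the flat-family normalization (in which $f|_K$ is $s$-independent) ensures that all Sobolev-type quantities depend polynomially on $(1+|s|)$, which is absorbed by the same mechanism because $\nu_{\I(s)}\asymp 1+s^2$. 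The main obstacle is uniform-in-$s$ control of the Eisenstein contribution; this is handled via the explicit formulas of \S\ref{sec:fourier-coefficient}, the isometry \eqref{eq:intertwiner-isometry}, and the standard $\gamma$-factor bound \eqref{eq:bound-gamma-factor}. Finally, the invariance of each $\B$-sum under a change of basis is immediate: within every $\D$-eigenspace, the partial sum $\sum_\varphi \langle\phi,\varphi\rangle_{[G]}\varphi/\|\varphi\|_\pi^2$ is the integral kernel of the orthogonal projection onto that eigenspace and hence depends only on the eigenspace itself, not on the chosen orthogonal basis.
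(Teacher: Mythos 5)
Your proposal is correct and follows essentially the same strategy as the paper: start from the $L^2$ spectral decomposition, integrate by parts against $\D$ to gain decay in the $\D$-eigenvalue, control $\|\varphi\|_1$ and $|\varphi(g)|$ via Lemma \ref{lem:l1-norm-bound}, and sum using Lemma \ref{lem:trace-class-pi} plus Weyl's law. The only cosmetic difference is your invocation of the $\gamma$-factor bound \eqref{eq:bound-gamma-factor} for the Eisenstein contribution, which is unnecessary here---the same trace-class argument applied to $\I(s)$, together with the convergence of $\int_\R(1+|t|)^{-2}\d t$, suffices exactly as in the paper.
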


Lemma \ref{lem:spectral-decomposition} is essentially given in \cite[Lemma 2]{bruggeman205spectral}. However, in \cite{bruggeman205spectral} the test function $\phi$ is chosen to be inside $C^\infty([G])\cap L^2([G])$. Here we give a sketch to show that one may choose the test function from $C_\b^{2\ell}([G])$. The ideas of the proof are from \cite[\S2.6.5]{MV}.

\begin{proof}
    We start with the $L^2$-version of the equality given in the statement of the lemma, which can be found in \cite[Theorem, Chapter 1]{cogdell1990arithmetic}.

    Let $\pi$ be cuspidal and $\varphi\in\B(\pi)$. Thus $\varphi$ is also an eigenvector of $\D$ with, say, eigenvalue $\lambda_\varphi$. Integrating by parts with respect to $\D$ for $l$ times we obtain
    $$\langle\phi,\varphi\rangle_{[G]}=\lambda_\varphi^{-l}\langle\D^l\phi,\varphi\rangle_{[G]}\ll \lambda_\varphi^{-l}\|\D^l\phi\|_\infty\|\varphi\|_1.$$
    Applying Lemma \ref{lem:l1-norm-bound} we obtain
    $$\frac{\langle\phi,\varphi\rangle_{[G]}}{\|\varphi\|_\pi}\ll\lambda_\varphi^{-l+d}\|\D^l\phi\|_\infty.$$
    Working similarly, for $f\in\I(s)$ we obtain
    $$\frac{\langle \phi,\Eis(f)\rangle_{[G]}}{\|f\|_{\I(s)}}\ll\lambda_f^{-l+d}\|\D^l\phi\|_\infty$$
    where $\lambda_f$ is the $\D$-eigenvalue of $f$ (note that $\Eis(f)$ is a $\D$-eigenfunction if and only if $f$ is).
    Once again applying Lemma \ref{lem:l1-norm-bound} we have
    $$\varphi(g)\ll_g S_{2,d}(\varphi),\quad g\in [G].$$
    Thus for a fixed compact set $\Omega\subset [G]$ and $g\in\Omega$ the right-hand side of the claimed estimate in the lemma is bounded by
    $$\ll_\Omega \|\phi\|_1+S_{\infty,l}(\phi)\left(\sum_{\pi\text{ cuspidal}}\sum_{\varphi\in\B(\pi)}\lambda_\varphi^{-l+2d}+\int_{\R}\sum_{f\in\B(\I(it))}\lambda_f^{-l+2d}\d t\right).$$
    The above $\varphi$-sum (resp.\ $f$-sum) can be realized as $\mathrm{trace}\left(\D^{-l+2d}\vert_\pi\right)$ (resp.\ $\mathrm{trace}\left(\D^{-l+2d}\vert_{\I(it)}\right)$) as $\D$ is positive definite. We make $l$ sufficiently large. Applying Lemma \ref{lem:trace-class-pi} for cuspidal $\pi$ and Weyl's law (see, e.g., \cite[\S2.6.5 Corollary]{MV}) that
    $$\#\{\pi\text{ cuspidal }\mid |\nu_\pi|\le X\}\ll X^{O(1)},$$
    we conclude that the above $(\pi,\varphi)$-sum converges absolutely. Similarly, applying Lemma \ref{lem:trace-class-pi} for $\I(s)$ and noting that $\int_\R(1+|t|)^{-2}\d t<\infty$ we conclude that the above $t$-integral and $f$-sum converge absolutely.
    
    Thus the right-hand side of the claimed equation converges absolutely, hence defines a continuous function of $g$. Thus the pointwise decomposition follows.
\end{proof}

\section{Equidistribution with Average Fourier Decay}

\begin{lemma}\label{lem:fourier-transform-basis}
    Let $\pi\ni\varphi$ be cuspidal or a unitary Eisenstein series, $x_0\in \R/\Z$, and $q\in\N$. Let $\mu$ be a Borel probability measure on $\R$. Then for any $\sigma>1$ there exists a $d>0$ such that we have $$\int\varphi(n(x_0+\tfrac{x}{q})a(y))\d\mu(x)=\sum_{0\neq|m|\le |y|^{-\sigma}}\frac{\lambda_\pi(|m|)}{\sqrt{|m|}}W_\varphi(a(my))e(mx_0)\widehat{\mu}(m/q) +O\left(|y|^{\frac{1}{2}}S_{2,d}(\varphi)\right)$$
    as $y\to 0$.
\end{lemma}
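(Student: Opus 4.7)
The plan is to directly combine the truncated Fourier--Whittaker expansion from Lemma \ref{lem:truncation} with the constant-term bound from Lemma \ref{lem:constant-term-expression}, and then commute the integration against $\mu$ past the finite $m$-sum to recognize the Fourier transform $\widehat{\mu}$ evaluated at $m/q$.

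First I would split $\varphi = \varphi_0 + (\varphi-\varphi_0)$ and handle the constant term. If $\varphi$ is cuspidal then $\varphi_0 \equiv 0$ and there is nothing to do. If $\varphi = \Eis(f)$ is a unitary Eisenstein series, then Lemma \ref{lem:constant-term-expression} gives the uniform pointwise bound $\varphi_0(n(x_0+x/q)a(y)) \ll |y|^{1/2} S_{2,d}(\varphi)$; since $\mu$ is a probability measure, integrating against $\d\mu$ preserves this bound and contributes the stated $O(|y|^{1/2} S_{2,d}(\varphi))$ term.

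Next I would apply Lemma \ref{lem:truncation} to $(\varphi - \varphi_0)(n(x_0 + x/q) a(y))$, choosing $N$ large enough so that $|y|^N \le |y|^{1/2}$ for small $y$. Using the unipotent equivariance of the Whittaker function, the main term is
\begin{equation*}
\sum_{0 \neq |m| \le |y|^{-\sigma}} \frac{\lambda_\pi(|m|)}{\sqrt{|m|}} W_\varphi(a(my))\, e(m x_0)\, e(mx/q),
\end{equation*}
with truncation error $O(|y|^N S_{2,d}(\varphi))$ uniform in $x$. Because the $m$-sum is finite and the Hecke eigenvalues and Whittaker values are independent of $x$, I can swap the $m$-sum with $\int \d\mu(x)$ (no convergence issue). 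The inner integral becomes
\begin{equation*}
\int e(mx/q) \d\mu(x) = \widehat{\mu}(m/q),
\end{equation*}
by the definition \eqref{hatmu} of the Fourier transform. The truncation error term, being independent of $x$, stays $O(|y|^N S_{2,d}(\varphi))$ after integrating against the probability measure $\mu$.

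Combining the two contributions gives the stated identity, with overall error $O(|y|^{1/2} S_{2,d}(\varphi))$ absorbing both the constant-term bound and the (much smaller for large $N$) truncation error. There is no substantive obstacle here: everything is a direct consequence of the Fourier--Whittaker expansion and the preparatory lemmas; the only mild care needed is uniformity in $x$ of the truncation error, which is already built into Lemma \ref{lem:truncation}, and separately treating the Eisenstein constant term so that the same $O(|y|^{1/2})$ bound covers both spectral families.
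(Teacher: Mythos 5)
Your proof is correct and follows essentially the same route as the paper: apply Lemma~\ref{lem:truncation} at $n(x_0+\tfrac{x}{q})a(y)$, integrate both sides against the probability measure $\mu$ (swapping the finite $m$-sum with $\int\d\mu$ to produce $\widehat\mu(m/q)$), and absorb the left-$N$-invariant constant term via the bound in Lemma~\ref{lem:constant-term-expression}. The only cosmetic difference is that you re-invoke unipotent equivariance to split $e(m(x_0+x/q))$, whereas the paper treats this as already built into the form of Lemma~\ref{lem:truncation}; the substance is identical.
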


\begin{proof}
    We use the equation in Lemma \ref{lem:truncation} and integrate both sides in $x$ with respect to $\mu$. As $\mu$ is a probability measure and $\varphi_0$ is left $N$-invariant (as in Lemma \ref{lem:constant-term-expression}) we obtain that
    \begin{multline*}
        \int\varphi(n(x_0+\tfrac{x}{q})a(y))\d\mu(x)=\varphi_0(a(y)) \\
        + \sum_{0\neq|m|\le |y|^{-\sigma}}\frac{\lambda_\pi(|m|)}{\sqrt{|m|}}W_\varphi(a(my))e(mx_0)\widehat{\mu}(m/q) +O\left(|y|^{\frac{1}{2}}S_{2,d}(\varphi)\right).
    \end{multline*}
    We conclude the proof after an application of the estimate in Lemma \ref{lem:constant-term-expression}.
\end{proof}

Recall the definition of $\Dl\mu$ from \eqref{def:lp-dim-mu-star}.

\begin{lemma}\label{lem:averaging-mu-hat-q}
    Let $\mu$ be a Borel probability measure and $\ell<\Dl\mu$. Then for $q\in\N$ and $X\ge 1$ we have
    $$\sum_{|m|\le X}|\widehat{\mu}(m/q)|\ll q^\ell X^{1-\ell}.$$
\end{lemma}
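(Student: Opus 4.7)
The plan is to reduce the sum $\sum_{|m|\le X}|\widehat\mu(m/q)|$ to a shape directly controlled by the definition of $\Dl\mu$ via a coset decomposition of $\Z/q\Z$.

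First, I would unpack the definition \eqref{def:lp-dim-mu-star}: if $\ell<\Dl\mu$, then $1-\ell$ lies strictly above the infimum, so there exists $s<1-\ell$ with
\[
\sup_{0\le \theta\le 1}\sum_{|n|\le Y}|\widehat{\mu}(n+\theta)|=o(Y^{s})\quad\text{as }Y\to\infty,
\]
which in particular gives the uniform bound
\begin{equation}\label{eq:pf-prop-unif}
\sup_{0\le \theta\le 1}\sum_{|n|\le Y}|\widehat{\mu}(n+\theta)|\ll Y^{1-\ell},\quad Y\ge 1.
\end{equation}

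Next, I would split according to residues modulo $q$: every $m\in\Z$ with $|m|\le X$ can be written uniquely as $m=qn+r$ with $r\in\{0,1,\ldots,q-1\}$ and $n\in\Z$ satisfying $|n|\le X/q+1$. Since $m/q=n+r/q$ and $\theta:=r/q\in[0,1)$,
\[
\sum_{|m|\le X}|\widehat{\mu}(m/q)|\le \sum_{r=0}^{q-1}\sum_{|n|\le X/q+1}|\widehat{\mu}(n+r/q)|.
\]
Applying \eqref{eq:pf-prop-unif} with $Y=X/q+1$ to each inner sum and summing over the $q$ residues gives
\[
\sum_{|m|\le X}|\widehat{\mu}(m/q)|\ll q\cdot(X/q+1)^{1-\ell},
\]
which is $\ll q^\ell X^{1-\ell}$ as soon as $X\ge q$. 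For $X<q$, the trivial bound $|\widehat{\mu}|\le 1$ gives $\sum_{|m|\le X}|\widehat\mu(m/q)|\ll X\le X^{1-\ell}q^{\ell}$, since $\ell\in[0,1]$. Combining the two ranges yields the claim.

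The argument is routine once the $\Z/q\Z$-decomposition is set up; the only mild subtlety is ensuring the shift $\theta=r/q$ stays in $[0,1]$ (which is exactly what $\Dl$ is designed to handle, as opposed to $\dl$), and keeping the implicit constant in \eqref{eq:pf-prop-unif} uniform in $\theta$. No representation theory or Fourier analysis beyond the definition is needed.
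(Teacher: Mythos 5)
Your proof is correct and takes essentially the same approach as the paper's: split the sum over $|m|\le X$ into residue classes modulo $q$, write $m/q=n+r/q$, apply the uniform bound over shifts $\theta\in[0,1]$ coming directly from the definition of $\Dl\mu$, and treat the range $X<q$ trivially. The only cosmetic difference is that you spell out the unpacking of the infimum in \eqref{def:lp-dim-mu-star} to obtain the uniform bound, whereas the paper invokes the definition more tersely.
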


\begin{proof}
    We may assume that $X\ge q$, otherwise, the claim is trivial. We see that
    \begin{equation*}
        \sum_{|m|\le X}|\widehat{\mu}(m/q)|
        \le 1+2\sum_{a=1}^{q}\sum_{n=0}^{[X/q]}|\widehat{\mu}(n+\tfrac{a}{q})|\ll q\sup_{0\le\theta\le 1}\sum_{n=0}^{[X/q]}|\widehat{\mu}(n+\theta)|.
    \end{equation*}
    From \eqref{def:lp-dim-mu-star}, it follows that the above is
    $$\ll q [\tfrac{X}{q}]^{1-\ell} \le q^\ell X^{1-\ell}$$
    as claimed.
\end{proof}

\begin{lemma}\label{lem:reduction-to-aut-form}
    Let $x_0\in\R/\Z$ and $q\in\N$. Assume that there are an $\eta>0$ and $b,d>0$ such that for any $\pi\ni\varphi$ cuspidal or unitary Eisenstein series, we have
    $$\int\varphi(n(x_0+\tfrac{x}{q})a(y))\d\mu(x) \ll q^b|y|^\eta S_{2,d}(\varphi)$$
    as $y\to 0$. Then there exists sufficiently large $\ell>0$ so that for any $\phi\in C_\b^{2\ell}([G])$ one has
    $$\int\phi(n(x_0+\tfrac{x}{q})a(y))\d\mu(x) = \int_{[G]}\phi\d m_{[G]}+ O\left(|y|^\eta q^bS_{\infty,\ell}(\phi)\right)$$
    as $y\to 0$.
\end{lemma}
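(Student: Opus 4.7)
The plan is to spectrally expand $\phi$ via Lemma \ref{lem:spectral-decomposition}, integrate each component against $\mu$, and bound the resulting spectral sum using the hypothesis uniformly in $\pi$.

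Fix $\ell > 0$ to be specified. Apply Lemma \ref{lem:spectral-decomposition} to $\phi \in C_\b^{2\ell}([G])$ at $g = n(x_0 + x/q) a(y)$. Since each spectral component is left $\Gamma$-invariant and $N \cap \Gamma = \{n(k) : k \in \Z\}$, the map $x \mapsto \Gamma n(x_0 + x/q) a(y) \in [G]$ has image in a compact subset of $[G]$ (for fixed $y, q, x_0$), so the spectral expansion converges uniformly in $x$. Integrating term-by-term against the probability measure $\mu$, the constant term contributes $m_{[G]}(\phi)$, and the hypothesis yields, for each cuspidal (resp.\ Eisenstein) basis vector, the bound $q^b |y|^\eta S_{2,d}(\varphi)$ (resp.\ $S_{2,d}(\Eis(f))$) for the corresponding $\mu$-integral.

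The remaining task is to show that
$$\sum_{\pi\text{ cuspidal}}\sum_{\varphi\in\B(\pi)}\frac{|\langle\phi,\varphi\rangle_{[G]}|}{\|\varphi\|_\pi^2}S_{2,d}(\varphi) + \int_{\Re(s)=0}\sum_{f\in\B(\I(s))}\frac{|\langle\phi,\Eis(f)\rangle_{[G]}|}{\|f\|_{\I(s)}^2}S_{2,d}(\Eis(f))\,\frac{|\d s|}{4\pi} \ll S_{\infty,\ell}(\phi).$$
The template is identical to that of the proof of Lemma \ref{lem:spectral-decomposition}. For a $\D$-eigenvector $\varphi$ with eigenvalue $\lambda_\varphi$, integration by parts against $\D^\ell$ gives $|\langle\phi,\varphi\rangle_{[G]}| \leq \lambda_\varphi^{-\ell} S_{\infty,\ell}(\phi) \|\varphi\|_1$, and by Lemma \ref{lem:l1-norm-bound}, $\|\varphi\|_1 \ll S_{2,d'}(\varphi) = \lambda_\varphi^{d'}\|\varphi\|_\pi$ for a fixed $d'>0$. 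Combined with $S_{2,d}(\varphi) = \lambda_\varphi^d \|\varphi\|_\pi$, each spectral term is controlled by $\lambda_\varphi^{-\ell+d+d'} S_{\infty,\ell}(\phi)$. Summing over $\varphi \in \B(\pi)$ yields $\mathrm{trace}(\D^{-\ell+d+d'}|_\pi) S_{\infty,\ell}(\phi)$, which by Lemma \ref{lem:trace-class-pi} (upon choosing $\ell$ large enough) is $\ll (1+|\nu_\pi|)^{-A} S_{\infty,\ell}(\phi)$ for any prescribed $A$. Summation over cuspidal $\pi$ via Weyl's law and integration over $s \in i\R$ (using $(1+|t|)^{-A}$ integrable for $A>1$) then give absolute convergence.

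The main obstacle is purely bookkeeping: choosing $\ell$ large enough to absorb the cumulative Sobolev-norm exponents $d + d'$ appearing after integration by parts and the $\|\cdot\|_1$-estimate, while still retaining sufficient decay for Lemma \ref{lem:trace-class-pi} and Weyl's law to close the argument. Since the hypothesis is uniform in $\pi$, the factor $q^b |y|^\eta$ factors cleanly out of the spectral expansion, yielding the claimed bound.
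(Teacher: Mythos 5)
Your proof is correct and follows essentially the same route as the paper: spectrally decompose $\phi$ via Lemma \ref{lem:spectral-decomposition}, interchange the $\mu$-integral with the spectral sum and integral (your compactness-of-the-horocycle argument combined with the uniform-on-compacta convergence in Lemma \ref{lem:spectral-decomposition} is a valid alternative to the paper's appeal to Lemma \ref{lem:l1-norm-bound}), and then rerun the absolute-convergence bookkeeping from the proof of Lemma \ref{lem:spectral-decomposition} with $\varphi(g)\ll_g S_{2,d}(\varphi)$ replaced by the hypothesis $\int\varphi(n(x_0+\tfrac{x}{q})a(y))\d\mu(x)\ll q^b|y|^\eta S_{2,d}(\varphi)$, letting the uniform factor $q^b|y|^\eta$ come out front.
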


\begin{proof}
    We start by spectrally decomposing $\phi$ as given in Lemma \ref{lem:spectral-decomposition} at $g=n(x_0+\tfrac{x}{q})a(y)$ and integrate in $x$ with respect to $\mu$. We obtain
    \begin{multline*}
        \int\phi(n(x_0+\tfrac{x}{q})a(y))\d\mu(x) -\int_{[G]}\phi
        =\sum_{\pi\text{ cuspidal}}\sum_{\varphi\in\B(\pi)}\frac{\langle \phi,\varphi\rangle_{[G]}}{\|\varphi\|^2_\pi}\int\varphi(n(x_0+\tfrac{x}{q})a(y))\d\mu(x)
        \\+\int_{\Re(s)=0}\sum_{f\in\B(\I(s))}\frac{\langle \phi,\Eis(f)\rangle_{[G]}}{\|f\|^2_{\I(s)}}\int\Eis(f)(n(x_0+\tfrac{x}{q})a(y))\d\mu(x)\frac{\d s}{4\pi i}.
    \end{multline*}
    The interchanges of above $\mu$ integral with spectral sum and integral are justified via absolute convergence of the spectral decomposition that is uniform in $x$ (which follows from Lemma \ref{lem:l1-norm-bound}) and finiteness of $\mu$. Now the proof follows exactly as the proof of Lemma \ref{lem:spectral-decomposition} where we replace the estimates 
    $$\varphi(g)\ll_g S_{2,d}(\varphi)\quad\text{ by }\quad\int\varphi(n(x_0+\tfrac{x}{q})a(y))\d\mu(x) \ll q^b|y|^\eta S_{2,d}(\varphi),$$
    as given in the assumption.
\end{proof}

The following proposition is a stronger version of Theorem \ref{thm:fractal_withoutbase}, under a slightly stronger assumption.

\begin{prop}\label{prop:fractal}
    Let $x_0\in\R/\Z$, $q\in\N$ and $g_0:=n(x_0)a(q^{-1})$. Let $\mu$ be a Borel probability measure such that $${\Dl}\mu>\frac{39}{64}=0.609375.$$ Then there exist an  $\eta>0$, $b>0$, and $\ell\in\N$ so that for any $\phi\in C_\b^{2\ell}(\Gamma\backslash G)$ we have
\begin{equation*}
    \int\phi\left(g_0n(x)a(y)\right)\d\mu(x)=\int_{[G]}\phi\d m_{[G]}+ O_\mu\left(q^b|y|^{\eta}S_{\infty,\ell}(\phi)\right),
\end{equation*}
as $y\to 0$.
\end{prop}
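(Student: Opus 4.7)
The plan is to invoke Lemma \ref{lem:reduction-to-aut-form} to reduce the proposition to a uniform bound on individual automorphic forms, then to combine the Fourier--Whittaker expansion of Lemma \ref{lem:fourier-transform-basis} with a pointwise spectral gap estimate, and finally to sum against $\widehat{\mu}$ using Lemma \ref{lem:averaging-mu-hat-q}. Since $g_0 n(x) a(y) = n(x_0 + x/q)\, a(y/q)$, it is enough (after applying Lemma \ref{lem:reduction-to-aut-form} and then rescaling $y \mapsto y/q$ in its conclusion) to prove that, for every cuspidal or unitary Eisenstein form $\varphi$ in a $K$-isotypic $\D$-eigenbasis,
$$\int \varphi(n(x_0 + x/q) a(y)) \d\mu(x) \ll q^{b'} |y|^{\eta'} S_{2,d}(\varphi), \qquad y \to 0,$$
for some fixed $b', \eta' > 0$ and $d \in \N$ independent of $\varphi$ and $\pi$.

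Fix $\sigma > 1$. Lemma \ref{lem:fourier-transform-basis} expresses the left-hand side as
$$\sum_{0 \neq |m| \le |y|^{-\sigma}} \frac{\lambda_\pi(|m|)}{\sqrt{|m|}} W_\varphi(a(my))\, e(mx_0)\, \widehat{\mu}(m/q) \;+\; O\bigl(|y|^{1/2} S_{2,d}(\varphi)\bigr),$$
whose error term is admissible. The decisive input is the pointwise estimate, uniform in $\pi$ and $m$,
$$\left| \frac{\lambda_\pi(|m|)}{\sqrt{|m|}} W_\varphi(a(my)) \right| \ll |y|^{1/2 - \theta}\, (1 + |my|)^{-N}\, S_{2,d}(\varphi),$$
with $\theta$ any constant strictly greater than $7/64$ and $N$ arbitrarily large. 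This follows by combining the Kim--Sarnak bound \eqref{eq:bound-hecke-cusp}, $|\lambda_\pi(m)| \ll m^\theta$, at the finite places (the Eisenstein analogue being stronger by \eqref{eq:bound-hecke-eis}), with the archimedean Kim--Sarnak bound in the guise of the Whittaker asymptotic $|W_\varphi(a(u))| \ll |u|^{1/2 - \theta} S_{2,d}(\varphi)$ for small $|u|$, together with the rapid decay \eqref{eq:rapid-decay} for large $|u|$.

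Combining the pointwise estimate with Lemma \ref{lem:averaging-mu-hat-q} applied for any $\ell < \Dl \mu$, the main sum is controlled by
$$|y|^{1/2 - \theta} \cdot q^\ell |y|^{-\sigma(1-\ell)}\, S_{2,d}(\varphi) \;=\; q^\ell\, |y|^{1/2 - \theta - \sigma(1-\ell)}\, S_{2,d}(\varphi).$$
Since $\Dl \mu > \tfrac{39}{64} = \tfrac{1}{2} + \tfrac{7}{64}$, one may select $\theta > 7/64$ close enough, then $\ell \in (\tfrac{1}{2} + \theta,\, \Dl\mu)$, and finally $\sigma > 1$ close enough to $1$ that $\eta' := \tfrac{1}{2} - \theta - \sigma(1-\ell) > 0$; set $b' := \ell$. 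Applying Lemma \ref{lem:reduction-to-aut-form} and performing the rescaling $y \mapsto y/q$ in its conclusion yields the proposition with $\eta = \eta'$ and $b = b' - \eta' > 0$ (the latter since $\ell > \tfrac{1}{2} > \eta'$). The principal obstacle is establishing the Whittaker bound $|W_\varphi(a(u))| \ll |u|^{1/2 - \theta} S_{2,d}(\varphi)$ uniformly in the spectral and $K$-type data of $\pi$, since this is precisely the archimedean counterpart of Kim--Sarnak and drives the numerical threshold $39/64$; everything else is bookkeeping.
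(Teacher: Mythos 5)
Your proof is correct and follows essentially the same route as the paper's: reduce via Lemma~\ref{lem:reduction-to-aut-form} to a uniform bound on a single $\varphi$, apply the truncated Fourier--Whittaker expansion from Lemma~\ref{lem:fourier-transform-basis}, bound $\lambda_\pi(|m|)/\sqrt{|m|}$ by \eqref{eq:bound-hecke-cusp}--\eqref{eq:bound-hecke-eis} and $W_\varphi(a(my))$ by \eqref{eq:rapid-decay} with $A=-\tfrac12+\theta$ so the $m$-powers cancel, and then sum $|\widehat\mu(m/q)|$ via Lemma~\ref{lem:averaging-mu-hat-q}. Your handling of the extra $a(q^{-1})$ by rewriting $g_0n(x)a(y)=n(x_0+x/q)a(y/q)$ and rescaling $y\mapsto y/q$ in the conclusion is a perfectly good (and marginally cleaner) alternative to the paper's device of passing to $\phi_q=\phi(\cdot\,a(q^{-1}))$ and invoking $S_{\infty,\ell}(\phi_q)\ll q^{O(1)}S_{\infty,\ell}(\phi)$.

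One conceptual point you get wrong, though, even if it does not affect the logic: the Whittaker estimate $|W_\varphi(a(u))|\ll|u|^{1/2-\theta}S_{2,d}(\varphi)$ is \emph{not} ``the archimedean counterpart of Kim--Sarnak'' and does \emph{not} drive the threshold $39/64$. In the paper it is exactly \eqref{eq:rapid-decay} with $A=-\tfrac12+\theta$, which is available for \emph{every} $\theta>0$ because the archimedean components occurring here are tempered (for level $1$ this is a theorem, attributed to Selberg in the text). The number $\tfrac{39}{64}=\tfrac12+\tfrac{7}{64}$ is driven entirely by the nonarchimedean Hecke-eigenvalue bound $\lambda_\pi(|m|)\ll|m|^{\vartheta}$ with $\vartheta>7/64$ from \eqref{eq:bound-hecke-cusp}; it is precisely this finite-place bound, not the Whittaker decay, that would improve under Ramanujan--Petersson, which is why the paper's remark after Theorem~\ref{thm:fractal_withoutbase} says the constant drops to $\tfrac12$ under that conjecture.
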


\begin{proof}
First, note that
$$g_0n(x)a(y)=n(x_0+\tfrac{x}{q})a(y)a(q^{-1}).$$
Let $\pi\ni\varphi$ be a cuspidal representation or a unitary Eisenstein series. We start with the first term of the right-hand side of the equation in Lemma \ref{lem:fourier-transform-basis} with $\varphi$. Applying \eqref{eq:bound-hecke-cusp} with $7/64<\vartheta<\Dl\mu-\frac{1}{2}$, \eqref{eq:bound-hecke-eis}, and \eqref{eq:rapid-decay} with $A=-\frac{1}{2}+\vartheta$ we estimate
\begin{align*}
    \sum_{0\neq|m|\le |y|^{-\sigma}}\frac{\lambda_\pi(|m|)}{\sqrt{|m|}}W_{\varphi}(a(my))e(mx_0)\widehat{\mu}(m/q)
    &\ll S_{2,d}(\varphi)\sum_{|m|\le|y|^{-\sigma}}|m|^{\vartheta-\frac{1}{2}}|my|^{\frac{1}{2}-\vartheta}|\widehat{\mu}(m/q)|\\
    &\ll S_{2,d}(\varphi)|y|^{\frac{1}{2}-\vartheta}\sum_{|m|\le |y|^{-\sigma}}|\widehat{\mu}(m/q)|
\end{align*}
for some $d>0$ independent of $\varphi$ and $q$.

Fix $\frac{1}{2}+\vartheta<b<\Dl\mu$. Then from Lemma \ref{lem:averaging-mu-hat-q} we obtain the last sum above is $O\left(q^b|y|^{-\sigma(1-b)}\right)$. Hence, from Lemma \ref{lem:fourier-transform-basis} we obtain
$$\int\varphi(n(x_0+\tfrac{x}{q})a(y))\d\mu(x)\ll S_{2,d}(\varphi)\left(|y|^{\frac{1}{2}} + q^b |y|^{b-(\frac{1}{2}+\vartheta)-(\sigma-1)(1-b)}\right).$$
Choosing $0<\sigma-1$ sufficiently small we obtain that
$$\int\varphi(n(x_0+\tfrac{x}{q})a(y))\d\mu(x)\ll q^b|y|^{\eta}S_{2,d}(\varphi)$$
for some $\eta>0$.
We apply Lemma \ref{lem:reduction-to-aut-form} with $\phi_q(g):=\phi(g a(q^{-1}))$ to obtain
$$\int\phi(g_0n(x)a(y))\d\mu(x)=\int\phi_q(n(x_0+\tfrac{x}{q})a(y))\d\mu(x) = \int_{[G]}\phi_q\d m_{[G]}+ O\left(|y|^\eta q^bS_{\infty,\ell}(\phi_q)\right).$$
Finally, noting that (see, e.g., \cite[Lemma 3.3 (1)]{khalil2023random})
$$S_{\infty,\ell}(\phi_q)\ll \|\mathrm{Ad}(a(q^{-1}))\|^{\ell} S_{\infty,\ell}(\phi) \ll q^{O(1)}S_{\infty,\ell}(\phi)$$
and by measure invariance
$$\int_{[G]}\phi_q = \int_{[G]}\phi,$$
we conclude the proof.
\end{proof}

\begin{proof}[Proof of Theorem \ref{thm:fractal_withoutbase} and Corollary \ref{cor:convol-infty}]
Proof of Theorem \ref{thm:fractal_withoutbase} follows similarly to the proof of Proposition \ref{prop:fractal}, but with $x_0=0$ and $q=1$, and $\Dl$ replaced by $\dl$. Since the error term does not on depend on $x_0$, Corollary \ref{cor:convol-infty} follows.
\end{proof}

\begin{proof}[Proof of Corollary \ref{cor:convol-2}]
    Note that for any two Borel probability measures $\mu_1$ and $\mu_2$ we have 
    \begin{equation}\label{eq:conv-fourier}
    \widehat{\mu_1\ast\mu_2}(\xi)=\widehat\mu_1(\xi)\widehat\mu_2(\xi).
    \end{equation}
    Hence by Cauchy--Schwarz we have
    $$\sum_{|m|\le X} \vert \widehat{\mu_1\ast\mu_2}(m)\vert\leq \left(\sum_{|m|\le X}\vert\widehat\mu_1(m)\vert^2\right)^{1/2}\left(\sum_{|m|\le X}\widehat\mu_1(m)\vert^2\right)^{1/2}\ll X^{1-\frac{l_1+l_2}{2}}$$
    for any $l_i<\dim_{\ell^2}\mu_i$.
    Since each $\mu_i$ is $s_i$-AD regular, by $\eqref{eqn:l2=s}$ it follows, 
    $$\dl\mu_1\ast\mu_2\geq \frac{s_1+s_2}{2}.$$ The corollary now follows from Theorem \ref{thm:fractal_withoutbase}.
\end{proof}

\begin{proof}[Proof of Corollary \ref{intro_thm1}]

From \cite[Theorem 2.6]{CVY24} we recall that for $D$ in arithmetic progression with $\#D=l$ one has
\begin{equation}\label{eqn:l1}
\dl\mu_{b,D} \geq \frac{\log l}{\log b}-\frac{\log(4+\log(2l))}{\log b}.
\end{equation}  
Suppose $\tfrac{39}{64}<s<1$. Let $b(s)\in\N$ such that for every $b\geq b(s)$ we have
\begin{equation}\label{eqn:b(s)}
    s-\frac{\log (4+\log(2b))}{\log b}> \frac{39}{64}\quad\text{and}\quad b-b^s\geq 2.
\end{equation}
Now fix $b\geq b(s)$ and let $2\le l_1\in (b^s,b)\cap\N$. Let $K_{b,D}$ be such that $D$ is any subset of $\{0,\dots,b-1\}$ with $l_1$ elements which are in an arithmetic progression. Thus
$$\Dh K_{b,D}=\frac{\log l_1}{\log b}=:s_1\geq s.$$
Note that 
\begin{equation*}
s_1-\frac{\log(4+\log(2l_1))}{\log b}
\geq  s_1-s+ s-\frac{\log (4+\log(2b))}{\log b}>\frac{39}{64}.
\end{equation*}
Thus from \eqref{eqn:l1} we have $\dl\mu_{b,D}>\tfrac{39}{64}$, consequently, the proof for $\mu_{b,D}$ follows from Theorem \ref{thm:fractal_withoutbase}. For $\mu_{b,D,x}$
the proof immediately follows from \eqref{eq:haus-shifted-IFS}, by Corollary \ref{cor:convol-infty}, and the proof for $\mu_{b,D}$ as above.
\end{proof}

\begin{remark}
    Obviously, choosing $x\in\R\setminus\Q$ we can make $K_{b,D,x}$ an irrational IFS, thus falls into a realm outside of \cite{khalil2023random}. We remark that also in a very different way we can construct a singular measure, supported on an irrational IFS, that satisfies the conclusion of Theorem \ref{thm:fractal_withoutbase}, as follows.
    Let $\rho$ be sufficiently small such that $(\rho^{-0.609375},\rho^{-1}]\cap\Z_{\geq 2}\neq\varnothing$. Choose $l$ from the above interval. Let $\mu$ be the self-similar measure associated to the IFS $\{\rho (x+i), i=0,\cdots,l-1\}$ and the uniform probability vector. Since this IFS satisfies OSC, $\mu$ is $\frac{-\log l}{\log\rho}$-AD regular (see \S\ref{sec:measure-theory}). Note that $\mu\ast\mu$ is a self-similar measure with (equal) contraction $\rho$; see \cite[pp.809]{strichartz1990selfsimilar}. By \eqref{eqn:l2=s}, $\dl\mu\ast\mu=\dim_{\ell^2}\mu=\frac{-\log l}{\log\rho}>0.609375,$ therefore the corollary follows from Theorem \ref{thm:fractal_withoutbase}. Note that when $\rho^{-1}$ is the (positive) $n$'th root of a positive integer then $\mu\ast\mu$ is not absolutely continuous, since $\widehat\mu\nrightarrow 0$ in this case, which follows from $\widehat\mu(\xi)=\prod_{j=1}^\infty g(\rho^j\xi),\quad g(\xi):=l^{-1}\sum_{i=0}^{l-1}\lambda_ie(i\xi)$; see \cite[Lemma 4.1]{strichartz1990selfsimilar}.
\end{remark}

\begin{lemma}\label{lemma:two dim same}
Let $\mu$ be a self-similar measure whose underlying attractor is a missing digit Cantor set. Then 
    \begin{equation*}
        \dl\mu=\Dl\mu.
    \end{equation*}
Moreover, the same is true when the attractor is a shifted missing digit Cantor set.
\end{lemma}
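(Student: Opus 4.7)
The inequality $\Dl\mu\le\dl\mu$ is immediate from \eqref{eqn:relation_bet_dimen}, so the content of the lemma is the reverse inequality $\dl\mu\le\Dl\mu$. Moreover, since $\mu_{b,D,x}=\mu_{b,D}\ast\delta_x$ satisfies $|\widehat{\mu_{b,D,x}}(\xi)|=|\widehat{\mu_{b,D}}(\xi)|$ pointwise, the shifted case reduces at once to the unshifted one, and it suffices to treat a self-similar measure $\mu$ whose attractor is $K_{b,D}$ itself.

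My plan is to exploit the self-similarity relation $\widehat{\mu}(\xi)=g(\xi/b)\widehat{\mu}(\xi/b)$, where $g(\eta)=\sum_{i\in D}\lambda_i e(i\eta)$ is the $1$-periodic trigonometric polynomial associated with the IFS probability vector $(\lambda_i)_{i\in D}$. Iterating $k$ times gives $\widehat{\mu}(\xi)=G_k(\xi)\widehat{\mu}(\xi/b^k)$, where $G_k(\xi):=\prod_{j=1}^k g(\xi/b^j)$ is $b^k$-periodic. Combining this with the identity $g(n+\eta)=g(\eta)$ for $n\in\Z$ yields the arithmetic relation
\begin{equation*}
\widehat{\mu}(b^k m+r)=G_k(r)\,\widehat{\mu}\!\left(m+\tfrac{r}{b^k}\right),\qquad m\in\Z,\ 0\le r<b^k,
\end{equation*}
which, on summing over all $|b^km+r|\le Y$, implies that for each $r\in\{0,\ldots,b^k-1\}$ with $G_k(r)\neq 0$,
\begin{equation*}
\sum_{|m|\le Y/b^k}\left|\widehat\mu\!\left(m+\tfrac{r}{b^k}\right)\right|\le\frac{1}{|G_k(r)|}\sum_{|n|\le Y}|\widehat\mu(n)|.
\end{equation*}
This bounds the shifted $\ell^1$-sum at any $b$-adic rational shift by the unshifted sum at a larger scale.

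For a general $\theta\in[0,1)$, my plan is to approximate $\theta$ by some $r/b^k$ with $|\theta-r/b^k|\le b^{-k}$, use the Lipschitz continuity of $\widehat\mu$ (which holds since $\supp\mu$ is compact) to convert $\sum_{|m|\le X}|\widehat\mu(m+\theta)|$ into $\sum_{|m|\le X}|\widehat\mu(m+r/b^k)|$ plus an error of order $X/b^k$, and then apply the displayed inequality with $Y=b^k X$. The $\theta$-independent Parseval-type identity $\sum_{m=0}^{b^k-1}|G_k(m+\theta)|^2=\bigl(b\sum_i\lambda_i^2\bigr)^k$ guarantees that $|G_k|$ is not uniformly small on $\{0,\ldots,b^k-1\}$, and a pigeonhole argument lets me choose $r$ close to $b^k\theta$ with $|G_k(r)|\ge X^{-o(1)}$.

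The main obstacle will be the quantitative balancing of competing effects: the Lipschitz error $O(X/b^k)$ forces $b^k$ to be polynomially large in $X$, which blows up the scale in $\sum_{|n|\le b^k X}|\widehat\mu(n)|$, and this in turn must be controlled against the $|G_k(r)|^{-1}$ factor without altering the exponent. Carefully optimizing $k$, together with exploiting that $\widehat\mu$ is continuous with $\widehat\mu(0)=1$ (so that $\sum_{|n|\le Y}|\widehat\mu(n)|$ has the same polynomial growth exponent at all large scales $Y$ up to $X^{o(1)}$ factors), should yield $\sup_\theta\sum_{|m|\le X}|\widehat\mu(m+\theta)|\ll X^{1-\dl\mu+o(1)}$ and hence $\Dl\mu\ge\dl\mu$.
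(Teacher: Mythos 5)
Your reduction of the shifted to the unshifted case via $|\widehat{\mu_{b,D,x}}|=|\widehat{\mu_{b,D}}|$ is fine and is exactly what the paper does, and your self-similarity identity $\widehat{\mu}(b^k m+r)=G_k(r)\,\widehat{\mu}(m+r/b^k)$ is correct. The paper, however, takes a genuinely different route: it does \emph{not} try to relate shifted $\ell^1$-sums to unshifted ones. Instead it cites \cite[Lemma 4.3]{CVY24}, whose proof directly bounds $\sum_{m=0}^{b^N-1}|\widehat\mu(m+\theta)|$ by $b^{N(1-f_1)}$ \emph{uniformly in $\theta$}, using only the product formula $|\widehat\mu(\xi)|\le\prod_{j=1}^N|g(\xi/b^j)|$ together with $|\widehat\mu|\le 1$. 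The supremum over shifts is absorbed into the definition of $f_1$ via the $\max_x$ appearing in \eqref{eqn:f_L}, and both $\dl\mu$ and $\Dl\mu$ are then shown to equal $1-f_1$.

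Your plan to instead transfer from the shifted sum at scale $X$ to the unshifted sum at scale $b^kX$ has a quantitative gap that no choice of $k$ repairs. Write $\ell^\ast:=1-\dl\mu$, so $\sum_{|n|\le Y}|\widehat\mu(n)|\ll Y^{\ell^\ast+\epsilon}$; you want to prove $\sup_\theta\sum_{|m|\le X}|\widehat\mu(m+\theta)|\ll X^{\ell^\ast+\epsilon}$. Your bound reads
\begin{equation*}
\sum_{|m|\le X}|\widehat\mu(m+\theta)|\ \le\ \frac{1}{|G_k(r)|}\,(b^kX)^{\ell^\ast+\epsilon}\ +\ O\!\left(X\,b^{-k}\right).
\end{equation*}
To make the Lipschitz error $\ll X^{\ell^\ast+\epsilon}$ forces $b^k\gg X^{1-\ell^\ast}$. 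But then, even ignoring the factor $|G_k(r)|^{-1}\ge 1$, the main term is at least $(X^{1-\ell^\ast}\cdot X)^{\ell^\ast}=X^{(2-\ell^\ast)\ell^\ast}=X^{\ell^\ast}\cdot X^{\ell^\ast(1-\ell^\ast)}$, which exceeds the target by the positive power $X^{\ell^\ast(1-\ell^\ast)}$ whenever $0<\ell^\ast<1$. The two constraints on $b^k$ (small enough for the rescaled sum, large enough for the Lipschitz error) are incompatible, and ``carefully optimizing $k$'' cannot resolve this: the loss from rescaling the $\ell^1$-sum and the loss from Lipschitz approximation push $b^k$ in opposite directions and the intersection is empty.

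There is also a secondary problem with the pigeonhole step. For a fixed $\theta$ there is essentially a \emph{unique} $r$ with $|\theta-r/b^k|\le b^{-k}$, namely $r=\lfloor b^k\theta\rceil$, so you have no freedom to choose $r$ making $|G_k(r)|$ large. The Parseval identity $\sum_{r}|G_k(r)|^2=(b\sum_i\lambda_i^2)^k$ only tells you that a \emph{typical} $r$ has $|G_k(r)|\asymp(\sum_i\lambda_i^2)^{k/2}$, which decays geometrically in $k$; it does not give you a large value of $|G_k(r)|$ at the specific $r$ dictated by $\theta$. Relaxing the approximation quality of $r/b^k$ to buy room for the pigeonhole only worsens the Lipschitz error term, so the same balancing obstruction reappears.

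The upshot is that the shifted-sum bound really does need to be argued directly, $\theta$-uniformly, rather than deduced by rescaling from the unshifted one. This is precisely what CVY24's Lemma 4.3 accomplishes via the product formula: replace $|\widehat\mu(m+\theta)|$ by $\prod_{j=1}^N|g((m+\theta)/b^j)|$, sum over $m=0,\dots,b^N-1$, and observe that the resulting sum is a Riemann-type sum of a $1$-periodic function whose maximum over the shift is exactly what $f_1$ is built to control. If you want a self-contained proof, that is the computation to redo.
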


\begin{proof}
    We first prove when $\mu$ is a self-similar measure on $K_{b,D}$. The proof of this is essentially available in \cite{CVY24}, although was not pointed out. Note that using \eqref{eqn:relation_bet_dimen} and \cite[Theorem 4.2]{CVY24}, the lemma follows upon showing that for all $L\in\N,$
    \begin{equation}\label{eqn:f_L}
       f_{L}:=\frac{-\log(\max_{x}b^{-L}\sum_{i=0}^{b^{L}-1}S_{L}(x+\frac{i}{b^L}))}{\log b^L}\leq \Dl\mu,
    \end{equation} where $S_{L}$ is as defined in \cite[Equation 4.3]{CVY24}. Then by the argument as in \cite[\S 4.1]{CVY24}, it is enough to prove \eqref{eqn:f_L} for $L=1.$ Now in the proof of \cite[Lemma 4.3]{CVY24} authors show that for any $N\in \Z_{\geq 0}$ and $\theta\in\R,$
    $$
    \sum_{m=0}^{b^N-1} \vert\widehat\mu(m+\theta)\vert\leq b^{N(1-f_1)}.
    $$
    Hence $$
   \sup_{0\leq \theta\leq 1}\sum_{m=0}^{b^N-1} \vert \widehat\mu(m+\theta)\vert\leq b^{N(1-f_1)},
    $$ which implies \eqref{eqn:f_L}.

    To see the second part, we note if $\mu$ is a self-similar measure on $K_{b,D}+x_0$ for some $x_0\in\R$ then by uniqueness of the self-similar measures $\mu=\mu_1\ast\delta_{x_0}$ where $\mu_1$ is a self-similar measure on $K_{b,D}$; see \eqref{eq:haus-shifted-IFS}. Thus $|\widehat\mu|=|\widehat\mu_1|$.
\end{proof}

\begin{prop}\label{thm3}
Let $\mu$ be a self-similar measure with the IFS being irreducible and having OSC. If $$\Dl\mu>\frac{39}{64},$$ then for any $\psi$ non-increasing monotonic function, 
    \begin{equation*}
    \mu(\mathcal{W}(\psi))=
1 \quad \text{ if } \quad  \sum \psi(q)=\infty. 
\end{equation*}
\end{prop}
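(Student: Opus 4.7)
The plan is to establish $\mu(\W(\psi))=1$ via a divergence Borel--Cantelli argument applied to the events $E_q:=\{x\in\R\mid \exists p\in\Z,\,|qx-p|<\psi(q)\}$, for which $\W(\psi)=\limsup_{q\to\infty}E_q$. The main engine will be Proposition \ref{prop:fractal}, whose choice of base point $n(x_0)a(q^{-1})$ is tailor-made to capture the excursion of the horocycle $n(x)a(y)$ near a rational $p/q$.

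First, I would translate $E_q$ into a dynamical event on $[G]$. Writing $x=p/q+t/q$ and using the conjugation identity $n(p/q+t/q)\,a(y)=n(p/q)a(q^{-1})\,n(t)a(qy)$, the portion of $E_q$ near $p/q$ corresponds, at the scale $y=\psi(q)/q$, to the horocycle piece $\{n(p/q)a(q^{-1})n(t)a(\psi(q)):|t|<\psi(q)\}$. I would choose a smooth nonnegative bump $\phi\in C_\b^{2\ell}([G])$ whose value at $n(t)a(\tau)$ approximates $\mathbf{1}_{|t|<\tau}$ uniformly in small $\tau$, and then sum over $p$ modulo $q$. Applying Proposition \ref{prop:fractal} with $x_0=p/q$ at each term should yield
\begin{equation*}
\mu(E_q)=2\psi(q)+O\bigl(q^b\psi(q)^\eta S_{\infty,\ell}(\phi)\bigr).
\end{equation*}
A dyadic decomposition $q\in[2^k,2^{k+1})$ combined with monotonicity of $\psi$ and the hypothesis $\sum\psi(q)=\infty$ then gives $\sum_{q\le N}\mu(E_q)\to\infty$ along those scales for which the error is absorbed by the main term.

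The second ingredient is quasi-independence. For $q_1<q_2$ I would seek $\mu(E_{q_1}\cap E_{q_2})\leq C\,\mu(E_{q_1})\mu(E_{q_2})+\text{(admissible error)}$. A near-identical application of Proposition \ref{prop:fractal} -- now with base point $n(p_2/q_2)a(q_2^{-1})$ and exploiting the nested structure of intervals about $p_1/q_1$ and $p_2/q_2$ -- should yield this, after segregating pairs $(q_1,q_2)$ according to $\gcd(q_1,q_2)$. Combined with the first-moment divergence, the Erd\H{o}s--R\'enyi form of the divergence Borel--Cantelli lemma produces $\mu(\limsup_qE_q)>0$. Since $\W(\psi)$ is invariant under the contractions of the IFS, the OSC plus irreducibility provide a $0$--$1$ law promoting positive measure to full measure.

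The hard part will be the quasi-independence estimate: the polynomial factor $q^b$ from Proposition \ref{prop:fractal} must be balanced against the product $\psi(q_1)\psi(q_2)$ and against the Sobolev norm of a scale-dependent test function detecting $E_{q_1}\cap E_{q_2}$, and pairs with large common factor require separate treatment. A secondary technical point is the smoothing of $\mathbf{1}_{E_q}$ into an element of $C_\b^{2\ell}([G])$: one must mollify carefully so that the loss in $S_{\infty,\ell}$ is only polynomial in $1/\psi(q)$, so as not to destroy the gain $\psi(q)^\eta$ provided by effective equidistribution.
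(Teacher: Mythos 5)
The paper's proof is a single line: it invokes \cite[Theorem 12.1]{khalil2023random}. That result of Khalil--L\"uthi is a ready-made transfer theorem which takes as input an effective equidistribution statement with polynomial dependence on the height of the starting point --- exactly what Proposition \ref{prop:fractal} supplies at $g_0=n(p/q)a(q^{-1})$ with the $q^b$-uniformity --- and returns the Khintchine divergence statement for the self-similar measure (irreducibility and OSC being the hypotheses under which their theorem applies). You are attempting to rebuild that black-box from scratch. The general shape you describe (a divergence Borel--Cantelli, quasi-independence, and a zero--one law) is indeed what sits inside their machinery, but the sketch has concrete problems beyond the quasi-independence step you explicitly defer.

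First, the estimate $\mu(E_q)=2\psi(q)+O\bigl(q^b\psi(q)^\eta S_{\infty,\ell}(\phi)\bigr)$ is not usable. Detecting the window $|t|<\psi(q)$ at $n(p/q)a(q^{-1})n(t)a(\psi(q))$ forces the test function to localize at scale $\psi(q)$, so its Sobolev norm is necessarily $\gg\psi(q)^{-c}$ for some $c>0$; the resulting error $q^b\psi(q)^{\eta-c}$ then dominates the main term $\psi(q)$ for \emph{every} large $q$, since $\eta-c<1$ and $\psi(q)\le 1$. No dyadic rearrangement makes $\sum_q\mu(E_q)$ inherit divergence from $\sum_q\psi(q)$ at this naive level; the Khalil--L\"uthi mechanism works with doubly-averaged counting functions and a carefully organized family of shrinking targets precisely to circumvent this obstruction, and this is where the polynomial-in-$q$ uniformity of Proposition \ref{prop:fractal} is really spent.

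Second, the closing claim that $\W(\psi)$ ``is invariant under the contractions of the IFS'' is false. If $x'=(x+i)/b$ and $|qx-p|<\psi(q)$, then $|qbx'-(p+qi)|<\psi(q)$, i.e.\ $|q'x'-p'|<\psi(q'/b)$ with $q'=qb$; since $\psi$ is non-increasing, $\psi(q'/b)\ge\psi(q')$, which is the inequality in the wrong direction and does not place $x'$ in $\W(\psi)$. So there is no ready-made IFS-equivariance zero--one law, and the zero--one law that does hold here is itself a substantive part of the argument in \cite{khalil2023random}. Even with the first- and second-moment estimates in hand, a divergence Borel--Cantelli argument alone would only yield $\mu(\W(\psi))>0$, not $\mu(\W(\psi))=1$.
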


\begin{proof}
    The proposition follows combining Proposition \ref{prop:fractal} and \cite[Theorem 12.1]{khalil2023random}.
\end{proof}

\begin{proof}[Proof of Theorem \ref{intro_thm3}]
Let $\mu$ be a self-similar measure whose underlying attractor is a shifted missing digit 
Cantor set. Therefore the corresponding IFS satisfies OSC and is irreducible. Also by Lemma \ref{lemma:two dim same}, $\dl\mu=\Dl\mu$. Thus the hypothesis of Proposition \ref{prop:fractal} is valid. Hence the proof follows from Proposition \ref{thm3}.
\end{proof}

\textbf{Added in proof}: In the view of \cite{benard2024khintchin}, one can remove the OSC condition and the dimension restriction in the above proposition.

\section{Equidistribution with Pointiwse Fourier Decay}

In this section, we prove Theorem \ref{thm:smooth} whose proof is significantly different than the proof of Theorem \ref{thm:fractal_withoutbase}. In particular, here we crucially need to exploit the cancellation in the sum of $\lambda(m)\widehat{\mu}(m)$ unlike the proof of Theorem \ref{thm:fractal_withoutbase}.

\begin{lemma}\label{lem:error-term-additive-twist}
    There exists a $d>0$ such that for every $\delta>0$ and $0<\eta<\min(\tfrac{1}{2},\delta)$ we have the following. Let $\pi$ be either cuspidal representation or unitary Eisenstein series and $\varphi\in\pi$. Then we have
    $$\sum_{m\neq 0}\frac{\lambda_\pi(|m|)}{|m|^{1+\delta}}W_\varphi(a(my))\ll_\eta |y|^{\eta}S_{2,d}(\varphi)$$
    as $y\to 0$.
\end{lemma}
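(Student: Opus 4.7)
The plan is to interchange the sum with a Mellin inversion of the Whittaker function and then shift the resulting contour integral. Using \eqref{def:zeta-integral} and Mellin inversion (separating into even and odd parts with respect to $\sgn(y)$), for any $\sigma_0>0$ we may write
$$W_\varphi(a(u))=\sum_{\varepsilon\in\{0,1\}}\frac{\sgn(u)^\varepsilon}{2}\cdot\frac{1}{2\pi i}\int_{(\sigma_0)}Z(s,\varepsilon,W_\varphi)|u|^{-s}\d s.$$
Substituting $u=my$ and interchanging sum and integral (justified by absolute convergence for $\sigma_0$ large and the rapid decay \eqref{eq:rapid-decay}), only the $\varepsilon=0$ part survives since $\lambda_\pi(|m|)$ is even in $m$. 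Thus
$$\sum_{m\neq 0}\frac{\lambda_\pi(|m|)}{|m|^{1+\delta}}W_\varphi(a(my))=\frac{1}{2\pi i}\int_{(\sigma_0)}Z(s,0,W_\varphi)\,|y|^{-s}\,D_\pi(1+\delta+s)\,\d s,$$
where $D_\pi(w):=\sum_{m\neq 0}\lambda_\pi(|m|)|m|^{-w}$ is (up to a factor of $2$) the standard $L$-function $L(w,\pi)$: for cuspidal $\pi$ it equals $2L(w,\pi)$, and for unitary Eisenstein $\pi$ with parameter $s_\pi\in i\R$, by \eqref{eq:formula-hecke-eis} it equals $2\zeta(w+s_\pi)\zeta(w-s_\pi)/\zeta(1+2s_\pi)$.

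Next I would shift the contour from $\Re(s)=\sigma_0$ to $\Re(s)=-\eta$. The integrand $Z(s,0,W_\varphi)$ is holomorphic for $\Re(s)>-\tfrac12$, which accommodates the constraint $\eta<\tfrac12$. The factor $D_\pi(1+\delta+s)$ is entire in the cuspidal case and, in the Eisenstein case, has its only poles in the shifted strip at the points $s=-\delta\pm s_\pi$, which lie on $\Re(s)=-\delta$; since $\eta<\delta$, the new contour $\Re(s)=-\eta$ stays strictly to the right of all these poles, so no residues are picked up.

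On the new contour, $|y|^{-s}=|y|^\eta$, while $D_\pi(1+\delta+s)$ is in the region of absolute convergence $\Re(1+\delta+s)=1+\delta-\eta>1$, hence bounded by $\ll(1+|\nu_\pi|)^{O(1)}(1+|\Im s|)^{O(1)}$ (using the elementary convexity bound for $\zeta$ on vertical lines in the Eisenstein case). Integration by parts in the $y$-variable against $(y\partial_y)^N|y|^{-s}=(-s)^N|y|^{-s}$ shows that $Z(s,0,W_\varphi)\ll_{N,\sigma}(1+|\Im s|)^{-N}S_{2,d_N}(W_\varphi)$ on $\Re(s)=-\eta$, and recalling from \S\ref{sec:whittaker-function} that $S_{2,d}(W_\varphi)\ll S_{2,d}(\varphi)$, we may absorb all polynomial factors in $\Im s$ by choosing $N$ large and obtain
$$\sum_{m\neq 0}\frac{\lambda_\pi(|m|)}{|m|^{1+\delta}}W_\varphi(a(my))\ll_\eta |y|^\eta\, S_{2,d}(\varphi)$$
for a suitable $d=d(\eta)$.

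The main obstacle I anticipate is the Eisenstein case: one must locate the poles of $D_\pi(1+\delta+s)$ precisely and verify that they sit on $\Re(s)=-\delta$ rather than intruding into the strip $-\eta\le\Re(s)\le\sigma_0$, which is exactly why the hypothesis $\eta<\delta$ is sharp. A secondary technical point is controlling the vertical growth of $\zeta$ (and of $1/\zeta(1+2s_\pi)$) with sufficient uniformity in $s_\pi$, but this is handled by the standard bounds recalled in the discussion around \eqref{eq:bound-hecke-eis} together with the rapid decay of $Z(s,0,W_\varphi)$ in $|\Im s|$.
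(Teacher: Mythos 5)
Your argument is correct, but it is substantially more elaborate than the paper's proof of this lemma. The paper observes that one can apply the Whittaker bound \eqref{eq:rapid-decay} \emph{directly} with a (mildly negative) exponent $A=-\eta\in(-\tfrac12,0)$: this gives the pointwise estimate $|W_\varphi(a(my))|\ll|my|^{\eta}S_{2,d}(W_\varphi)$, after which the $m$-sum becomes $\sum_{m\ge1}|\lambda_\pi(m)|\,m^{-(1+\delta-\eta)}$, which converges absolutely because $1+\delta-\eta>1$; one then bounds it by $\nu_\pi^{O(1)}$ via partial summation and the averaged Ramanujan bound \eqref{eq:average-GRC}, and absorbs the $\nu_\pi^{O(1)}$ into $S_{2,d}(\varphi)$. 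So the whole point of the hypothesis $\eta<\min(\tfrac12,\delta)$ in the paper's proof is (i) $\eta<\tfrac12$ so that $A=-\eta$ is admissible in \eqref{eq:rapid-decay}, and (ii) $\eta<\delta$ so that the resulting Dirichlet series is still in the region of absolute convergence.

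Your Mellin-inversion-plus-contour-shift route reaches the same conclusion, and indeed your observations — that the $\varepsilon=1$ part cancels by evenness of $\lambda_\pi$, that the new contour $\Re(s)=-\eta$ stays strictly to the right of the Eisenstein poles at $\Re(s)=-\delta$, that $Z(s,0,W_\varphi)$ decays rapidly in $|\Im s|$, and that $S_{2,d}(W_\varphi)\ll S_{2,d}(\varphi)$ handles the Whittaker-model normalisation — are all correct. The contour method is, however, exactly the technique the paper reserves for the genuinely harder situation in Lemmas \ref{lem:voronoi-cuspidal} and \ref{lem:voronoi-eis}, where the exponent is $\tfrac12+\delta$ rather than $1+\delta$ and absolute convergence fails; there the functional equation of the additively twisted $L$-function and the residues really are needed. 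For the present lemma the direct estimate suffices and is cleaner. One stylistic remark: since your shifted contour $\Re(s)=-\eta$ never leaves the half-plane of absolute convergence of $D_\pi(1+\delta+s)$, the discussion of the location of its poles, while accurate, plays no active role — it just confirms that no residue appears.
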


\begin{proof}
    We use the the bound of Whittaker function \eqref{eq:rapid-decay} with $A=-\eta>-\min(\tfrac{1}{2},\delta)$. Then we bound the sum in the lemma by
    $$\ll |y|^{\eta}S_{2,d}(W_\varphi)\sum_{m\ge 1}\frac{|\lambda_\pi(m)|}{m^{1+\delta-\eta}}.$$
    Thus it suffices to show that the above sum is $\nu_\pi^{O(1)}$.
    To see this, first note that for any $X>1$ applying summation by parts we have
    $$\sum_{m\le X}\frac{|\lambda_\pi(m)|}{m^{1+\delta-\eta}} =X^{-(1+\delta-\eta)}\sum_{m\le X}|\lambda_\pi(m)|+\int_1^X\frac{\sum_{m\le t}|\lambda_\pi(m)|}{t^{2+\delta-\eta}}\d t.$$
    Applying Cauchy--Schwarz and employing \eqref{eq:average-GRC} it follows that
    $$\sum_{m\le t}|\lambda_\pi(m)|\ll_\epsilon t^{1+\epsilon}\nu_\pi^\epsilon.$$
    As $1+\delta-\eta>1$, the proof follows after letting $X\to\infty$.
\end{proof}

Now we will estimate a similar sum as in Lemma \ref{lem:error-term-additive-twist} but $1+\delta$ is replaced by $\tfrac{1}{2}+\delta$, namely,
$$\sum_{m\neq 0}\frac{\lambda_\pi(|m|)}{|m|^{\frac{1}{2}+\delta}}e(m\alpha)W_\varphi(a(my)),\quad\alpha\in\R.$$
To prove a polynomial decay in $y$ for the above sum we can not mimic the proof of Lemma \ref{lem:error-term-additive-twist}, as in this proof the absolute convergence of the Dirichlet series $\sum_{m\neq 0}\frac{\lambda_\pi(|m|)}{|m|^s}$ for $\Re(s)>1$ is crucially used, which will not be available in this case.

We will analyse this sum using a Voronoi-type argument. This argument rather uses the meromorphic properties of the Dirichlet series of the additive twisted Hecke eigenvalues. Such an argument is common in literature when $\pi$ is cuspidal. However, we are unable to find a reference for the same when $\pi$ is Eisenstein. Below we give a proof for the required estimate for all unitary automorphic $\pi$.

As a preparation, we first approximate $\alpha$ by a nonzero rational number $\frac{p}{q}$ with $q\in\N$ in its reduced form so that
\begin{equation}\label{eq:dirichlet}
    |\xi|\le \frac{1}{\sqrt{|y|}q},\quad \xi:=\frac{1}{|y|}\left(\alpha-\frac{p}{q}\right),\quad 1\le q\le |y|^{-\frac{1}{2}}.
\end{equation}
The above is guaranteed because of the Dirichlet's approximation. Finally, we abbreviate $W_\varphi(\cdot n(\xi))$ by $W^\xi$. We start with the following Mellin-theoretic property of $W^\xi$.

\begin{lemma}\label{lem:support-mellin-expansion} 
    If $|\Re(z)|<\tfrac{1}{2}$ and $z$ is away from any pole of $Z(\cdot,W^\xi)$ then 
    $$Z(z,\varepsilon,W^\xi)\ll_N \min\left(\left(\frac{1+|\xi|} {1+|z|}\right)^{N},\left(\frac{1+|\xi|} {1+|z|}\right)^{-2\Re(z)}\right) S_{2,d}(W_\varphi)$$
    for some $d>0$ depending only on $N$.
\end{lemma}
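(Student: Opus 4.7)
The plan is to split the proof into two cases based on the sign of $\Re(z)$, using integration by parts when $\Re(z)\ge0$ and the local functional equation to reduce the case $\Re(z)<0$ to the first case.

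For $\Re(z)\ge 0$, the unipotent equivariance of $W_\varphi$ gives $W^\xi(a(y)) = e(y\xi)W_\varphi(a(y))$, so
$$Z(z,\varepsilon,W^\xi) = \int_{\R^\times} e(y\xi) W_\varphi(a(y)) |y|^z \sgn(y)^\varepsilon \d^\times y,$$
which converges absolutely since $\Re(z)>-1/2$. Using $y\partial_y|y|^z = z|y|^z$ and iterating integration by parts $N$ times extracts a factor of $z^{-N}$ and places $(y\partial_y)^N$ on $e(y\xi)W_\varphi(a(y))$. Leibniz expansion produces $N+1$ summands of the form $c_j P_j(y\xi)e(y\xi)(y\partial_y)^{N-j}W_\varphi(a(y))$, where $P_j$ is a polynomial of degree $j$ satisfying $|P_j(y\xi)|\ll(1+|y\xi|)^j$. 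Using the rapid-decay bound \eqref{eq:rapid-decay} of the derivatives of $W_\varphi$, with $A$ chosen appropriately in the ranges $|y|\le 1$ and $|y|\ge 1$, the integral of each summand against $|y|^{\Re(z)}\,\d^\times y$ is bounded by $|\xi|^j S_{2,d}(W_\varphi)$ for some $d=d(N)$. Summation yields $|Z(z,\varepsilon,W^\xi)|\ll(1+|\xi|)^N |z|^{-N} S_{2,d}(W_\varphi)$, which combined with the trivial bound $|Z|\ll S_{2,d}(W_\varphi)$ for bounded $|z|$ gives the claim in this case.

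For $\Re(z)<0$ (and $z$ away from poles), apply \eqref{eq:local-functional-equation}:
$$Z(z,\varepsilon,W^\xi) = \gamma(\tfrac{1}{2}-z,\pi)\, Z(-z,\varepsilon,W^\xi(\cdot w)).$$
The bound \eqref{eq:bound-gamma-factor} produces $|\gamma(\tfrac{1}{2}-z,\pi)|\ll (1+|z|)^{2\Re(z)}\nu_\pi^{O(1)}$, supplying the desired $(1+|z|)^{2\Re(z)}$ factor. For the dual integral (with $\Re(-z)>0$), use the Iwasawa decomposition $wn(\xi) = n(u(\xi))a(v(\xi))k(\xi)$, with $|v(\xi)|\asymp (1+|\xi|)^{-1}$, $u(\xi)/v(\xi) = \xi$, and $k(\xi)\in K$ bounded, to obtain
$$W^\xi(a(y)w) = e(yu(\xi))\, W_\varphi(a(yv(\xi))k(\xi)).$$
The change of variable $y\mapsto y/v(\xi)$ then reduces the integral to $|v(\xi)|^z\sgn(v(\xi))^\varepsilon\, Z(-z,\varepsilon,(\pi(k(\xi))W_\varphi)^\xi)$. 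Applying the first case to this inner integral, and using $S_{2,d}(\pi(k(\xi))W_\varphi)\ll S_{2,d}(W_\varphi)$ since $K$ is compact, one assembles all factors to obtain the desired bound.

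The main obstacle is the careful bookkeeping of the exponents of $(1+|\xi|)$ and $(1+|z|)$ in the second case: the factor $|v(\xi)|^{\Re(z)}\asymp(1+|\xi|)^{-\Re(z)}$ produced by the substitution interacts nontrivially with the output of Case 1, and one must choose the integer $N$ in Case 1 optimally, depending on $\Re(z)$, to produce the stated sharp exponents. The harmless $\nu_\pi^{O(1)}$ dependence is absorbed via Sobolev norms in subsequent applications.
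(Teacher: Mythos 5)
Your overall strategy matches the paper's: integrate by parts in the region $\Re(z)\ge 0$, then transfer the region $\Re(z)<0$ to the first one via the local functional equation \eqref{eq:local-functional-equation} and the $\gamma$-factor bound \eqref{eq:bound-gamma-factor}. In fact you are \emph{more} careful than the paper on one point: the functional equation produces $Z(-z,\varepsilon,W^\xi(\cdot w))$ on the right-hand side, whereas the paper's proof simply writes $Z(-z,\varepsilon,W^\xi)$ and applies the first case to it directly, quietly discarding the $w$. Your Iwasawa-decomposition reduction is the right way to deal with that extra $w$.

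However, there is a computational error in that very step which propagates into the bookkeeping you defer. Writing $wn(\xi)=n(u)a(v)k$ with $k\in K$, one has $u=\xi/(1+\xi^2)$ and $v=\pm 1/(1+\xi^2)$ (for instance, $wn(\xi)\cdot i$ has imaginary part $(1+\xi^2)^{-1}$, or compute $gg^{T}=n(u)a(v)^2n(u)^{T}$). Hence $|v(\xi)|\asymp(1+|\xi|)^{-2}$, not $(1+|\xi|)^{-1}$; the ratio $u/v=\xi$ is right, but the prefactor you obtain after the substitution $y\mapsto y/v(\xi)$ is $|v(\xi)|^{z}\asymp(1+|\xi|)^{-2\Re(z)}$, twice the exponent you state. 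Tracking the exponents with this corrected $|v|$: applying Case 1 with parameter $N'=N-2|\Re(z)|$ gives $(1+|\xi|)^{N'}(1+|z|)^{-N'}$, the substitution contributes $(1+|\xi|)^{2|\Re(z)|}$, and the $\gamma$-factor contributes $(1+|z|)^{2\Re(z)}$; since $2\Re(z)+2|\Re(z)|=0$ for $\Re(z)<0$, the net result is $\ll_{N}(1+|\xi|)^{N}(1+|z|)^{-N}S_{2,d}(W_\varphi)$, with \emph{no} residual $(1+|z|)^{2\Re(z)}$ gain. So the ``optimal choice of $N$'' you invoke does not in fact produce the stated factor $(1+|z|)^{2\min(0,\Re(z))}$; at best one recovers the same $(1+|\xi|)^{N}(1+|z|)^{-N}$ bound as in Case 1, with constants now depending on $\Re(z)$. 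This is a genuine gap: either the exponent of $(1+|z|)$ in the lemma must be weakened (or one must restrict $\Re(z)$ to a fixed compact interval and absorb $(1+|\xi|)^{2|\Re(z)|}$ into a larger $N$), or one needs an argument that does not pass through the $|v|^{z}$ prefactor at all. Note that the paper's proof, having dropped the $w$, avoids confronting this factor altogether, so your more careful route is exposing an issue the paper glosses over rather than resolving it.

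A minor remark: $S_{2,d}(\pi(k)W_\varphi)=S_{2,d}(W_\varphi)$ exactly for $k\in K$, because $\D=\mathrm{Id}-C_G+2C_K$ commutes with the right regular action of $K$ (both $C_G$ and $C_K$ do), so you do not even need compactness of $K$ beyond this invariance.
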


\begin{proof}
    If $\Re(z)> -\tfrac{1}{2}$ then
    $$Z(z,\varepsilon,W^\xi)=\int_{\R^\times}W(a(t))e(t\xi)|t|^z\sgn(t)^\varepsilon\d^\times t$$
    converges absolutely. 
    We integrate by parts the above with $(t\partial_t)^N$ and apply \eqref{eq:rapid-decay} to obtain
    $$Z(z,\varepsilon,W^\xi)\ll_N \left(\frac{1+|\xi|}{1+|z|}\right)^NS_{2,d}(W)$$
    If $z$ is not a pole of $Z(W^\xi)$ then applying local functional equation as in \eqref{eq:local-functional-equation} we obtain
    $$Z(z,\varepsilon,W^\xi)=\gamma(\tfrac{1}{2}-z,\pi)Z(-z,\varepsilon,W^\xi(\cdot w)).$$
    Let $\xi\neq 0$ and note that
    $$wn(\xi)w = n\left(\xi^{-1}\right)a\left(\xi^{-2}\right)k_\xi\mod \R^\times,\quad k_\xi: = \begin{pmatrix}&-1\\1&\xi^{-1}\end{pmatrix}.$$
    For $\Re(z)<\tfrac{1}{2}$ we compute that
    \begin{equation*}
    Z(-z,\varepsilon,W^\xi(\cdot w))=\int_{\R^\times}(k_\xi w\cdot W)\left(a(t)n(\xi^{-1})a(\xi^{-2})\right)|t|^{-z}\sgn(t)^\varepsilon\d^\times t.
    \end{equation*}
    Changing variable $t\mapsto t\xi^2$ we obtain
    \begin{equation}\label{eq:p-adic-iwasawa}
        Z(-z,\varepsilon,W^\xi(\cdot w))=|\xi|^{-2z}Z(-z,\epsilon, W_1^\xi),\quad W_1:=k_\xi w\cdot W.
    \end{equation}
    Here we note that $k_\xi w$ varies over a fixed compact set if $|\xi|\gg 1$. Thus we estimate the above integral by $\ll_N (1+|\xi|)^N(1+|z|)^{-N}S_{2,d}(W)$ doing integration by parts, as before. 
    The lemma now follows applying \eqref{eq:bound-gamma-factor}.
\end{proof}

\begin{lemma}\label{lem:voronoi-cuspidal}
    There exists a $d>0$ such that for every $0<\delta\le\tfrac{1}{2}$ and every $0<\eta<\tfrac{\delta}{2}$ we have the following. Let $\pi$ be a cuspidal representation of $G$ and $\varphi\in\pi$ be an automorphic form. Let $p,q,\xi$ be as in \eqref{eq:dirichlet}. Then
    $$\sum_{m\neq 0}\frac{\lambda_\pi(|m|)}{|m|^{\frac{1}{2}+\delta}}e\left(m\tfrac{p}{q}\right)W^\xi(a(my)) \ll_\eta |y|^\eta S_{2,d}(\varphi)$$
    as $y\to 0$.
\end{lemma}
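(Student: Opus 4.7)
The plan is a Mellin-transform / Voronoi contour-shift argument: expand $W^\xi(a(my))$ via its Mellin inverse, encode the $m$-sum as a contour integral of the additively twisted Dirichlet series of $\pi$, and shift the contour past the critical strip using the Voronoi-type functional equation.

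More concretely, by Mellin inversion (justified by the rapid decay \eqref{eq:rapid-decay}) and Fubini (justified for $\sigma$ large enough through Cauchy--Schwarz applied to \eqref{eq:average-GRC}), for $\sigma>1/2-\delta$ one transforms the target sum into
\begin{equation*}
\frac{1}{4\pi i}\sum_{\varepsilon\in\{0,1\}}\int_{(\sigma)} D_\varepsilon\!\left(\tfrac{1}{2}+\delta+s;\,\tfrac{p}{q}\right) Z(s,\varepsilon,W^\xi)\,|y|^{-s}\,ds,
\end{equation*}
where $D_\varepsilon(u;\alpha):=\sum_{m\neq 0}\lambda_\pi(|m|)\,\sgn(m)^\varepsilon e(m\alpha)\,|m|^{-u}$.

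For cuspidal $\pi$, the series $D_\varepsilon(\cdot;p/q)$ is entire in $u$ and satisfies a Voronoi-type functional equation (the Mellin-transform incarnation of the classical $\mathrm{GL}(2)$ Voronoi summation) of the shape
\begin{equation*}
D_\varepsilon(u;p/q)=q^{1-2u}\,\gamma_\varepsilon(u;\pi)\,\tilde D_\varepsilon(1-u;\,\overline{p}/q),
\end{equation*}
where $\tilde D_\varepsilon$ is a companion additively-twisted Dirichlet series with the same absolute-convergence behaviour as $D_\varepsilon$ and $|\gamma_\varepsilon(u;\pi)|\ll (1+|\Im u|)^{1-2\Re u}\nu_\pi^{O(1)}$ by Stirling. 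The next step is to shift the contour from $\Re(s)=\sigma$ past the critical strip down to $\Re(s)=-\tfrac{1}{2}-\delta-\epsilon'$ for small $\epsilon'>0$: no residues arise since $D_\varepsilon$ is entire, and the horizontal segments vanish thanks to the rapid $|\Im s|$-decay of $Z(\cdot,\varepsilon,W^\xi)$ granted by Lemma \ref{lem:support-mellin-expansion} (applied with $N$ large).

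On the shifted line, $\Re(1-u)=1+\epsilon'$ lies inside the absolute-convergence region of $\tilde D_\varepsilon$, so $\tilde D_\varepsilon(1-u;\overline{p}/q)=O(\nu_\pi^{O(1)})$; combined with Stirling this yields $|D_\varepsilon(1/2+\delta+s;p/q)|\ll q^{1+2\epsilon'}(1+|\Im s|)^{1+2\epsilon'}\nu_\pi^{O(1)}$. Applying Lemma \ref{lem:support-mellin-expansion} with $N$ taken just large enough to make the $|\Im s|$-integral absolutely convergent, the integrand on this line is bounded by $q^{1+2\epsilon'}(1+|\xi|)^N |y|^{1/2+\delta+\epsilon'}\nu_\pi^{O(1)} S_{2,d}(\varphi)$ times an integrable function of $\Im s$; the $\nu_\pi^{O(1)}$ factor is absorbed by enlarging $d$ (since $S_{2,d}$ detects the $\D$-eigenvalue). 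Finally, invoking the joint Dirichlet conditions $q\le |y|^{-1/2}$ and $q(1+|\xi|)\le 2/\sqrt{|y|}$ from \eqref{eq:dirichlet}, both $q^{1+2\epsilon'}$ and $(1+|\xi|)^N$ get absorbed into $|y|^{-1/2-O(\epsilon')}$, yielding $\ll |y|^{\delta} S_{2,d}(\varphi)$. Since $\eta<\delta/2<\delta$, this is stronger than the claimed $\ll |y|^\eta S_{2,d}(\varphi)$.

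The main obstacle I anticipate is establishing the Voronoi-type functional equation for $D_\varepsilon(\cdot;p/q)$ in sufficient generality---uniformly over cuspidal $\pi$ (holomorphic or Maass) with polynomially controlled dependence on $\nu_\pi$---and in a form clean enough to push through the contour shift. The delicate bookkeeping is that the product $q^{1+2\epsilon'}(1+|\xi|)^N$ must be absorbed into $|y|^{-1/2-O(\epsilon')}$, which is precisely what the conjunction of the two Dirichlet conditions in \eqref{eq:dirichlet} delivers; no subconvexity input is needed.
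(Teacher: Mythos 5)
Your general strategy---Mellin-expand $W^\xi$, swap sum and integral to produce the additively twisted Dirichlet series $D_\varepsilon(\tfrac12+\delta+s;p/q)$, then shift the contour using the functional equation from Kowalski--Michel--Vanderkam---is exactly the paper's. The difference is where you stop: the paper shifts only to $\Re(z)=-\tfrac12+\epsilon$ (keeping $Z(\cdot,W^\xi)$ in its domain of holomorphy and therefore bounding the $L$-function at $\Re=\delta+\epsilon$, inside the critical strip, via Phragm\'en--Lindel\"of), whereas you push to $\Re(s)=-\tfrac12-\delta-\epsilon'$ so as to land $D_\varepsilon$ in its absolute-convergence region after the functional equation.

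That extra push is where the gap is. You claim ``no residues arise since $D_\varepsilon$ is entire,'' but this only accounts for the poles of the arithmetic factor; the analytic factor $Z(\cdot,\varepsilon,W^\xi)$ is \emph{not} entire. It is holomorphic in $\Re(s)>-\tfrac12$ by absolute convergence, and its meromorphic continuation via the local functional equation \eqref{eq:local-functional-equation} inherits the poles of the $\gamma$-factor, which for a tempered principal series (Maass form) with spectral parameter $\nu_\pi=it$ sit at $s=-\tfrac12-2k\mp it$, $k\ge 0$. The leading poles are exactly on $\Re(s)=-\tfrac12$, and your contour shift to $\Re(s)=-\tfrac12-\delta-\epsilon'$ crosses them; this is precisely why Lemma \ref{lem:support-mellin-expansion} is stated ``away from any pole of $Z(\cdot,W^\xi)$'' and why the paper is careful to shift ``without crossing any poles.'' (For discrete series the poles lie farther left, but the estimate must be uniform over all cuspidal $\pi$, including Maass forms.) Moreover, even if you computed the residues at $s_0=-\tfrac12\mp it$, they would involve $D_\varepsilon$ evaluated at $\Re=1/2+\delta+\Re(s_0)=\delta\in(0,\tfrac12]$, i.e.\ squarely in the critical strip, so a convexity-type bound would be needed after all; the apparent savings from landing in the absolute-convergence region would evaporate. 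A final, more minor caveat: the polynomial control in $\nu_\pi$ that you ask of the Voronoi $\gamma$-factors cannot be taken for granted near their poles (Stirling only gives \eqref{eq:bound-gamma-factor} a fixed distance away from poles), which becomes relevant exactly because your contour passes near $\Re(s)=-1/2$.

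To repair the argument you should either stop at $\Re(s)=-\tfrac12+\epsilon$ and invoke the Phragm\'en--Lindel\"of bound \eqref{eq:convexity-additive-twist-cusp} as the paper does (giving $|y|^{\delta/2-\epsilon}$, sufficient for the lemma), or, if you insist on crossing $\Re(s)=-\tfrac12$, carry out the residue calculus for the poles of $Z(\cdot,W^\xi)$ and bound the resulting $D_\varepsilon$-values in the critical strip, which will again require convexity. Your power counting on the far left line is otherwise correct and, were the pole issue absent, would yield the stronger exponent $|y|^{\delta}$; but the claimed shortcut is not available as stated.
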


\begin{proof}
We start with the Mellin expansion of $W^\xi$ given by
$$W^\xi(a(y))=\frac{1}{2}\sum_{\varepsilon\in\{0,1\}}\sgn(y)^{\varepsilon}\int_{\Re(z)=\sigma}Z(z,\varepsilon,W^\xi)|y|^{-z}\frac{\d z}{2\pi i}$$
for some sufficiently positive $\sigma$, where $Z(\cdots)$ is as in \eqref{def:zeta-integral}.
Using this, we write the sum in the lemma as
$$\frac{1}{2}\sum_{\varepsilon\in\{0,1\}}\sgn(y)^{\varepsilon}\int_{\Re(z)=\sigma}Z(z,\varepsilon,W^\xi)|y|^{-z}\sum_{m\neq 0}\frac{\lambda_\pi(|m|)\sgn(m)^\varepsilon e\left(m\tfrac{p}{q}\right)}{|m|^{\frac{1}{2}+z+\delta}}\frac{\d z}{2\pi i}.$$
The above interchange of the $m$-sum and the $z$-integral is justified because the $m$-sum converges absolutely for sufficiently large $\sigma$ (which follows from \eqref{eq:bound-hecke-cusp} and \eqref{eq:bound-hecke-eis}) and $Z(z,\varepsilon,W^\xi)$ decays rapidly in $z$ (which follows from Lemma \ref{lem:support-mellin-expansion}).

Let us consider the $\varepsilon=0$ case to ease the notations; the $\varepsilon=1$ case will be similar. Also, from now on, we will drop $\varepsilon(=0)$ from the notations. Moreover, we only consider the $m>0$ part of the above sum; the $m<0$ part can be treated similarly by replacing $\tfrac{p}{q}$ by its negative.

Correspondingly, we can rewrite the $\varepsilon=0$ summand in the above expression as
\begin{equation}\label{eq:additive-twist-mellin}
    \int_{\Re(z)=\sigma}Z(z,W^\xi)|y|^{-z}L(\tfrac{1}{2}+z+\delta,\tfrac{p}{q},\pi)\frac{\d z}{2\pi i}.
\end{equation}
where we define
$$L\left(z,\tfrac{p}{q},\pi\right):=\sum_{m=1}^\infty\frac{\lambda_\pi(m)e\left(m\tfrac{p}{q}\right)}{m^z}.$$
It follows from \eqref{eq:average-GRC} that the above converges absolutely for $\Re(z)>1$.

As $\pi$ is cuspidal, it is known that $L\left(z,\tfrac{p}{q},\pi\right)$ is entire in $z$; see \cite[Appendix 3-4]{kowalski2002rankin}. On the other hand, $L\left(z,\tfrac{p}{q},\pi\right)$ satisfies the functional equation \cite[eq.(A.10),(A.12),(A.13)]{kowalski2002rankin} of the form
\begin{equation}\label{eq:FE-L-function}
    L\left(z,\tfrac{p}{q},\pi\right) = q^{1-2z}\sum_\pm L\left(1-z,\pm\tfrac{\overline{p}}{q},{\pi}\right)\gamma_\pm(z,\pi)
\end{equation}
for some complex meromorphic functions $\gamma_\pm(z,\pi)$ (given in terms of $C^{\pm}$ as in \cite{kowalski2002rankin}) satisfying
$$\gamma_\pm(z,\pi)\asymp_{\Re(z)} (1+|z|)^{1-2\Re(z)}\nu_\pi^{O_{\Re(z)}(1)}$$
as long as $z$ is a fixed distance away from any pole of $\gamma_\pm(\cdot,\pi)$. The above bound follows from a standard application of Stirling's estimate of $\Gamma$-function; see e.g., \cite[Proposition 9.5-9.6]{cogdell1990arithmetic}. Hence, by the functional equation of the $L$-function and the above bounds of the $\gamma$-factor we have
$$L\left(z,\tfrac{p}{q},\pi\right)\ll_{\Re(z)} \left(q(1+|z|)\right)^{1-2\Re(z)}\nu_\pi^{O_{\Re(z)}(1)},\quad \Re(z)<0.$$
Thus applying Phragm\'en--Lindel\"of convexity principle, we conclude
\begin{equation}\label{eq:convexity-additive-twist-cusp}
L\left(z,\tfrac{p}{q},\pi\right)\ll_\epsilon (q(1+|z|))^{1-\Re(z)+\epsilon}\nu_\pi^{O(1)},\quad 0\le \Re(z)\le 1.
\end{equation}

We first treat the case when $|\xi|\ll 1$. In this case, we shift the contour of \eqref{eq:additive-twist-mellin} to $\sigma=-\tfrac{1}{2}+\epsilon$ without crossing any poles. From Lemma \ref{lem:support-mellin-expansion} and \eqref{eq:convexity-additive-twist-cusp} we estimate this shifted integral by $|y|^{\frac{1}{2}-\epsilon}q^{1-\delta}\ll |y|^{\frac{\delta}{2}-\epsilon}$, which yields the claim in this case.

Now we treat the case when $|\xi|\gg 1$. In this case, we shift the above contour of \eqref{eq:additive-twist-mellin} to $\sigma<-\frac{1}{2}-\delta$. We cross a possible pole of $Z(z, W^\xi)$ when $\Re(z)=-\frac{1}{2}$, whose residue we denote by $\mathcal{R}$. Using the functional equation of the $L$-function as in \eqref{eq:FE-L-function} and $Z(z,W^\xi)$ as in \eqref{eq:local-functional-equation} we rewrite \eqref{eq:additive-twist-mellin} as
\begin{equation}\label{eq:after-voronoi}
\mathcal{R}+\sum_\pm\int_{\Re(z)=\sigma}Z\left(-z,W^\xi(\cdot w)\right)|y|^{-z}\gamma(\tfrac{1}{2}-z,\pi)L(\tfrac{1}{2}-z-\delta,\pm\tfrac{\bar{p}}{q},\pi)q^{-2z-2\delta}\gamma_\pm(\tfrac{1}{2}+z+\delta,\pi)\frac{\d z}{2\pi i}.
\end{equation}
First, we estimate $\mathcal{R}$. From \eqref{eq:local-functional-equation} we see that the poles of $Z(z,W^\xi)$ are of the $\tfrac{1}{2}\pm\mu$ where $\pm\mu$ are the spectral parameters of $\pi$ with $\Re(\mu)=0$. We have
\begin{align*}
\mathcal{R}&=\sum_\pm Z\left(\tfrac{1}{2}\mp\mu,W^\xi(\cdot w)\right)|y|^{-z}\left(\mathrm{Res}_{z=-\frac{1}{2}\pm\mu}\gamma(\tfrac{1}{2}-z,\pi)\right)L(\tfrac{1}{2}+z+\delta,\tfrac{p}{q},\pi)\\
&\ll |y|^{\frac{1}{2}}|\xi|^{\frac{1}{2}+\epsilon}q^{1-\delta}\nu_\pi^{O(1)}\ll |y|^{\frac{\delta}{2}+\epsilon}\nu_\pi^{O(1)}.
\end{align*}
The middle estimate follows from \eqref{eq:convexity-additive-twist-cusp}, \eqref{eq:bound-gamma-factor}, and the fact that
$$Z\left(\tfrac{1}{2}\mp\mu,W^\xi(\cdot w)\right)\ll_{\epsilon}|\xi|^{\frac{1}{2}+\epsilon}\nu_\pi^{O(1)},$$
which follows from \eqref{eq:p-adic-iwasawa}, Lemma \ref{lem:support-mellin-expansion}, and an application of Phragm\'en--Lindel\"of. The last estimate follows from \eqref{eq:dirichlet}.

Now we consider the integral in \eqref{eq:after-voronoi}. Using \eqref{eq:p-adic-iwasawa} and changing variable $z\mapsto -z$ we write this integral as
$$q^{-2\delta}\sum_\pm\sum_{m\neq 0}\frac{\lambda(m)}{|m|^{\frac{1}{2}-\delta}}e\left(\pm m\tfrac{p}{q}\right)\int_{\Re(z)=\sigma}\left|\frac{m}{q^2\xi^2 y}\right|^{-z}Z(z,W_1^\xi)\gamma(\tfrac{1}{2}+z,\pi)\gamma_\pm(\tfrac{1}{2}-z+\delta,\pi)\frac{\d z}{2\pi i}$$
where we choose $\sigma=\frac{1}{2}+\delta+\epsilon$. We claim that the above integral is
$$\ll \left|\frac{m}{q^2\xi^2 y}\right|^{-\sigma}|\xi|^{-2\delta}\nu_\pi^{O(1)}.$$
Thus the total contribution upon applying \eqref{eq:average-GRC} is
$$\ll q^{-2\delta}q^{1+2\delta+2\epsilon}|\xi|^{1+2\delta+2\epsilon}y^{\frac{1}{2}+\delta+\epsilon}|\xi|^{-2\delta}\ll |y|^{\delta-\epsilon}$$
where the last estimate follows from \eqref{eq:dirichlet}. Thus it is enough to show the above claim.

Now we are going to prove the claim that
$$\int_{\R}\int_{\R^\times}W_2(t)e(\xi t)|t|^{ir}\gamma(\tfrac{1}{2}+\sigma+ir,\pi)\gamma_\pm(\tfrac{1}{2}-\sigma+\delta-ir,\pi)\d^\times t\d r\ll |\xi|^{-2\delta}\nu_\pi^{O(1)}$$
where $W_2:=W_1\cdot |\cdot|^\sigma$ satisfying
$$(t\partial_t)^jW_2 \ll_j \nu_\pi^{O(1)}.$$
By a standard stationary phase computation (see e.g., \cite[Lemma 2.4]{godber2013additive}) the inner integral evaluates to
$$|r|^{-\frac{1}{2}}W_2\left(-\tfrac{r}{\xi}\right)e\left(r\log\left|\tfrac{r}{e\xi}\right|\right) +O_N\left((|\xi|+|r|)^{-N}\nu_\pi^M\right)$$ up to a fixed constant. Using the bounds of the gamma factor as in \eqref{eq:bound-gamma-factor} we see that the error term above contributes negligibly. To estimate the main term contribution we first see that
$$\gamma(\tfrac{1}{2}+\sigma+ir,\pi)\gamma_\pm(\tfrac{1}{2}-\sigma+\delta-ir,\pi) = |r|^{-2\delta}g(r)$$
where $g$ is a \emph{flat} function, that is, $\partial^j g(r)\ll_j (1+|r|)^{-j}$ with at most polynomial dependency on $\pi$. This follows from the functional equation $\gamma(1/2+s,\pi)=\gamma(1/2-s,\pi)^{-1}$ and \cite[Lemma 1, \S2.2.5]{blomer2023weyl} (in particular, the above has no oscillatory behaviour). Thus the main term equals
$$\int_\R|r|^{-\frac{1}{2}-2\delta}W_2\left(-\tfrac{r}{\xi}\right)g(r)e\left(r\log\left|\tfrac{r}{e\xi}\right|\right)\d r.$$
Note that trivial bound of the above is $|\xi|^{-2\delta+\frac{1}{2}}$. We get a square-root cancellation after another stationary phase estimate.
This is exactly done in the proof of \cite[Lemma 2.3, Lemma 3.3]{godber2013additive}, applying which we conclude the proof.
\end{proof}

\begin{lemma}\label{lem:additive-twist-cuspidal}
    There exists an $\ell>0$ such that for every $0<\delta\le\tfrac{1}{2}$, and every $0<\eta<\tfrac{\delta}{2}$ we have the following. For any $0\le\alpha<1$ and any $\phi\in C^{2\ell}_\b([G])$ we have
    $$\sum_{\pi\text{ cuspidal}}\sum_{\varphi\in\B(\pi)}\frac{\langle \phi,\varphi\rangle_{[G]}}{\|\varphi\|^2_\pi}\sum_{m\neq 0}\frac{\lambda_\pi(|m|)}{|m|^{\frac{1}{2}+\delta}}W_\varphi(a(my))e(m\alpha)\ll_\eta |y|^{\eta}S_{\infty,\ell}(\phi)$$
    as $y\to 0$.
\end{lemma}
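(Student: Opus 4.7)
The plan is to reduce the inner $m$-sum to the exact form handled by Lemma \ref{lem:voronoi-cuspidal}, and then carry out the spectral summation via the same Sobolev/trace-class recipe used in the proof of Lemma \ref{lem:spectral-decomposition}.

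First, for each $y$ (say $y>0$, the other sign being symmetric) I would apply Dirichlet's approximation to $\alpha$ as in \eqref{eq:dirichlet}, producing $p/q$ and $\xi$ with $y\xi=\alpha-p/q$ and $|\xi|\le 1/(\sqrt{|y|}q)$. Combining the unipotent equivariance of $W_\varphi$ with the identity $a(t)n(\xi)=n(t\xi)a(t)$ (valid for every $t\in\R^\times$) yields
$$W_\varphi(a(my))\,e(m\alpha)=e(mp/q)\,W_\varphi(a(my)n(\xi))=e(mp/q)\,W^\xi(a(my)).$$
Hence the inner sum in the statement becomes $\sum_{m\neq 0}\frac{\lambda_\pi(|m|)}{|m|^{1/2+\delta}}e(mp/q)W^\xi(a(my))$. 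Splitting into $m>0$ and $m<0$ (the latter is reduced to the former by replacing $p/q$ with $-p/q$, exactly as at the start of the proof of Lemma \ref{lem:voronoi-cuspidal}) and applying that lemma twice gives the pointwise bound
$$\left|\sum_{m\neq 0}\frac{\lambda_\pi(|m|)}{|m|^{1/2+\delta}}W_\varphi(a(my))e(m\alpha)\right|\ll_\eta |y|^\eta\, S_{2,d}(\varphi),$$
uniform in $\alpha$ and in $\pi,\varphi$, for some $d>0$.

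Next, to control the spectral sum, I would use that each $\varphi\in\B(\pi)$ is a $\D$-eigenfunction with eigenvalue $\lambda_\varphi$. Integrating by parts against $\D^l$ and applying Lemma \ref{lem:l1-norm-bound} yields
$$\frac{|\langle \phi,\varphi\rangle_{[G]}|}{\|\varphi\|_\pi^2}\,S_{2,d}(\varphi)\ll \lambda_\varphi^{-l+d'}\,S_{\infty,l}(\phi),$$
where $d'$ absorbs the factor $d$ (via $S_{2,d}(\varphi)\ll\lambda_\varphi^{d}\|\varphi\|_\pi$ for eigenvectors) and the order of the Sobolev embedding. Multiplying with the previous bound and summing,
$$\sum_{\pi\text{ cusp}}\sum_{\varphi\in\B(\pi)}\frac{|\langle\phi,\varphi\rangle_{[G]}|}{\|\varphi\|_\pi^2}\,|y|^\eta S_{2,d}(\varphi)\ll |y|^\eta\, S_{\infty,l}(\phi)\sum_{\pi\text{ cusp}}\mathrm{trace}\left(\D^{-l+d'}\vert_\pi\right).$$
By Lemma \ref{lem:trace-class-pi} together with Weyl's law $\#\{\pi\text{ cusp}\mid |\nu_\pi|\le X\}\ll X^{O(1)}$, the series on the right converges absolutely once $l$ is taken sufficiently large; setting $\ell$ equal to any such $l$ gives the claim.

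The substantive analytic obstacle, namely extracting the $|y|^\eta$ saving from the additively twisted spectral sum, is already done inside Lemma \ref{lem:voronoi-cuspidal} via the Voronoi-type contour shift using the meromorphic continuation and functional equation of $L(z,p/q,\pi)$. The remaining work is purely bookkeeping of Sobolev indices in order to absorb the polynomial growth in $\nu_\pi$ and make the trace over the cuspidal spectrum converge, precisely as in the proof of Lemma \ref{lem:spectral-decomposition}; no new ideas are required beyond those already in place.
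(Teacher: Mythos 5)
Your proof is correct and follows essentially the same route as the paper's: reduce the inner sum to the $W^\xi$, $e(mp/q)$ form via Dirichlet approximation and unipotent equivariance, invoke Lemma \ref{lem:voronoi-cuspidal} for the pointwise bound uniform in $\pi$, and then sum over the cuspidal spectrum using the $\D$-eigenfunction integration-by-parts and trace-class bookkeeping already deployed in Lemma \ref{lem:spectral-decomposition}. The only cosmetic difference is that you split the $m$-sum by sign before applying Lemma \ref{lem:voronoi-cuspidal}, which is unnecessary since that lemma as stated already covers $m\neq 0$; otherwise the argument is identical.
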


\begin{proof}
    Using unipotent equivariance of $W$ as in \eqref{eq:unipotent-equaivariance} and Dirichlet approximation \eqref{eq:dirichlet} we deduce
    $$W_\varphi(a(my))e(m\alpha)=W^\xi(a(my))e\left(m\tfrac{p}{q}\right).$$
    Thus we write the sum in the Lemma as
    $$\sum_{m\neq 0}\frac{\lambda_\pi(|m|)}{|m|^{\frac{1}{2}+\delta}}W^\xi(a(my))e\left(m\tfrac{p}{q}\right).$$
    We conclude using Lemma \ref{lem:voronoi-cuspidal} that
    $$\sum_{m\neq 0}\frac{\lambda_\pi(|m|)}{|m|^{\frac{1}{2}+\delta}}W_\varphi(a(my))e(m\alpha)\ll |y|^{\eta}S_{2,d}(\varphi).$$
    for cuspidal $\pi$. Working as in the proof of Lemma \ref{lem:spectral-decomposition} we see that 
    $$\sum_{\pi\text{ cuspidal}}\sum_{\varphi\in\B(\pi)}\frac{\langle \phi,\varphi\rangle_{[G]}}{\|\varphi\|^2_\pi}S_{2,d}(\varphi)$$
    converges absolutely and is $O\left(S_{\infty,\ell}(\phi)\right)$.
\end{proof}

\begin{lemma}\label{lem:voronoi-eis}
    There exists a $d>0$ such that for every $0<\delta\le\tfrac{1}{2}$ and every $0<\eta<\tfrac{\delta}{2}$ we have the following. Let $\pi$ be a unitary Eisenstein series with parameter $s\in i\R$ and $\varphi=\Eis(f)$ with $f\in\I(s)$. Let $p,q,\xi$ be as in \eqref{eq:dirichlet}. Then
    $$\sum_{m\neq 0}\frac{\lambda_\pi(|m|)}{|m|^{\frac{1}{2}+\delta}}e\left(m\tfrac{p}{q}\right)W^\xi(a(my)) = \mathcal{M}+ O_\eta\left(|y|^\eta S_{2,d}(\varphi)\right)$$
    where 
    \begin{equation}\label{eq:poles-L-fn}
    \mathcal{M}:=\sum_\pm Z(\tfrac{1}{2}-\delta\pm s,W^\xi)|y|^\delta\left(\sqrt{|y|}q\right)^{-1\mp 2s}\frac{\zeta(1\pm 2s)}{\zeta(1+2s)},
    \end{equation}
    as $y\to 0$.
\end{lemma}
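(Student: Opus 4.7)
\noindent\emph{Proof proposal.} The plan is to mirror the proof of Lemma \ref{lem:voronoi-cuspidal}, with the key new feature that for Eisenstein $\pi$ the additive-twist Dirichlet series
\[
L\!\left(w, \tfrac{p}{q}, \pi\right) := \sum_{m \geq 1} \frac{\lambda_\pi(m)\, e(mp/q)}{m^w}
\]
is no longer entire; the main term $\mathcal{M}$ will arise as the contribution from its poles. First I would Mellin-invert $W^\xi(a(my))$, split off the $\sgn(m)$-parities, and rewrite the $m > 0$ contribution to the sum as
\[
\int_{\Re(z) = \sigma} Z(z, W^\xi)\, |y|^{-z}\, L\!\left(\tfrac{1}{2} + z + \delta, \tfrac{p}{q}, \pi\right) \frac{dz}{2\pi i}
\]
for $\sigma$ large; the $m < 0$ piece is handled identically after replacing $p$ by $-p$. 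The task is then to shift the contour to $\Re(z) = -\tfrac{1}{2} + \epsilon$, extracting the pole residues (which will produce $\mathcal{M}$) and bounding the remainder integral.

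Using \eqref{eq:formula-hecke-eis}, I would next locate the poles of $L(w, p/q, \pi)$. Writing $\tau_{2s}(m) = \sum_{d \mid m} d^{2s}$ gives
\[
\sum_{m \geq 1} \frac{\tau_{2s}(m)\, e(mp/q)}{m^{w+s}} = \sum_{d \geq 1} d^{\,2s - w - s} \sum_{k \geq 1} \frac{e(dkp/q)}{k^{w+s}};
\]
since $\gcd(p,q) = 1$, the inner series has a simple pole in $w+s$ at $1$ precisely when $q \mid d$, and is entire otherwise. Summing the corresponding residues gives $\sum_{q \mid d} d^{\,2s-1} = q^{2s-1}\,\zeta(1-2s)$, so that
\[
\operatorname{Res}_{w = 1-s} L\!\left(w, \tfrac{p}{q}, \pi\right) = \frac{q^{2s-1}\, \zeta(1-2s)}{\zeta(1+2s)}.
\]
The symmetry $\tau_{2s}(m) = m^{2s} \tau_{-2s}(m)$ applied with $s \mapsto -s$ produces the second pole at $w = 1+s$ with residue $q^{-2s-1}$. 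Translating $w = \tfrac{1}{2} + z + \delta$, the two poles sit at $z = \tfrac{1}{2} - \delta \pm s$; pairing $|y|^{-z}$ with $|y|^\delta$ and collecting the $q$-powers into $(\sqrt{|y|}\,q)^{-1 \mp 2s}$ reproduces exactly the two summands of $\mathcal{M}$ in \eqref{eq:poles-L-fn}.

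It remains to control the shifted integral on $\Re(z) = -\tfrac{1}{2} + \epsilon$. For this I would establish a convexity bound of the form
\[
L\!\left(w, \tfrac{p}{q}, \pi\right) \ll_\epsilon \bigl(q(1+|w|)\bigr)^{1 - \Re(w) + \epsilon} (1+|s|)^{O(1)}
\]
throughout the critical strip, bounded away from the two poles. Since $L(w, p/q, \pi)$ decomposes as a finite linear combination of Hurwitz-type Dirichlet $L$-functions (one per residue class modulo $q$), the necessary functional equation is classical; together with Stirling bounds for the associated $\Gamma$-factors, Phragm\'en--Lindel\"of yields the stated bound. Combined with the rapid decay of $Z(z, W^\xi)$ from Lemma \ref{lem:support-mellin-expansion}, the Dirichlet approximation $|\xi| \leq 1/(\sqrt{|y|}\,q)$ from \eqref{eq:dirichlet}, and the choice $\sigma = -\tfrac{1}{2} + \epsilon$, the shifted integral is bounded by $O(|y|^{\delta/2 - \epsilon} S_{2,d}(\varphi))$, matching the claim. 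The main anticipated obstacle is the residue bookkeeping --- verifying that both polar contributions add up to precisely \eqref{eq:poles-L-fn} with the correct $\zeta$-ratios and $q$-powers --- together with the degenerate case $s = 0$, in which the two poles coalesce into a double pole; a limiting argument resolves this with only an additional logarithmic factor, which is absorbed into the error.
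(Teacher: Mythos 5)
Your proposal is correct and follows the same contour-shift/Mellin-inversion strategy as the paper's proof: Mellin-invert $W^\xi$, write the sum as $\int_{\Re(z)=\sigma} Z(z,W^\xi)|y|^{-z}L(\tfrac12+z+\delta,\tfrac pq,\pi)\,\tfrac{dz}{2\pi i}$, shift to $\Re(z)=-\tfrac12+\epsilon$, extract the two polar residues to get $\mathcal{M}$, and bound the remainder via a convexity bound together with Lemma~\ref{lem:support-mellin-expansion} and~\eqref{eq:dirichlet}.

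The genuine point of variance is how you identify the poles and justify the convexity bound. The paper outsources both to Bettin's explicit functional equation for the shifted-divisor Dirichlet series $D(z,s,\tfrac pq)$ (\cite[Lemma 2]{bettin2013period}), reading off the pole locations $z=1\pm s$, the residues $q^{-1\mp 2s}\zeta(1\pm2s)/\zeta(1+2s)$, and the Stirling bound on the $\Gamma$-factors from there. You instead compute the residues directly from the double sum $\sum_{d}d^{2s-w-s}\sum_k e(dkp/q)k^{-(w+s)}$, noting that only the $q\mid d$ terms carry the pole of $\zeta$ and summing $\sum_{q\mid d}d^{2s-1}=q^{2s-1}\zeta(1-2s)$, then using $\tau_{2s}(m)=m^{2s}\tau_{-2s}(m)$ for the second pole. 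I checked this calculation and it reproduces exactly the residues implicit in~\eqref{eq:poles-L-fn}; it is a nice, self-contained alternative to the citation. The weaker link is your convexity bound: you say $L$ decomposes as a \emph{finite linear combination of Hurwitz-type $L$-functions}, but the natural decomposition via the $(d,k)$ double sum actually yields a finite sum over $(a,b)\bmod q$ of \emph{products} of two Hurwitz/periodic zeta functions, and one must verify that the resulting bound carries the correct power $q^{1-\Re(w)+\epsilon}$ rather than a worse $q$-dependence coming from summing $O(q^2)$ terms. This is doable (it is essentially what Bettin proves), but it is the one step you would need to flesh out; the paper sidesteps it by invoking the ready-made functional equation. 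Finally, your remark about the degenerate $s=0$ double pole is a legitimate observation; the paper does not address it explicitly, which is harmless in the subsequent Lemma~\ref{lem:additive-twist-eisenstein} since $s=0$ is a measure-zero point on the contour, but if one wants the statement of Lemma~\ref{lem:voronoi-eis} to hold verbatim at $s=0$ then a limiting argument (with a benign logarithmic loss absorbed into the error) is indeed needed, as you say.
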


\begin{proof}
Working as in the proof of cuspidal case (with the same conventions and notations), we write the sum in the lemma as
\begin{equation}\label{eq:mellin-additive-eis}
    \int_{\Re(z)=\sigma}|y|^{-z}Z(z,W^\xi)L(\tfrac{1}{2}+z+\delta,\tfrac{p}{q},\pi)\frac{\d z}{2\pi i}
\end{equation}
for some $\sigma>\tfrac{1}{2}-\delta$.
Here from \eqref{eq:formula-hecke-eis} it follows that
$$L\left(z,\tfrac{p}{q},\pi\right)=\frac{D\left(z+s,2s,\tfrac{p}{q}\right)}{\zeta(1+2s)}$$
where 
$$D\left(z,s,\tfrac{p}{q}\right):=\sum_{m=1}^\infty\frac{\tau_s(m)e\left(m\tfrac{p}{q}\right)}{m^z},$$ 
as defined in \cite[pp.228]{bettin2013period}. As before, we first determine holomorphic and growth properties of $L\left(z,\tfrac{p}{q},\pi\right)$ for $0\le\Re(z)\le 1$ and $\Re(s)=0$.

Clearly, for $\Re(z)>1$ we have $D\left(z+s,2s,\tfrac{p}{q}\right)\ll_{\Re(z)}1$. On the other hand, $D$ satisfies a functional equation, namely,
\begin{multline*}
    D\left(z+s,2s,\tfrac{p}{q}\right)=-\frac{2}{q}\left(\frac{q}{2\pi}\right)^{2-2z}\Gamma(1-z+s)\Gamma(1-z-s)\\
    \times\left[\cos(\pi z)D(1-z-s,-2s,-\tfrac{\overline{p}}{q})-\cos(\pi s)D(1-z-s,-2s,\tfrac{\overline{p}}{q})\right],
\end{multline*}
which follows from \cite[Lemma 2]{bettin2013period}. Using Stirling's estimate (see, e.g., \cite[eq.(9.1)]{cogdell1990arithmetic})
$$\Gamma(\sigma+it)\ll_\sigma \exp\left(-\frac{\pi}{2}|t|\right)|\sigma+it|^{\sigma-\frac{1}{2}},\quad \sigma,t\in\R, \sigma\notin\Z_{\le 0}$$
for $\Re(s)=0$ we estimate
$$\Gamma(1-z+s)\Gamma(1-z-s)\ll_{\Re(z)}\exp(-\pi\max(|\Im(z)|,|s|))(1+|z|^2+|s|^2)^{\frac{1}{2}-\Re(z)}.$$
Moreover, using that $\cos(\pi z)\asymp_{\Re(z)}\exp(\pi |\Im(z)|)$, we conclude
$$D\left(z+s,2s,\tfrac{p}{q}\right)\ll (q(1+|z|))^{1-2\Re(z)}(1+|s|)^{O_{\Re(z)}(1)},\quad\Re(z)<0,\Re(s)=0.$$
Thus, using Phragm{\'e}n--Lindel\"of convexity principle and $\zeta(1+s)^{-1}\ll (1+|s|)^\epsilon$ for $\Re(s)=0$ (see \cite[pp.50-51]{Tit86}) we obtain
\begin{equation}\label{eq:convexity-shifted-divisor}
    L\left(z,\tfrac{p}{q},\pi\right)\ll_\epsilon(q(1+|z|))^{1-\Re(z)+\epsilon}\nu_\pi^{O(1)},\quad 0\le \Re(z)\le 1.
\end{equation}
On the other hand, from \cite[Lemma 2]{bettin2013period} we deduce that $L\left(z,\tfrac{p}{q},\pi\right)$ has poles in $z$ exactly at the poles of
$$q^{1-2z}\frac{\zeta(z-s)\zeta(z+s)}{\zeta(1+2s)}$$
which are at $z=1\pm s$ and of order $1$ with residues
$q^{-1\mp 2s}\frac{\zeta(1\pm 2s)}{\zeta(1+2s)}$, respectively. 

Now, as in the proof of the cuspidal case, using holomorphic and decay properties of $Z(z,W^\xi)$ for $\Re(z)>-\frac{1}{2}$ as in Lemma \ref{lem:support-mellin-expansion} we shift the contour of \eqref{eq:mellin-additive-eis} to $\Re(z)=\sigma$ where $-\frac{1}{2}<\sigma<0$. Using Cauchy's theorem we can write \eqref{eq:mellin-additive-eis} as the sum of
$$\mathcal{M}=\sum_\pm Z(\tfrac{1}{2}-\delta\pm s,W^\xi)|y|^\delta\left(\sqrt{|y|}q\right)^{-1\mp 2s}\frac{\zeta(1\pm 2s)}{\zeta(1+2s)},$$
and the shifted contour integral \eqref{eq:mellin-additive-eis}. Working exactly as in the cuspidal case, in particular, using Lemma \ref{lem:support-mellin-expansion} and \eqref{eq:convexity-shifted-divisor}, and choosing $\sigma=-\frac{1}{2}+\epsilon$ we bound the shifted contour integral \eqref{eq:mellin-additive-eis} by $O_\epsilon\left(|y|^{\frac{\delta}{2}-\epsilon}\nu_\pi^{O(1)}\right)$.
\end{proof}

\begin{lemma}\label{lem:additive-twist-eisenstein}
    There exists ann $\ell>0$ such that for every $0<\delta\le\tfrac{1}{2}$ and every $0<\eta<\tfrac{\delta}{2}$ we have the following. For any $0\le\alpha<1$ and any $\phi\in C^{2\ell}_\b([G])$ we have
    $$\int_{\Re(s)=0}\sum_{f\in\B(\I(s))}\frac{\langle \phi,\Eis(f)\rangle_{[G]}}{\|f\|^2_{\I(s)}}\sum_{m\neq 0}\frac{\lambda_s(m)}{|m|^{\frac{1}{2}+\delta}}W_f(a(my))e(m\alpha)\frac{\d s}{2\pi i}\ll_\eta |y|^{\eta}S_{\infty,\ell}(\phi)$$
    as $y\to 0$. Here $\lambda_s$ we mean $\lambda_\pi$ where $\pi$ is the Eisenstein series with parameter $s$.
\end{lemma}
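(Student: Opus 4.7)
The plan is to mirror the cuspidal case (Lemma \ref{lem:additive-twist-cuspidal}), with the additional task of controlling the extra main term $\mathcal{M}$ that Lemma \ref{lem:voronoi-eis} produces for Eisenstein series but not for cusp forms. First, I would apply the same preliminary setup: Dirichlet's approximation \eqref{eq:dirichlet} writes $\alpha = p/q + |y|\xi$ with $1 \le q \le |y|^{-1/2}$ and $|\xi| \le (\sqrt{|y|}q)^{-1}$, unipotent equivariance of the Whittaker function converts $W_f(a(my))e(m\alpha)$ into $W_f^\xi(a(my))e(mp/q)$, and Lemma \ref{lem:voronoi-eis} splits the inner sum as $\mathcal{M} + O_\eta(|y|^\eta S_{2,d}(\Eis(f)))$. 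The error contribution, once spectrally integrated against $\phi$, yields the desired bound $O(|y|^\eta S_{\infty,\ell}(\phi))$ by the standard argument of integration by parts against the elliptic operator $\D$ together with Lemma \ref{lem:trace-class-pi} and Weyl's law, exactly as in the end of the proof of Lemma \ref{lem:additive-twist-cuspidal}.

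For the main term $\mathcal{M}$, the key new step is a symmetrisation via the Eisenstein functional equation. From \eqref{eq:functional-eqn-eis} one has $\frac{\zeta(1-2s)}{\zeta(1+2s)}\langle\phi,\Eis(f)\rangle = \langle\phi,\Eis(M^\ast(s)f)\rangle$; combining this with $W_f = W_{M^\ast(s)f}$ from \eqref{eq:whittaker-intertwiner-invariance}, the isometry \eqref{eq:intertwiner-isometry}, the change of orthogonal basis $f \mapsto M^\ast(s)f$ bijecting $\B(\I(s))$ with a basis of $\I(-s)$, and the substitution $s \to -s$ in the outer contour, one verifies that the ``$-$''-summand of $\mathcal{M}$ contributes to the spectral integral identically with the ``$+$''-summand. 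Thus the main-term contribution equals $2T^+$ where
$$T^+ = \frac{|y|^\delta}{\sqrt{|y|}q}\int_{\Re(s)=0}\sum_{f\in\B(\I(s))}\frac{\langle\phi,\Eis(f)\rangle}{\|f\|^2_{\I(s)}}Z\bigl(\tfrac{1}{2}-\delta+s,W_f^\xi\bigr)(\sqrt{|y|}q)^{-2s}\frac{ds}{2\pi i}.$$

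To bound $T^+$ I would shift the contour from $\Re(s)=0$ to $\Re(s) = -\eta'$ for a small $\eta' \in (0,\tfrac{1}{2})$. The integrand is holomorphic in the strip $-\eta' \le \Re(s) \le 0$ since $Z(\tfrac{1}{2}-\delta+s, W_f^\xi)$ is holomorphic for $\Re(\tfrac{1}{2}-\delta+s) > -\tfrac{1}{2}$ (cf.\ Lemma \ref{lem:support-mellin-expansion}) and $\Eis(f)$ for the flat family is holomorphic in $|\Re(s)|<\tfrac{1}{2}$. On the shifted contour, $|(\sqrt{|y|}q)^{-2s}| = (\sqrt{|y|}q)^{2\eta'}$ supplies the gain. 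Combining this with Lemma \ref{lem:support-mellin-expansion} (with $N$ chosen just above the convergence threshold), the bound $(1+|\xi|) \le 1 + (\sqrt{|y|}q)^{-1}$, the Dirichlet constraint $q \le |y|^{-1/2}$, and uniform convergence of the spectral sum-integral on the shifted contour (for which one controls $\langle\phi,\Eis(f)\rangle$ just off the unitary axis via Remark \ref{rem:non-unitary} and the functional equation), one obtains $T^+ \ll_\eta |y|^\eta S_{\infty,\ell}(\phi)$ for the required $\eta$. The main technical obstacle is balancing the shift so that the $(\sqrt{|y|}q)^{2\eta'}$-gain overcomes the potentially large prefactor $|y|^{\delta-1/2}q^{-1}$; because the Eisenstein holomorphy strip $|\Re(s)|<\tfrac{1}{2}$ limits how far one may shift, the optimal simultaneous choice of $\eta'$ and $N$, together with a careful bookkeeping of the Dirichlet factors, is delicate.
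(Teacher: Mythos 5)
Your overall architecture matches the paper's: the same Dirichlet reduction, the same use of Lemma~\ref{lem:voronoi-eis}, the same handling of the error term, and — most importantly — the same key observation that the ``$-$'' half of $\mathcal{M}$ can be folded into the ``$+$'' half via \eqref{eq:functional-eqn-eis}, \eqref{eq:whittaker-intertwiner-invariance}, \eqref{eq:intertwiner-isometry}, the basis change $f\mapsto M^\ast(s)f$, and the substitution $s\mapsto -s$. All of that is correct and is exactly what the paper does.

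The gap is in the final quantitative step. You propose to shift the $s$-contour to $\Re(s)=-\eta'$ ``for a small $\eta'$,'' but this is not enough in general, and the tension you flag at the end is a real one, not merely a delicate bookkeeping issue. Concretely: on $\Re(s)=-\eta'$ the prefactor $|y|^\delta(\sqrt{|y|}\,q)^{-1-2s}$ has modulus $|y|^\delta(\sqrt{|y|}\,q)^{-1+2\eta'}$, and the worst case (e.g.\ $\alpha=0$, so $q=1$, $\xi=0$) gives $|y|^{\delta-\frac{1}{2}+\eta'}$. To make this a \emph{positive} power of $|y|$ you must take $\eta'>\frac{1}{2}-\delta$; since $\delta$ can be arbitrarily close to $0$, this forces $\eta'$ arbitrarily close to $\frac{1}{2}$, i.e.\ you must shift essentially to the edge of the holomorphy strip. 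The paper therefore shifts all the way to $\Re(s)=-\frac{1}{2}+\epsilon$, where the prefactor becomes $|y|^\delta(\sqrt{|y|}\,q)^{-2\epsilon}\ll|y|^{\delta-\epsilon}$, and the exponent $N$ in Lemma~\ref{lem:support-mellin-expansion} can be taken tiny (the $s$-sum-integral converges anyway thanks to integration by parts of $\langle\phi,\Eis(f)\rangle$ against $\D$ and Lemma~\ref{lem:trace-class-pi}, not thanks to the $N$-decay of $Z$), so the loss from $(1+|\xi|)^N\le(\sqrt{|y|}\,q)^{-N}$ can be made negligible. With that adjustment — $\eta'=\frac{1}{2}-\epsilon$ and $N$ chosen small rather than ``just above a convergence threshold'' — your argument closes and coincides with the paper's. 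You should also note, as the paper does, that the holomorphy of $\frac{\langle\phi,\Eis(f)\rangle}{\|f\|^2_{\I(s)}}$ on $-\frac{1}{2}<\Re(s)\le 0$ is obtained by realising $\langle\phi,\Eis(f)\rangle$ holomorphically via $\overline{\Eis(f)}=\Eis(\bar f)$ and that $\|f\|_{\I(s)}$ is $s$-independent for the flat family; citing only Remark~\ref{rem:non-unitary} gives integrability but not directly the holomorphic continuation of the inner product.
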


\begin{proof}
    We work as in the proof of Lemma \ref{lem:additive-twist-cuspidal} to reduce to $\alpha=\tfrac{p}{q}$ and $W_f$ to $W^\xi$ where $\xi,p,q$ are as in \eqref{eq:dirichlet}. Then we apply Lemma \ref{lem:voronoi-eis} for $\pi$ Eisenstein to write the sum in the lemma as
    \begin{multline}\label{eq:main-terms}
    |y|^\delta\int_{\Re(s)=0}\left(\sqrt{|y|}q\right)^{-1-2s}H(\tfrac{1}{2}-\delta+s)\frac{\d s}{2\pi i}\\+|y|^\delta\int_{\Re(s)=0}\left(\sqrt{|y|}q\right)^{-1+2s}\frac{\zeta(1-2s)}{\zeta(1+2s)}H(\tfrac{1}{2}-\delta-s)\frac{\d s}{2\pi i}
    \\+O\left(|y|^\eta \int_\R\sum_{f\in\B(\I(i t))}\frac{|\langle \phi,\Eis(f)\rangle_{[G]}|}{\|f\|^2_{\I(it)}}S_{2,d}(\Eis(f))\d t\right),
    \end{multline}
    where
    $$H(z)=H(z;W^\xi,\phi,s):=\sum_{f\in\B(\I(s))}\frac{\langle \phi,\Eis(f)\rangle_{[G]}}{\|f\|^2_{\I(s)}} Z(z,W^\xi).$$
    Working as in the proof of Lemma \ref{lem:spectral-decomposition} we see that the integral-sum in the third summand in \eqref{eq:main-terms} is convergent and consequently, the third summand is $O\left(|y|^\eta S_{\infty,\ell}(\phi)\right)$.

    We now deal with the second summand in \eqref{eq:main-terms}. Using \eqref{eq:functional-eqn-eis} and \eqref{eq:intertwiner-isometry} we write
    \begin{align*}
        \frac{\zeta(1-2s)}{\zeta(1+2s)}H(\tfrac{1}{2}-\delta-s)
        &=\sum_{f\in\B(\I(s))}\frac{\langle \phi,\Eis(M^\ast(s)f)\rangle_{[G]}}{\|M^\ast(s)f\|^2_{\I(-s)}} Z(\tfrac{1}{2}-\delta-s,W^\xi)\\
        &=\sum_{f\in\B(\I(-s))}\frac{\langle \phi,\Eis(f)\rangle_{[G]}}{\|f\|^2_{\I(-s)}} Z(\tfrac{1}{2}-\delta-s,W^\xi)
    \end{align*}
    The last equality follows due to \eqref{eq:whittaker-intertwiner-invariance} and noting that $\{M^\ast(s)f\}_{f\in\B(\I(s)}$ forms an orthogonal basis of $\I(-s)$. Using the above and changing variable $s\mapsto-s$ we see the second summand in \eqref{eq:main-terms} equals
    $$|y|^\delta\int_{\Re(s)=0}\left(\sqrt{|y|}q\right)^{-1-2s}H(\tfrac{1}{2}-\delta+s)\frac{\d s}{2\pi i}$$
    which is the same as the first summand in \eqref{eq:main-terms} on which we focus next.

    We first analytically continue $H(z,..,s)$ in a neighbourhood of $\Re(s)=0$. Note that $\overline{\Eis(f)}=\Eis(\bar{f})$ and $\bar{f}\in\I(-s)$ for $\Re(s)=0$. Thus changing $s$ to $-s$ we give a holomorphic realization of $\langle\phi,\Eis(f)\rangle_{[G]}$. Recalling that $\|f\|_{\I(s)}$ is $s$-independent and meromorphic properties of $\Eis(f)$ we obtain that $\frac{\langle \phi,\Eis(f)\rangle_{[G]}}{\|f\|^2_{\I(s)}}$ is holomorphic in $-\tfrac{1}{2}<\Re(s)\le 0$. On the other hand, recalling holomorphic properties of $Z$ and $W$ we get that $Z(\tfrac{1}{2}-\delta+s)$ is holomorphic in $-\tfrac{1}{2}<\Re(s)\le 0$. Finally, incorporating Remark \ref{rem:non-unitary}, using the bound in Lemma \ref{lem:support-mellin-expansion}, and following the proof of Lemma \ref{lem:spectral-decomposition} we obtain that the sum defining $H(\tfrac{1}{2}-\delta+s)$ converges absolutely in $-\tfrac{1}{2}<\Re(s)\le 0$ and thus defines a holomorphic function in the same region. Moreover, the same proof yields that
    $$H(\tfrac{1}{2}-\delta+s)\ll_{N,\Re(s)}(1+|s|)^{-N}\|\D^\ell\phi\|_\infty,\quad 0\le \Re(s)<\tfrac{1}{2},\quad -\tfrac{1}{2}<\Re(s)\le 0$$
    for some $\ell>0$ depending on $N$.

    Now we shift the contour of the first summand in \eqref{eq:main-terms} to $\Re(s)=-\tfrac{1}{2}+\epsilon$ without crossing any pole. Using the last estimate of $H$ with sufficiently large $N$ we bound the shifted integral by $\ll |y|^{\delta-\epsilon}\|\D^\ell\phi\|_\infty$ completing the proof.
\end{proof}

\begin{proof}[Proof of Theorem \ref{thm:smooth}]
    Without loss of generality we assume $K=1$ and drop the subscripts from $\delta_1\in(0,\tfrac{1}{2}],\alpha_1\in\R,\beta_1\in\C$. Let $\pi$ be either cuspidal or unitary Eisenstein series and $\varphi\in\pi$. Applying absolute convergent Fourier expansion of $\varphi-\varphi_0$, as in \eqref{eq:fourier-expansion}, and bound of $\varphi_0(a(y))$, as in Lemma \ref{lem:constant-term-expression} we write
    $$\int\varphi(n(x)a(y))\d\mu(x) = \sum_{m\neq 0}\frac{\lambda_\pi(|m|}{\sqrt{|m|}}W_\varphi(a(my))\widehat{\mu}(m) + O\left(|y|^{\frac{1}{2}}S_{2,d}(\varphi)\right).$$
    Now employing the expression of $\widehat{\mu}$ as in Theorem \ref{thm:smooth} and applying Lemma \ref{lem:error-term-additive-twist} we deduce that the right hand side above is
    $$\beta\sum_{m\neq 0}\frac{\lambda_\pi(|m|)}{|m|^{\frac{1}{2}+\delta}}W_\varphi(a(my))e(m\alpha) + O\left(|y|^{\eta}S_{2,d}(\varphi)\right).$$
    Starting with the spectral decomposition as in Lemma \ref{lem:spectral-decomposition} then applying Lemma \ref{lem:additive-twist-cuspidal} and Lemma \ref{lem:additive-twist-eisenstein}, and working as in the proof of Lemma \ref{lem:reduction-to-aut-form}, we conclude the proof.
\end{proof}

\subsection{Stationary Phase} 
In this subsection, we prove Corollary \ref{thm: function equi}. We start with the stationary phase estimate of the Fourier transform of $\mu^{w,f}$ as in Corollary \ref{thm: function equi}.

\begin{prop}\label{prop: Stationary phase}
Let $f,w$ and $\mu^{w,f}$ be as in Corollary \ref{thm: function equi}. Let $\{x_i\}_{i=1}^n$ be the set of points in the support of $w$ and let $\{k_i\}_{i=1}^n$ be the multi-set of positive integers so that $f'$ has a zero of order $k_i-1$ at $x_i$. 
    
Then there exist $\{a_{i,j}\}_{1\le i\le n}^{j\in \N}$ only depending on $f$ and $w$ so that for any $N>0$
\begin{equation*}
   \widehat{\mu^{w,f}}(\xi) = \sum_{i=1}^n e\left(\xi f(x_i)\right)\sum_{j=0}^{N-1} a_{i,j} \xi^{-\frac{j+1}{k_i}}+O_{f, w, N}\left(\left(1+|\xi|\right)^{-\frac{N+1}{\max_i{k_i}}}\right), 
\end{equation*}
as $|\xi|\to\infty$.
\end{prop}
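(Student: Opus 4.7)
The plan is to apply a classical stationary phase analysis localized around each critical point of $f$ inside $\supp(w)$. First I would fix a smooth partition of unity $\{\chi_0,\chi_1,\dots,\chi_n\}$ on $\supp(w)$ subordinate to a cover consisting of small disjoint neighbourhoods $U_i$ of each stationary point $x_i\in Z(f')\cap\supp(w)$, together with one open set $U_0$ on which $f'$ does not vanish. This decomposes
\begin{equation*}
\widehat{\mu^{w,f}}(\xi)=\sum_{i=0}^n\int_\R e(\xi f(x))\chi_i(x)w(x)\d x.
\end{equation*}

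On the non-stationary piece, I would use the identity $e(\xi f(x))=(2\pi i\xi f'(x))^{-1}\partial_x e(\xi f(x))$ and integrate by parts $M$ times. Since $1/f'$ and all derivatives of $\chi_0 w$ are smooth and bounded on $\supp(\chi_0)$, this yields a contribution of size $O_M(|\xi|^{-M})$ for any $M\ge 1$, which is absorbed into the stated error term by choosing $M$ sufficiently large.

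The main work is near each stationary point $x_i$. Since $f$ is real analytic and $f'$ vanishes to order exactly $k_i-1$ at $x_i$, I would write locally
\begin{equation*}
f(x)-f(x_i)=(x-x_i)^{k_i} g_i(x),\qquad g_i\text{ real analytic},\ g_i(x_i)\neq 0.
\end{equation*}
Shrinking $U_i$, the sign of $g_i$ is constant, and I would perform the analytic change of variable $u=(x-x_i)|g_i(x)|^{1/k_i}$ when $k_i$ is even, respectively $u=(x-x_i)g_i(x)^{1/k_i}$ using the real $k_i$-th root when $k_i$ is odd. In either case this is a smooth diffeomorphism from a neighbourhood of $x_i$ onto a neighbourhood of $0$, and converts the local integral to
\begin{equation*}
e(\xi f(x_i))\int_\R e(\varepsilon_i\xi u^{k_i})W_i(u)\d u,\qquad \varepsilon_i\in\{\pm 1\},
\end{equation*}
for some compactly supported smooth $W_i$ depending only on $f,w,\chi_i$. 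I would then invoke the standard monomial-phase asymptotic expansion (obtained by Taylor expanding $W_i$ at $0$ and evaluating $\int e(\pm\xi u^{k_i})u^j\rho(u)\d u$ via a scaling $u\mapsto|\xi|^{-1/k_i}u$ combined with contour deformation for the non-Schwartz tail): for every $N\ge 1$,
\begin{equation*}
\int_\R e(\varepsilon_i\xi u^{k_i})W_i(u)\d u=\sum_{j=0}^{N-1} a_{i,j}\xi^{-(j+1)/k_i}+O_{f,w,N}\bigl(|\xi|^{-(N+1)/k_i}\bigr),
\end{equation*}
with coefficients $a_{i,j}$ determined by $W_i^{(j)}(0)$ and hence by $f,w$. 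Summing over $i=1,\dots,n$, absorbing the non-stationary contribution, and using that the slowest decay rate is $|\xi|^{-(N+1)/\max_i k_i}$ yields the claimed expansion, with the implicit constant adjusted for $|\xi|\le 1$ to give $(1+|\xi|)^{-(N+1)/\max_i k_i}$.

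The main obstacle is the construction of the local normal-form substitution at degenerate critical points ($k_i\ge 3$). Real analyticity of $f$ makes this tractable, since $g_i$ has a nonzero analytic $k_i$-th root on a neighbourhood (by the implicit function theorem applied to $z^{k_i}=g_i$ when $k_i$ is odd or $g_i>0$, and with a sign flip otherwise). Everything else reduces to bookkeeping and an appeal to the standard monomial-phase expansion.
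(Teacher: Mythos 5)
Your proof is correct and follows essentially the same route as the paper: a partition of unity subordinate to small neighbourhoods of the critical points of $f$ in $\supp(w)$, non-stationary phase (repeated integration by parts) on the complement, and the classical asymptotic expansion for a single degenerate critical point on each local piece. The only difference is that the paper cites this last step directly as \cite[Propositions 1 and 3, Ch.~VIII]{stein1993harmonic}, whereas you sketch the proof of that proposition (factoring $f(x)-f(x_i)=(x-x_i)^{k_i}g_i(x)$, the analytic normal-form substitution $u$ turning the phase into $\pm u^{k_i}$, then Taylor expansion of the amplitude with a scaling and contour argument); both are valid, and your version just re-derives the cited black box.
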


\begin{proof}
We first discuss how the above proposition can be reduced to a simpler one.
Modifying the support of $w$ and using \cite[Proposition 1]{stein1993harmonic},
without loss of generality, we can assume $i=1$, i.e., there exists a unique stationary point of $f$ in the support of $w$ (equivalently, $x_1=x_2=\cdots= x_{n}$). Also by changing $f$ with $x\mapsto f(x+x_1)-f(x_1)$, we can also assume without loss of generality, that $x_1=0$ and
\begin{equation*}
   f(0)= f'(0)=\cdots=f^{(k-1)}(0)=0,\quad f^{(k)}(0)\neq 0.
\end{equation*}
To prove Proposition \ref{prop: Stationary phase} it is enough to show that for the above $f$ for any $N>0$ we have
\begin{equation*}
    \widehat{\mu^{w,f}}(\xi) = \sum_{j=0}^{N-1} a_{j} \xi^{-\frac{j+1}{k}}+O_{f, w, N}\left(\left(1+|\xi|\right)^{-\frac{N+1}{k}}\right), 
\end{equation*}
for some $a_j\in\C$. The above follows from \cite[Proposition 3]{stein1993harmonic}.
\end{proof}

\begin{proof}[Proof of Corollary \ref{thm: function equi}]
    The proof follows immediately after applying Theorem \ref{thm:smooth} and Proposition \ref{prop: Stationary phase} with $N=\lfloor\tfrac{\max\{k_i\}}{2}\rfloor$.
\end{proof}

\medskip

{\small
\subsection*{Acknowledgements}
A major discussion took place during the ``Analytic Number Theory'' program at the Institute Mittag-Leffler (IML) where we were in residence and we want to thank IML for extraordinary hospitality and work condition. We also thank University of Michigan, Uppsala University, and University of York, where significant parts of this work were completed, for their superb working condition.  We want to thank Asaf Katz, Dmitry Kleinbock, Manuel L{\"u}thi, Andreas Str{\"o}mbergsson, P{\'e}ter Varj{\'u}, and Barak Weiss for helpful comments on an earlier draft of this paper. SD thanks Han Yu for many interesting discussion and Sam Chow for sending the Mathematica code from \cite{CVY24}. We also thank our family for supporting us while writing the paper. Finally, we also acknowledge very helpful comments from the referees. 
}

\bibliographystyle{abbrv}
\bibliography{cantor.bib}

\end{document}